\documentclass[a4paper,11pt,reqno]{amsart}
\usepackage{amsmath}
\usepackage{relsize}
\usepackage[nospace,noadjust]{cite}
\usepackage{color}
\usepackage[dvipsnames]{xcolor}
\usepackage[normalem]{ulem}

\usepackage{hyperref, enumitem}
\hypersetup{
  colorlinks   = true, %Colours links instead of ugly boxes
  urlcolor     = blue, %Colour for external hyperlinks
  linkcolor    = Purple, %Colour of internal links%
  citecolor   = Purple %Colour of citations
}
\usepackage{amsmath,amsthm,amssymb}
\usepackage{cleveref}

\usepackage[margin=2.5cm]{geometry}
\usepackage{caption} 

\usepackage{tikz-cd}
\usepackage{tikz-3dplot}
\usetikzlibrary{calc,fit,matrix,arrows,automata,positioning, shadows}
\usetikzlibrary{decorations.pathreplacing,angles,quotes}
\usepackage{stmaryrd}

\usetikzlibrary{fit}
\tikzset{%
  highlight/.style={rectangle,rounded corners,fill=red!15,draw,
    fill opacity=0.5,thick,inner sep=0pt}
}

\usepackage{array}
\newcolumntype{x}[1]{>{\centering\arraybackslash\hspace{0pt}}p{#1}}

\newcommand{\ras}[1]{\renewcommand{\arraystretch}{#1}}

\theoremstyle{definition}
\newtheorem{theorem}{Theorem}[section]
\newtheorem{definition}[theorem]{{{Definition}}}
\newtheorem{example}[theorem]{{{Example}}}
\newtheorem{notation}[theorem]{{{Notation}}}
\newtheorem{remark}[theorem]{{{Remark}}}

\newtheorem{corollary}[theorem]{{{Corollary}}}%[theorem]
\newtheorem{proposition}[theorem]{{{Proposition}}}
\newtheorem{lemma}[theorem]{{{Lemma}}}

\newcommand{\C}{\mathcal C}
\newcommand{\mC}{\mathcal C}

\newcommand{\mM}{\mathcal{M}}
\newcommand{\mL}{\mathcal{L}}
\newcommand{\mU}{\mathcal{U}}

\newcommand{\mS}{\mathcal{S}}

\newcommand{\mN}{\mathcal{N}}

\newcommand{\mA}{\mathcal{A}}
\newcommand{\F}{\mathbb F}

\newcommand{\la}{\langle}
\newcommand{\ra}{\rangle}
\newcommand{\mB}{\mathcal{B}}
\newcommand{\mP}{\mathcal{P}}

\newcommand{\PP}{\mathbb{P}}
\newcommand{\Proj}{\operatorname{Proj}}

\newcommand{\Fq}{\F_q}

\DeclareMathOperator{\GL}{GL}
\DeclareMathOperator{\supp}{supp}

\DeclareMathOperator{\rk}{rk}

\DeclareMathOperator{\dd}{d}

\DeclareMathOperator{\w}{w}

\DeclareMathOperator{\rowsp}{rowsp}
\DeclareMathOperator{\colsp}{colsp}
\DeclareMathOperator{\sss}{ss}

\usepackage{scrextend}
\allowdisplaybreaks

\title{Representability of $q$-matroids via rank-metric codes}
\usepackage[foot]{amsaddr}
\author[G. N. Alfarano]{Gianira N. Alfarano$^{1}$}
\address{$^1$\textnormal{Université de Rennes, IRMAR, Rennes, France.}}
 \email{gianira-nicoletta.alfarano@univ-rennes.fr}
 \author[S. Degen]{Sebastian Degen$^2$}
\address{$^2$\textnormal{Universit\"{a}t Bielefeld, Fakult\"{a}t f\"{u}r Mathematik, Bielefeld, Germany.}}
 \email{sdegen@math.uni-bielefeld.de}
\begin{document}
\begin{abstract}
Multilinear representability extends classical linear representability of matroids by assigning subspaces, rather than vectors, to ground elements. This notion is closely related to almost affine codes. In this paper, we introduce and study a $q$-analogue of multilinear representability for $q$-matroids, motivated by known connections between $q$-matroids, classical matroids, and rank-metric codes. We define $m$-multilinear representability in terms of almost affine matrix rank-metric codes satisfying a natural divisibility condition. We prove that nontrivial uniform $q$-matroids admit no purely multilinear representations, and we derive necessary conditions for multilinear representations of almost uniform $q$-matroids.  We further show that the non-Pappus $q$-matroid, if multilinearly representable, must have block size at least $9$. Finally, we prove that no rank-$2$ $q$-matroid on $\F_2^4$ admits a purely $m$-multilinear representation for $1<m<4$, and we classify pure multilinearity for all $q$-matroids on $\F_2^3$ and $\F_2^4$ in the corresponding ranges. At present, no example is known of a purely multilinear $q$-matroid.
\end{abstract}

\maketitle

\section{Introduction}

Matroid theory provides an abstract framework for independence, unifying concepts from linear algebra, graph theory, and combinatorics. A central theme in this theory is \emph{representability}: a matroid is representable over a field if its rank function can be realized by the dimensions of spans of vectors over that field. More precisely, let $M=(E,r)$ be a matroid on a finite ground set $E$. $M$ is $\F_q$-\emph{representable} if there exists a matrix $A$ over $\F_q$, whose columns are indexed by $E$, such that
\[
r(X)=\dim_{\F_q}\big( \langle A_e : e\in X\rangle \big)
\quad \text{for all } X\subseteq E,
\]
where $A_e$ denotes the column of $A$ indexed by $e$. This notion has been extensively studied and has led to deep structural and algorithmic results; see, for instance, \cite{oxley2011matroid}.

A broader notion is \emph{multilinear representability}. Instead of assigning a vector to each ground element, one assigns a subspace of fixed dimension. The rank of a set is then recovered as the dimension of the sum of the corresponding subspaces, normalized by the common block dimension. This generalization is closely related to almost affine codes \cite{simonis1998almost} and appears naturally in coding theory, network coding, and polymatroid theory; see also \cite{pendavingh2013skew, kuhne2023staudt, beimel2014multi, kuhne2026entropic, el2010index}. In \cite{el2010index}, the authors showed that linear network coding capacity problems are equivalent to questions about multilinear matroid representability. This connection establishes multilinear representability as a natural combinatorial abstraction of linear vector network coding. Moreover, it was shown in \cite{kuhne2022representability} that deciding whether a given network coding instance admits a linear vector coding solution is undecidable.

We recall the definition of multilinear representability for classical matroids. Let $m\ge 1$ be an integer.

\begin{definition}
A matroid $M=(E,r)$ is said to be \emph{$m$-multilinear over $\F_q$} if there exists a finite-dimensional vector space $V$ over $\F_q$ and, for each element $e\in E$, a subspace $V_e\subseteq V$ such that
\[
r(X)=\frac{1}{m}\dim_{\F_q}\left( \sum_{e\in X} V_e \right)
\quad \text{for all } X\subseteq E.
\]
\end{definition}

The case $m=1$ recovers ordinary linear representability, after identifying nonzero vectors with the one-dimensional subspaces they span. For $m>1$, one obtains a strictly larger class of matroids. Intuitively, instead of assigning a single vector to each element, one assigns an $m$-dimensional ``block'' subspace, and independence corresponds to direct-sum behavior of these subspaces.

Multilinear matroids are closely related to \emph{almost affine codes} endowed with the Hamming metric. Let $\mC\subseteq \F_q^n$ be a not necessarily linear Hamming-metric code. For $X\subseteq \{1,\dots,n\}$, denote by $\mC_X$ the projection of $\mC$ onto the coordinates in $X$.

\begin{definition}\label{def:alm_affine_classical}
Let $\mC$ be an $\F_q$-linear subspace of $(\Fq^m)^n$. Then $\C$ is an \emph{almost affine code} of length $n$ if and only if, for every subset
$X\subseteq \{1,\dots,n\}$, the dimension $\dim_{\F_q}(\C_X)$ is divisible
by~$m$.
\end{definition}

In this case, the function
\[
r(X):=\frac{1}{m}\log_q |\mC_X|
\]
is integer-valued and defines the rank function of an $m$-multilinear matroid on $\{1,\dots,n\}$; see \cite[Example 2]{simonis1998almost}. If $\mC$ is a linear code over $\F_q$, then for every $X \subseteq \{1,\dots,n\}$ one has
\[
|\mC_X| = q^{\dim_{\F_q}(\mC_X)},
\]
so every linear code is almost affine. In this case, the associated matroid is $\F_q$-representable. Thus, almost affine codes form a wider class than linear codes: they need not carry any linear or additive structure, but they still determine matroids through the cardinalities of their coordinate projections.

A classical example in this context is the \emph{non-Pappus matroid}. It is a rank-$3$ matroid on the ground set $E=\{1,\dots,9\}$ whose $3$-element independent sets are precisely those not contained in one of the lines of the non-Pappus configuration shown in Figure~\ref{fig:non-Pappus}.

\begin{figure}[h!]
\centering
\begin{tikzpicture}[
    scale=1.5,
    point/.style={circle, fill=black, inner sep=2pt}
]

% Points
\node[point,label=above:$1$] (p1) at (0,2) {};
\node[point,label=above:$2$] (p2) at (2,2) {};
\node[point,label=above:$3$] (p3) at (4,2) {};

\node[point,label=below:$4$] (p4) at (0,0) {};
\node[point,label=below:$5$] (p5) at (2,0) {};
\node[point,label=below:$6$] (p6) at (4,0) {};

\node[point,label=left:$7$]  (p7) at (1,1) {};
\node[point,label=above:$8$] (p8) at (2,1) {};
\node[point,label=right:$9$] (p9) at (3,1) {};

% Lines
\draw (p1) -- (p2) -- (p3);
\draw (p4) -- (p5) -- (p6);

\draw (p1) -- (p5) -- (p7);
\draw (p2) -- (p4) -- (p7);

\draw (p3) -- (p5) -- (p9);
\draw (p2) -- (p6) -- (p9);

\draw (p1) -- (p6) -- (p8);
\draw (p3) -- (p4) -- (p8);

\end{tikzpicture}
\caption{Non-Pappus configuration.}
\label{fig:non-Pappus}
\end{figure}

A multilinear representation of this matroid can be constructed as follows. Let $F=(\F_3)^2$, so that the block size is $m=2$, and consider the code $\mC \subseteq F^9$ defined as the $\F_3$-row space of the matrix
\[
\begin{pmatrix}
10 & 10 & 00 & 10 & 00 & 10 & 10 & 10 & 00 \\
01 & 01 & 00 & 01 & 00 & 01 & 01 & 01 & 00 \\
00 & 00 & 00 & 10 & 10 & 21 & 01 & 10 & 10 \\
00 & 00 & 00 & 02 & 01 & 20 & 12 & 02 & 01 \\
00 & 10 & 10 & 01 & 00 & 01 & 00 & 11 & 10 \\
00 & 01 & 01 & 21 & 00 & 21 & 00 & 10 & 01
\end{pmatrix},
\]
where each entry $ab$ denotes the vector $(a,b) \in (\F_3)^2$.

By direct computation, for every subset of coordinates
$X \subseteq \{1,\dots,9\}$, the projection $\mC_X$ has $\F_3$-dimension
divisible by $2$. Equivalently, there exists an integer $r(X)$ such that
\[
|\mC_X| = 3^{2r(X)}.
\]
Thus
\[
r(X)=\frac{1}{2}\dim_{\F_3}(\mC_X)
\]
is integer-valued. The matroid associated with this rank function is the
non-Pappus matroid. In particular, $\mC$ is almost affine and $2$-multilinear, and it yields an explicit example of a $2$-multilinear matroid that is not representable over any field; see \cite{simonis1998almost, oxley2011matroid}.

In recent years, the notions of matroids and polymatroids have been extended to their $q$-analogues, particularly in connection with rank-metric codes; see \cite{jurrius2018defining, gluesing2022q, byrne2024weighted, bcj2022} and the references therein. A $q$-matroid consists of the lattice of subspaces of $\F_q^n$ together with an integer-valued rank function satisfying natural analogues of the usual matroid rank axioms.

In this paper, we investigate a $q$-analogue of multilinear representability. Our approach is motivated by the connection between $q$-matroids and classical matroids established in \cite[Theorem~4.4]{gluesing2022q}. This result associates an ordinary matroid to a $q$-matroid after choosing a basis of the ambient vector space, and it has been used to transfer non-representability results from classical matroids to $q$-matroids. In particular, it applies to the $q$-analogue of the non-Pappus matroid. Since the classical non-Pappus matroid is multilinearly representable, it is natural to ask whether its $q$-analogue admits a corresponding multilinear representation.

We define $m$-multilinear representability for $q$-matroids in terms of $\F_q$-linear matrix rank-metric codes in $\F_q^{n\times m}$. More precisely, we say that a $q$-matroid is $m$-multilinearly representable if it is $\F_q^{n\times m}$-representable, that is, if it is represented by an $\F_q$-linear space of $n\times m$ matrices; see Definition~\ref{def:representability}. This notion generalizes $\F_{q^m}$-representability, where one represents a $q$-matroid by an $\F_{q^m}$-linear rank-metric code in $\F_{q^m}^n$. Hence every $\F_{q^m}$-representable $q$-matroid is $m$-multilinear. We are especially interested in the \emph{purely multilinear} case, namely $q$-matroids that are $\F_q^{n\times m}$-representable but not $\F_{q^m}$-representable. At present, no example of such a $q$-matroid is known.

Our first contribution is a tensorial description of the rank function of a representable $q$-polymatroid. Given a matrix rank-metric code and a generator tensor for it, we express the associated $q$-rank function in terms of slice spaces of this tensor. This description allows us to relate representability of $q$-polymatroids to representability of their projectivizations. In particular, we show that the projectivization of an $m$-multilinear $q$-matroid is an $m$-multilinear classical matroid; see Section \ref{sec:tensor_proj}.
We then investigate pure multilinearity for several families of $q$-matroids. We prove that nontrivial uniform $q$-matroids admit no purely multilinear representations, and we obtain necessary conditions for multilinear representations of almost uniform $q$-matroids; see Sections~\ref{subsec: uniform} and~\ref{subsec: almost_uniform}. We also study the $q$-analogue of the non-Pappus matroid. In this case, we show that if such a $q$-matroid admits an $m$-multilinear representation, then necessarily $m \geq 9$; see Section~\ref{sec: q_non_pappus}. Moreover, we prove that no rank-$2$ $q$-matroid over $\F_2^4$ admits a purely $m$-multilinear representation for $1<m<4$; see Section~\ref{sec: rank2_classification}. Combining this with the rank-one case and duality, we obtain a classification of pure multilinearity for all $q$-matroids on $\F_2^3$ and $\F_2^4$ in the corresponding parameter ranges. We conclude in Section~\ref{sec: conclusion} with open questions and directions for future work.
%%%%%%%%%%%%%%%%%%%%%%%%%%%%%%%%%%%%%%%%%%%%%%%%%%%%%%%%%%%%%%%%%%%

\section{\emph{q}-polymatroids and rank-metric codes}\label{sec:preliminaries}

In this section, we explain the relation between rank-metric codes and $q$-polymatroids. Throughout the paper, we use the following notation.

\begin{notation}\label{not:standing_notation}
Let $q$ be a prime power and let $\F_q$ be the finite field with $q$ elements. Let $n\geq 2$ be a positive integer and set $E:=\F_q^n$. For a space $X$, we denote by $\mL(X)$ the collection of subspaces of $X$. For $0\leq i\leq \dim(X)$, we denote by $\mL(X)_{\leq i}$ and $\mL(X)_{\geq i}$ the collections of all subspaces of~$X$ of dimension at most $i$ and at least $i$, respectively, while we denote by $\mL(X)_{i}$ the collection of all $i$-dimensional subspaces of $X$. 
We write $X\subseteq Y$ to indicate that $X$ is a subspace of $Y$. If a subspace is to be understood as one-dimensional, we represent it by a lowercase letter; for instance, $x \subseteq X$ means that $x$ is a one-dimensional subspace of~$X$. Finally, for an integer $N\in\mathbb{N}$, we write $[N]:=\{1,\ldots,N\}$. Throughout the paper $k$ and $m$ always denotes positive integers.
\end{notation}

\subsection{\emph{q}-Polymatroids}

The first results on $q$-polymatroids and their relation to coding theory can be found in \cite{gorla2019rank} and~\cite{shiromoto2019codes}. The following definition is from \cite[Def.~4.1]{gorla2019rank}.

\begin{definition}\label{def:rank_axioms}
A function $\rho: \mL(E)\longrightarrow \mathbb{R}$ is a \textbf{$q$-rank function} if it satisfies the following axioms.
\begin{itemize}
	\item[(R1)] \emph{Boundedness}: $0\leq \rho(A) \leq \dim(A)$, for all $A \in \mL(E)$.
	\item[(R2)] \emph{Monotonicity}: $A\leq B \Rightarrow \rho(A)\leq \rho(B)$,  for all $A,B \in \mL(E)$.
	\item[(R3)] \emph{Submodularity}: $\rho(A \cap B)+\rho(A+B)\leq \rho(A) +\rho(B)$, for all $A,B \in \mL(E)$.
\end{itemize}
A \textbf{$q$-polymatroid} is a pair $\mM=(\mL(E),\rho)$ for which $\rho$ is a $q$-rank function.
\end{definition}
The value $\rho(E)$ is called the \textbf{rank of $\mM$} and it is also denoted by $\rho(\mM)$. If $\rho(V)\in\mathbb{Z}$ for every $V\in\mL(E)$, then we say that $\mM$ is a \textbf{$q$-matroid}.

\begin{definition}
Let $E_1,E_2$ be two $n$-dimensional vector spaces over $\F_q$.
Two $q$-polymatroids $\mM_i = (\mL(E_i), \rho_i)$, $i = 1, 2$, are \textbf{scaling-equivalent} if there exists an $\F_q$-isomorphism $\alpha\in\mathrm{Hom}(E_1,E_2)$ and $a \in \mathbb{Q}_{>0}$ such that 
$\rho_2(\alpha(V)) = a\rho_1(V)$ for all $V\in\mL(E_1)$. In this case, we write $\mM_1\cong\mM_2$.
\end{definition}

We also have a notion of \emph{duality} for $q$-polymatroids.

\begin{definition}\label{def: qPM_dual}

    Let $\mM = (\mL(E), \rho)$ be a $q$-polymatroid and set 
    \begin{equation}
        \rho^*(V) = \dim(V) + \rho(V^{\perp}) - \rho(E).
    \end{equation}
    Then $\rho^*$ is a $q$-rank function on $E$ and $\mM^* = (\mL(E), \rho^*)$ is a $q$-polymatroid. It is called the \textbf{dual} of $\mM$.
\end{definition}

Since $\rho(0)=0$ by (R1), the rank of the dual is  $\rho^*(\mM^*)=n-\rho(\mM)$. Moreover, we naturally have $(\mM^*)^*=\mM$.

If $\mM$ is a $q$-matroid, then a subspace $A\in\mL(E)$ is called \textbf{independent} if $\rho(A)=\dim(A)$, and \textbf{dependent} otherwise. A dimension-maximal independent space is called a \textbf{basis} of $\mM$. We say that $C \in \mL(E)$ is a {\bf circuit} of $\mM$ if it is itself a dependent space and every proper subspace of $C$ is independent.
Finally, we say that $X\in\mL(E)$ is a \textbf{flat} of $\mM$ if $\rho(X)<\rho(X+ v)$ for all one-dimensional spaces $ v\in\mL(E)_1\setminus \mL(X)_1$ and an inclusion-maximal proper flat is called a \textbf{hyperplane}. These objects are used to provide cryptomorphic characterizations of $q$-matroids; see e.g. \cite{bcj2022, cj2023, jurrius2018defining, alfarano2024cyclic}. In the more general case of $q$-polymatroids, however, most of them fail; see \cite{gluesing2022independent}.

A $q$-matroid $\mM$ is called \textbf{paving} if all its circuits $C$ satisfy $\dim(C)\geq \rho(\mM)$. There exists an explicit construction of paving $q$-matroids, which was illustrated in \cite{gluesing2022q}.

\begin{proposition}(\cite[Proposition~4.6]{gluesing2022q})\label{prop: paving_construction}
    Let $1\leq k\leq n-1$ and let $\mathcal{S}\subseteq\mL(E)_{k}$ be such that
    $\dim(V\cap W)\leq k-2$ for every two distinct $V,W\in\mathcal{S}$. Define the map
    \[        \rho_\mS:\mL(E)\rightarrow\mathbb{Z}_{\geq 0},\quad V\mapsto
        \begin{cases}
            \;\hfil k-1\;&\text{if }V\in\mathcal{S},\\
            \;\hfil \min\{\dim V,k\}\;&\text{otherwise}. 
        \end{cases}
    \]
    Then $\mM_\mS=(\mL(E),\rho_\mS)$ is a paving $q$-matroid of rank $k$. Its circuits of rank $k-1$ are precisely the subspaces in $\mathcal{S}$. We call $\mM_\mS$ the \textbf{paving $q$-matroid induced by $\mS$}.
\end{proposition}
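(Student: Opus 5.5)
We check the three axioms (R1)--(R3) for $\rho_\mS$ directly. Then we identify the circuits and verify the paving condition. Throughout, write $f(V):=\min\{\dim V,k\}$, which is the rank function of the uniform $q$-matroid $\mU_{k,n}$. Note that $\rho_\mS = f$ except on $\mS$, where it is smaller by exactly one.

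\emph{Boundedness and monotonicity.} (R1) is immediate, since $0\le k-1 = \dim V-1$ for $V\in\mS$ and $0\le f(V)\le\dim V$ otherwise. For (R2), let $A\subseteq B$. If neither space lies in $\mS$, this is monotonicity of $f$. If $B\in\mS$ and $A\neq B$, then $\dim A\le k-1$, so $\rho_\mS(A)=\dim A\le k-1=\rho_\mS(B)$. If $A\in\mS$ and $A\ne B$, then $\rho_\mS(A)=k-1< k=\rho_\mS(B)$, since $\dim B>k$.

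\emph{Submodularity.} This is the main step. Since $f$ is submodular and $\rho_\mS\le f$, the inequality $\rho_\mS(A\cap B)+\rho_\mS(A+B)\le\rho_\mS(A)+\rho_\mS(B)$ can only fail when $A$ or $B$ lies in $\mS$ and the right-hand side drops. Assume $A=B$ is trivial, and assume without loss of generality that $A\in\mS$.
\begin{itemize}
\item If $B\subseteq A$ or $A\subseteq B$, the inequality becomes an equality. For instance, if $A\subseteq B$ we have $A\cap B=A$ and $A+B=B$.
\item Otherwise $A+B\supsetneq A$, so $\dim(A+B)\ge k+1$ and $A+B\notin\mS$. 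Hence $\rho_\mS(A+B)=k$, and we need $\rho_\mS(A\cap B)+1\le\rho_\mS(B)$.
  \begin{itemize}
  \item If $\dim B\ge k+1$, then $\rho_\mS(B)=k$ and $\rho_\mS(A\cap B)\le\dim(A\cap B)\le k-1$, so the inequality holds.
  \item If $B\in\mS$, the hypothesis gives $\dim(A\cap B)\le k-2$, so the left side is at most $k-1=\rho_\mS(B)$. This is exactly where the intersection condition is used.
  \item If $\dim B\le k$ and $B\notin\mS$, then $\rho_\mS(B)=\dim B$. Since $B\not\subseteq A$, we get $\dim(A\cap B)\le\dim B-1$, and $A\cap B\notin\mS$ by dimension.
  \end{itemize}
\end{itemize}
Integrality of $\rho_\mS$ is clear, so $\mM_\mS$ is a $q$-matroid. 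Its rank is $\rho_\mS(E)=k$, because $n\ge k+1$ means $E\notin\mS$.

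\emph{Circuits and paving.} A space $V$ is dependent exactly when $V\in\mS$ or $\dim V\ge k+1$.
\begin{itemize}
\item Each $V\in\mS$ is a circuit: its proper subspaces have dimension less than $k$ and are not in $\mS$, so they are independent.
\item Any other circuit $C$ must have $\dim C\ge k+1$. In fact $\dim C=k+1$, because a larger dependent space contains a $(k+1)$-dimensional, hence dependent, subspace.
\end{itemize}
Hence every circuit has dimension at least $k=\rho(\mM_\mS)$, so $\mM_\mS$ is paving. The circuits of rank $k-1$ are exactly the members of $\mS$, since a $(k+1)$-dimensional circuit has rank $k$.
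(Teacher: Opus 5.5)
Your proof is correct and complete. The paper gives no argument for this statement; it only quotes \cite[Proposition~4.6]{gluesing2022q}, so there is no in-paper route to compare against, and your direct verification works as a self-contained proof. Each step of your case analysis checks out:
\begin{itemize}
\item The reduction of (R3) to the case $A\in\mS$ is valid: if neither space lies in $\mS$, then $\rho_\mS\le f$ and submodularity of $f$ give the inequality.
\item The split into comparable and incomparable pairs, followed by the three-way split on $B$, is exhaustive.
\item The intersection hypothesis is used exactly once, for distinct $A,B\in\mS$, which is where it must be used.
\item Your description of the dependent spaces, namely those in $\mS$ and those of dimension at least $k+1$, immediately gives the circuit and paving claims, including that the circuits of rank $k-1$ are exactly the members of $\mS$.
\end{itemize}

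The only ingredient you take on trust is the submodularity of $f(V)=\min\{\dim V,k\}$. Example~\ref{ex: uniform_qmat} also regards $\mU_{k,n}(q)$ as $\mM_\emptyset$, so it is cleaner to justify this directly and avoid any appearance of circularity. Label so that $\dim A\le\dim B$, and write $c=\dim(A\cap B)$, $a=\dim A$, $b=\dim B$, $d=\dim(A+B)$. Then $c\le a\le b\le d$, and $a-c=d-b$ by the dimension formula. Since $x\mapsto\min\{x,k\}$ is concave and $c\le b$, its increments over the intervals $[c,a]$ and $[b,d]$ of equal length satisfy $f(A)-f(A\cap B)\ge f(A+B)-f(B)$, which is the required inequality.
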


Note that not all paving $q$-matroids arise from the construction in \Cref{prop: paving_construction}. A special example of paving $q$-matroids constructed as in \Cref{prop: paving_construction} is given below.

\begin{example}\label{ex: uniform_qmat}
Let $0\leq k\leq n$. For each $V\in\mL(E)$, define
$\rho(V):=\min\{k,\dim(V)\}$. Then $(\mL(E),\rho)$ is a $q$-matroid.
It is called the \textbf{uniform $q$-matroid on $E$ of rank $k$} and is denoted by $\mU_{k,n}(q)$. It is not difficult to see that, for $1\leq k\leq n-1$, we can regard $\mU_{k,n}(q)$ as $\mM_\emptyset$, i.e., the paving $q$-matroid induced by $\emptyset$.
\end{example}

%%%%%%%%%%%%%%%%%%%%%%%%%%%%%%%%%%%%%%%%%%%%%%%%%%%%%%%%%%%%%%%%%%%

\subsection{Representable \emph{q}-polymatroids}
In this section, we study $q$-polymatroids arising from rank-metric codes; see \cite{gorla2019rank, shiromoto2019codes, gluesing2022q}.
We start by briefly recalling some basic notions on rank-metric codes; for a more detailed treatment, we refer the interested reader to \cite{de78,gabidulin1985theory,gorla2021rank}.
For this purpose, we endow the space of matrices $\F_{q}^{n \times m}$ with the \textbf{rank distance}, defined by $\mathrm{d}(A,B):=\rk(A-B)$, for all $A,B\in\F_{q}^{n \times m}$.

\begin{definition}\label{def:dual}
 We say that $\mC \subseteq \F_q^{n \times m}$ is an {\bf $\F_q$-linear rank-metric code} or a {\bf matrix rank-metric code} if $\mC$ is an $\F_q$-subspace of $\F_{q}^{n \times m}$. Its \textbf{minimum distance} is
 $$\mathrm{d}(\C):=\min\{\rk(M) : M \in \mC, \; M \neq 0\}.$$ We say that $\mC$ is an $\F_q$-$[n \times m, k,d]$ (rank-metric) code if it has $\F_q$-dimension $k$ and minimum distance $d$. 
The \textbf{dual code} of $\mC$ is defined to be $\mC^{\perp}=\{M\in\F_q^{n\times m}: \mathrm{Tr}(MN^\top)=0 \textnormal{ for all } N\in~\mC\}$.
\end{definition} 

We say that two rank-metric codes $\mC_1,\mC_2\subseteq \F_q^{n\times m}$ are \textbf{equivalent} if there exist $(A,B)\in\GL_n(\F_q)\times \GL_m(\F_q)$ such that $\mC_2 = A \mC_1 B$. If $m=n$, then transposition also preserves rank.

The parameters $n,m,k,d$ of a rank-metric code are related by the following inequality known as Singleton bound (see e.g. \cite{de78}):
\begin{equation}\label{eq:Singleton}
    k \leq \max\{n,m\}(\min\{n,m\}-d+1).
\end{equation}
Codes whose parameters meet the Singleton bound with equality are called \textbf{maximum rank distance} codes or \textbf{MRD} codes for short.

It is known that an $\F_q$-linear rank-metric code in $\F_q^{n\times m}$ induces a $q$-polymatroid on $E=\F_q^n$; see \cite{gorla2019rank, shiromoto2019codes}. One way to describe this correspondence is as follows.
\begin{definition}\label{def:codepoly}
Let $\mC$ be an $\F_q$-$[n\times m,k]$ code.
For each subspace $U\in\mL(E)$, we define
$$\mC(U):=\{M \in \C : \colsp_{\F_q}(M) \subseteq U\}.$$ 
It is easy to see that for every $U\in\mL(E)$, $\mC(U)$ is an $\F_q$-subspace of $\mC$, since for every $M,N\in\F_q^{n\times m}$, we have $\colsp_{\F_q}(M+N)\subseteq \colsp_{\F_q}(M) + \colsp_{\F_q}(N)$.
Let 
$$\rho_\mC: \mL (E) \longrightarrow \mathbb{Q}_{\geq 0}, \quad U\longmapsto\frac{k-\dim(\mC(U^\perp))}{m},$$
where $U^\perp$ denotes the orthogonal complement of $U$ with respect to the standard inner product.\footnote{With a slight abuse of notation, we use the symbol ${}^\perp$ both for orthogonal complements of subspaces, with respect to the standard inner product, and for duals of matrix rank-metric codes with respect to the symmetric nondegenerate bilinear form $(M,N)\mapsto \operatorname{Tr}(MN^\top)$.}
Then $(\mL(E),\rho_\mC)$ is a $q$-polymatroid \cite[Theorem 5.3]{gorla2019rank} and we denote it by $\mM[\mC]$.	
\end{definition}

Let $\mC \subseteq \F_q^{n \times m}$ be a rank-metric code and let
$A \in \GL_n(\F_q)$. Let $I \subseteq [n]$ satisfy $0 < |I| < n$. For a matrix $M\in \F_q^{n\times m}$, we denote by $M_I$
the matrix obtained from $M$ by selecting the rows indexed by $I$. We define the \textbf{row-restricted} and \textbf{row-shortened} subcodes of $\mC$ with respect to $A$ and $I$ by
\begin{equation*}
    \Pi_r(\mC, A, I) := \{ (AM)_I : M \in \mC \}, \qquad
    \Sigma_r(\mC, A, I) :=
    \{ (AM)_I : M \in \mC,\; (AM)_{[n]\setminus I} = 0 \}.
\end{equation*}
Similarly, one can define the column-restricted and column-shortened subcodes of $\mC$ with respect to a matrix $B\in\GL_m(\F_q)$ and a set of column indices  $J\subseteq [m]$.

\begin{remark}\label{rem:shortening&C(U)}
Let $\mC \subseteq \F_q^{n \times m}$ be a rank-metric code. The notions of shortened subcode of $\mC$ and of $\mC(U)$ are related. Indeed, let $I \subseteq [n]$ be a set of row indices and let $A \in \GL_n(\F_q)$ be an invertible matrix. Let $\{e_1, \dots, e_n\}$ be the standard basis of $\F_q^n$ and define $V_I = \langle e_i : i \in I\rangle$. Set $U = A^{-1}V_I$, so that $\dim(U)=|I|$.
Then $\Sigma_r(\mC,A,I)\cong \C(U)$. Indeed, for $M\in\mC$, the condition $(AM)_{[n]\setminus I}=0$ is equivalent to
$\operatorname{colsp}(AM)\subseteq V_I$, which is equivalent to
$\operatorname{colsp}(M)\subseteq A^{-1}V_I=U$. Hence the matrices used in the shortening are precisely the elements of $\C(U)$.
\end{remark}

The following result is immediate, but essential for what follows. 

\begin{lemma}
    Let $\mC \subseteq \F_q^{n \times m}$ be a rank-metric code. Let $A \in \GL(n, q)$ and $I \subseteq [n]$. Let $V_{\bar{I}} = \langle e_i \; : \; i\in [n]\setminus I\rangle$ and let $U = A^{-1}V_{\bar{I}}$. Then, the map
    $$\phi^r_{A,I}: \mC \to \Pi_r(\mC, A, I), \quad M \mapsto (AM)_I$$
    is a surjective linear map whose kernel is $\mC(U)$.
\end{lemma}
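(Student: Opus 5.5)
Linearity and surjectivity come directly from the definitions, and the real content is the identification of the kernel, which I would reduce to the same observation already made in \Cref{rem:shortening&C(U)}.

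\emph{Linearity and surjectivity.} The map $M\mapsto AM$ is $\F_q$-linear on $\F_q^{n\times m}$. Selecting the rows indexed by $I$ is also a linear projection. Hence $\phi^r_{A,I}$ is an $\F_q$-linear map on $\mC$. Its image is, by definition, $\{(AM)_I : M\in\mC\}=\Pi_r(\mC,A,I)$, so surjectivity holds tautologically once the codomain is taken to be $\Pi_r(\mC,A,I)$.

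\emph{The kernel.} I would argue exactly as in \Cref{rem:shortening&C(U)}, with the roles of $I$ and its complement exchanged. For $M\in\mC$:
\begin{itemize}
\item $\phi^r_{A,I}(M)=0$ means that every row of $AM$ indexed by $I$ vanishes.
\item This holds if and only if every column of $AM$ has zero entries in the positions $i\in I$.
\item Equivalently, every column of $AM$ lies in $V_{\bar I}=\langle e_i : i\in[n]\setminus I\rangle$, that is, $\colsp(AM)\subseteq V_{\bar I}$.
\end{itemize}
Since $\colsp(AM)=A\,\colsp(M)$ and $A$ is invertible, this last condition is equivalent to $\colsp(M)\subseteq A^{-1}V_{\bar I}=U$. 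This is exactly the condition $M\in\mC(U)$ from \Cref{def:codepoly}, so $\ker\phi^r_{A,I}=\mC(U)$.

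The only point needing care is the bookkeeping: the vanishing rows indexed by $I$ force the column space into the span of the complementary coordinates $V_{\bar I}$, not into $V_I$. As a consequence of the lemma, $\dim\Pi_r(\mC,A,I)=k-\dim\mC(U)$, which ties restrictions to the rank function $\rho_\mC$.
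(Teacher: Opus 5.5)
Your proof is correct and follows essentially the same route as the paper: linearity and surjectivity are immediate from the definitions, and the kernel is identified by translating the vanishing of the $I$-indexed rows of $AM$ into $\colsp(AM)\subseteq V_{\bar I}$, equivalently $\colsp(M)\subseteq A^{-1}V_{\bar I}=U$, i.e.\ $M\in\mC(U)$. The paper phrases this elementwise, for vectors $v\in\colsp(M)$, rather than via $\colsp(AM)=A\,\colsp(M)$, but the argument is the same.
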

\begin{proof}
    The map $\phi^r_{A,I}$ is clearly linear and surjective. Moreover, $M \in \C$ lies in $\ker(\phi^r_{A,I})$ if and only if $\phi^r_{A,I}(M) = 0$. This is equivalent to the fact that the matrix $AM$ has zeros in all rows indexed by $I$. Consequently, the non-zero rows of $AM$ can occur only in positions indexed by $[n]\setminus I$. Let $v \in \colsp_{\F_q}(M)$. Then $Av$ is a linear combination of the columns of $AM$. Because every row of $AM$ indexed by $I$ is zero, the vector $Av$ can have nonzero coordinates only  in positions indexed by $[n]\setminus I$. In other words, $Av \in \langle e_i \; : \; i \in [n]\setminus I  \rangle = V_{\bar{I}}$.
    Hence, $v \in A^{-1}V_{\bar{I}} = U$.
    This shows that $\colsp_{\F_q}(M) \subseteq U$. Conversely, if $\colsp_{\F_q}(M) \subseteq U$, then for every column $c$ of $M$, $Ac \in V_{\bar{I}}$, which implies that the rows indexed by $I$ are zero. Thus, $\ker(\phi^r_{A,I}) = \{ M \in \mC : \colsp_{\F_q}(M) \subseteq U \} = \C(U)$.
\end{proof}

As an immediate consequence, let $U\subseteq \F_q^n$ be a subspace of
dimension $u$, and let $A\in \GL(n,q)$ be a change-of-basis matrix that
maps $U$ to the last $u$ standard basis vectors of $\F_q^n$, that is,
$AU=V_{\bar I}$, where $I=\{1,\dots,n-u\}$. Then
\begin{equation}\label{eq_first_iso}
\C/\C(U) \cong \Pi_r(\mC, A, I).
\end{equation}

We now recall the \emph{vector} version of rank-metric codes. Fix an $\F_q$-basis $\Gamma=\{\gamma_1,\ldots,\gamma_m\}$ of $\F_{q^m}$ over $\F_q$. For $x=(x_1,\ldots,x_n)\in\F_{q^m}^n$, let $\Gamma(x)\in\F_q^{n\times m}$ be the matrix obtained by expanding the coordinates of $x$ with respect to $\Gamma$, namely, if $x_j=\sum_{i=1}^m x_{j,i}\gamma_i$ for each $j\in[n]$, then $\Gamma(x)=(x_{j,i})_{j,i}$. We define the \textbf{rank support} of $x$ as $\supp(x):=\colsp_{\F_q}(\Gamma(x))\subseteq \F_q^n$. This definition is independent of the choice of the basis $\Gamma$.

The rank distance on $\F_{q^m}^n$ is defined by $\dd(x,y):=\rk(\Gamma(x)-\Gamma(y))$. This definition is also independent of the choice of
$\Gamma$.

\begin{definition}
An \textbf{$\F_{q^m}$-linear rank-metric code}, or a \textbf{vector rank-metric code}, is an $\F_{q^m}$-linear subspace $\mC\subseteq \F_{q^m}^n$, endowed with the rank distance. If $\dim_{\F_{q^m}}(\mC)=K$ and the minimum rank distance of $\mC$ is $d$, then we say that $\mC$ is an $\F_{q^m}$-$[n,K,d]$ rank-metric code.
\end{definition}

Thus, after fixing $\Gamma$, we may identify $\F_{q^m}^n$ with $\F_q^{n\times m}$. Every $\F_{q^m}$-linear rank-metric code can therefore be viewed as an $\F_q$-linear matrix rank-metric code. Such codes give rise to $q$-matroids as follows.

\begin{definition}\label{def:vector_codes}
Let $\mC$ be a $K$-dimensional $\F_{q^m}$-linear subspace of $\F_{q^m}^n$. For every $W \subseteq \Fq^n$, we define
$\mC(W):=\{x \in \mC : \supp(x) \subseteq W\}$.
Let $\rho: \mL(E) \longrightarrow \mathbb{Z}_{\geq 0}$ be defined by
$$\rho(W):=K-\dim_{\F_{q^m}}(\C(W^\perp)).$$
Then $(\mL(E),\rho)$ is a $q$-matroid \cite[Theorem~24]{jurrius2018defining}, and we also denote it by $\mM[\C]$. Note that in this case, $K$ is the $\F_{q^m}$-dimension of~$\mC$.
\end{definition}

 We have the following notions of representability, which extend \cite[Definition~4.1]{gluesing2022q}. 

\begin{definition}\label{def:representability}
Let $E$ be an $n$-dimensional $\F_q$-vector space and let $\mM = (\mL(E),\rho)$ be a $q$-polymatroid.
\begin{enumerate}
    \item $\mM$ is said to be \textbf{$\F_q^{n\times m}$-representable} if there exists a rank-metric code $\mC \subseteq \F_q^{n \times m}$ such that $\mM = \mM[\C]$.
    
    \item Suppose $\mM$ is a $q$-matroid. Then $\mM$ is said to be \textbf{$\F_{q^m}$-representable} if there exists an $\F_{q^m}$-linear rank-metric code $\mC \subseteq \F_{q^m}^{n}$ such that $\mM = \mM[\C]$. 
    \item Suppose $\mM$ is a $q$-matroid. If $\mM$ is  {$\F_q^{n\times m}$-representable} for some  $m>1$, then we say that $\mM$ is $m$-\textbf{multilinear} (or simply \textbf{multilinear} if the $m$ is clear from the context).
\end{enumerate}
\end{definition}

\begin{remark}\label{rem: multilin_case_m=1}
For $m=1$, the notion of $\F_q^{n\times 1}$-representability coincides, under the natural identification $\F_q^{n\times 1}\cong \F_q^n$, with $\F_q$-representability in the sense of \Cref{def:representability}(2). Therefore, we do not use the term $1$-multilinearity.
\end{remark}

\begin{example}\label{ex: uniform_representability}
The \emph{trivial} uniform $q$-matroid $\mU_{0,n}(q)$ and the \emph{free} uniform $q$-matroid $\mU_{n,n}(q)$ are
$\F_q$-representable. In particular, $\mU_{0,n}(q)$ is represented by the $1 \times n$ zero matrix and $\mU_{n,n}(q)$ by
the $n\times n$ identity matrix. For $0 < k < n$, the uniform $q$-matroid $\mU_{k,n}(q)$ is $\F_{q^m}$-representable if and only if
$m \geq n$. Indeed, a matrix $G\in\F_{q^m}^{k\times n}$ represents $\mU_{k,n}(q)$ if and only if $\rk(GY^\top) = k$ for all $Y \in \F_q^{k\times n}$ of rank $k$.
But this is equivalent to $G$ generating an MRD $\F_{q^m}$-linear code and such a matrix $G$ exists if and only if $m\geq n$; see e.g. \cite[Example 2.4]{gluesing2022representability}. 
\end{example}

Definition~\ref{def:representability}(3) says that if $\mM=(\mL(E),\rho)$ is $m$-multilinear, then it is represented by an $\F_q$-$[n\times m,k]$ rank-metric code $\C$ such that $k$ and $\dim(\C(U))$ are divisible by $m$ for every $U\in\mL(E)$. This leads to the following definition, which can be considered the $q$-analogue of almost affine Hamming-metric codes (see Definition \ref{def:alm_affine_classical}).

\begin{definition}\label{def:almost_affine_rank-metric}
We say that an $\F_q$-$[n\times m,k]$ rank-metric code $\C$ is $m$-\textbf{almost affine} if $\dim_{\F_q}(\C(U))$ is a multiple of $m$ for every $U\subseteq \F_q^n$. In particular, $m$ divides $k$. If the parameter $m$ is clear from the context we simply say that $\C$ is an almost affine rank-metric code.
\end{definition}

From Definition~\ref{def:almost_affine_rank-metric}, it follows that the
$q$-polymatroid arising from an $m$-almost affine rank-metric code is indeed a $q$-matroid, since its rank function is integer-valued. 

Clearly, after fixing an $\F_q$-basis of $\F_{q^m}$, every $\F_{q^m}$-linear rank-metric code is $m$-almost affine, and every
$\F_{q^m}$-representable $q$-matroid is also $m$-multilinearly representable. In this work, we are interested in $m$-multilinear $q$-matroids arising from $m$-almost affine rank-metric codes that are not $\F_{q^m}$-linear. In other words, we are interested in $q$-matroids that are $\F_q^{n\times m}$-representable but not $\F_{q^m}$-representable. In the following, we refer to such $q$-matroids as \textbf{purely $m$-multilinear}.

The notions of representability in \Cref{def:representability} are compatible with $q$-polymatroid duality, as defined in \Cref{def: qPM_dual}. The following theorem makes this compatibility explicit by relating the dual of the $q$-polymatroid associated with a matrix rank-metric code to the $q$-polymatroid associated with
the dual code.

\begin{theorem}(\cite[Theorem~8.1]{gorla2019rank})
Let $\mC\subseteq\F_q^{n\times m}$ be an $\F_q$-linear rank-metric code, and let $\mC^\perp\subseteq\F_q^{n\times m}$ be its dual. Then
$\mM[\mC]^*=\mM[\mC^\perp]$.
\end{theorem}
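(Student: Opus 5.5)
We need to prove that $\mM[\mC]^*=\mM[\mC^\perp]$, i.e.
\[
\rho_{\mC^\perp}(V)=\dim(V)+\rho_\mC(V^\perp)-\rho_\mC(E)\qquad\text{for all } V\in\mL(E).
\]
Write $k=\dim\mC$, so $\dim\mC^\perp=nm-k$. Since $\mC(0)=0$, we have $\rho_\mC(E)=k/m$, and $\rho_\mC(V^\perp)=(k-\dim\mC(V))/m$. Also $\rho_{\mC^\perp}(V)=(nm-k-\dim\mC^\perp(V^\perp))/m$. Multiplying the desired identity by $m$ and substituting, it becomes
\[
\dim \mC^\perp(V^\perp) \;=\; nm - k - m\dim V + \dim \mC(V).
\]
This is the dimension relation we will prove. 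Set $U:=V^\perp$ and $u=\dim U=n-\dim V$; the target is then $\dim\mC^\perp(U)=mu-k+\dim\mC(U^\perp)$.

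First, the space $S_U:=\{M\in\F_q^{n\times m}:\colsp(M)\subseteq U\}$ has dimension $mu$, and $\mC^\perp(U)=\mC^\perp\cap S_U$. Next, we compute $S_U^\perp$ under the trace form. We have $\mathrm{Tr}(MN^\top)=\sum_j \langle M^{(j)},N^{(j)}\rangle$, where $M^{(j)}$ denotes the $j$-th column. Since the columns of $M\in S_U$ range independently over $U$, it follows that $S_U^\perp=S_{U^\perp}$. As a check, the dimensions add up: $mu+m(n-u)=nm$.

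Then
\[
\mC^\perp\cap S_U=(\mC+S_{U^\perp})^\perp .
\]
Hence
\[
\dim(\mC^\perp\cap S_U)=nm-\dim(\mC+S_{U^\perp})=nm-k-m(n-u)+\dim(\mC\cap S_{U^\perp}).
\]
Since $\mC\cap S_{U^\perp}=\mC(U^\perp)$, this gives exactly $mu-k+\dim\mC(U^\perp)$, as required.

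The only real content is the identification $S_U^\perp=S_{U^\perp}$ together with the standard facts $(A+B)^\perp=A^\perp\cap B^\perp$ and $\dim A^\perp=nm-\dim A$. Both facts hold because the trace form is nondegenerate; this is where care is needed, since over $\F_q$ the form may have isotropic vectors, but nondegeneracy is all the argument uses. Everything else is bookkeeping with $U$ versus $U^\perp$. With $V=U^\perp$, the computation unwinds to the displayed identity, which proves the theorem.
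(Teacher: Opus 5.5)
Your proof is correct: the reduction to $\dim\mC^\perp(U)=m\dim U-k+\dim\mC(U^\perp)$, the identification $S_U^\perp=S_{U^\perp}$ under the trace form, and the dimension count via $(\mC+S_{U^\perp})^\perp=\mC^\perp\cap S_U$ all check out. The paper gives no proof of its own and only cites \cite[Theorem~8.1]{gorla2019rank}; your argument is essentially the standard one behind that result, which rests on exactly this dimension identity relating $\mC(U^\perp)$ and $\mC^\perp(U)$.
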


It follows that the duality bijection between $q$-polymatroids on $E$ of rank $k$ and $q$-polymatroids on $E$ of rank $n-k$ preserves representability in each of the senses considered
in \Cref{def:representability}. Consequently, in \Cref{subsec: classification_F23_F24}, we will restrict ourselves to consider multilinear $q$-matroids of rank $k$ with $0\leq k\leq \left\lfloor \frac{n}{2}\right\rfloor$, where $n=\dim E$.

We conclude this subsection by recalling a construction that associates an ordinary matroid with a given $q$-matroid. This construction is compatible with representability of the $q$-matroid in the sense of \Cref{def:representability}(2). The following result was proved in \cite{gluesing2022q}.

\begin{theorem}(\cite[Theorem 4.4]{gluesing2022q})\label{thm: induced_mat}
    Let $\mM=(\mL(E),\rho)$ be a $q$-matroid, and let
    $\mB=\{v_1,\ldots,v_n\}$ be a basis of $E$. Define
    $$
        r:2^\mB\rightarrow\mathbb{Z},\qquad A\mapsto\rho(\langle A\rangle).
    $$
    Then $(\mB,r)$ is a matroid, called the \textbf{induced matroid} of $\mM$ with respect to $\mB$, and is denoted by $M(\mM,\mB)$. Moreover, if $\mM$ is $\F_{q^m}$-representable, then $M(\mM,\mB)$ is also $\F_{q^m}$-representable.
\end{theorem}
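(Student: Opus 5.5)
We must prove Theorem 4.4 of \cite{gluesing2022q}: $r(A)=\rho(\langle A\rangle)$ is a matroid rank function on $\mB$, and $\F_{q^m}$-representability transfers from $\mM$ to $M(\mM,\mB)$.

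For the matroid axioms, I will check the three rank axioms for $r$ directly from (R1)--(R3). First, boundedness: since $\mB$ is a basis, any subset $A\subseteq\mB$ is linearly independent, so $\dim\langle A\rangle=|A|$. Hence (R1) gives $0\le r(A)\le |A|$, and $r$ is integer-valued because $\mM$ is a $q$-matroid. Second, monotonicity: $A\subseteq A'$ implies $\langle A\rangle\subseteq\langle A'\rangle$, so (R2) gives $r(A)\le r(A')$.

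Submodularity is the only point needing care. We need $\langle A\cup A'\rangle=\langle A\rangle+\langle A'\rangle$, which is immediate. We also need $\langle A\cap A'\rangle=\langle A\rangle\cap\langle A'\rangle$. Here the linear independence of $\mB$ is used: a vector in both spans has unique coordinates with respect to $\mB$. Its support is contained in both $A$ and $A'$, hence in $A\cap A'$. The reverse inclusion is trivial. Applying (R3) to $\langle A\rangle$ and $\langle A'\rangle$ then yields $r(A\cap A')+r(A\cup A')\le r(A)+r(A')$.

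For representability, suppose $\mM=\mM[\C]$ with $\C\subseteq\F_{q^m}^n$ of $\F_{q^m}$-dimension $K$. Let $G\in\F_{q^m}^{K\times n}$ be a generator matrix. I will use the standard description of the rank function, $\rho(W)=\rk_{\F_{q^m}}(GY^\top)$, where $Y\in\F_q^{t\times n}$ is any matrix whose rows span $W$. This follows from Definition~\ref{def:vector_codes}. The kernel of $x\mapsto Y x^\top$ restricted to $\C$ consists of the codewords whose $\F_q$-expansions have all columns orthogonal to $W$, i.e.\ whose support lies in $W^\perp$. Hence $\dim_{\F_{q^m}}\C(W^\perp)=K-\rk(GY^\top)$.

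Now let $B\in\GL_n(\F_q)$ be the matrix whose $i$-th row is $v_i$, and set $H:=GB^\top\in\F_{q^m}^{K\times n}$. For $A\subseteq\mB$, take $Y$ to be the rows of $B$ indexed by $A$. Then $GY^\top$ is exactly the column submatrix of $H$ indexed by $A$, so $r(A)$ equals the $\F_{q^m}$-rank of those columns. Thus $H$ represents $M(\mM,\mB)$ over $\F_{q^m}$.

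The main obstacle is justifying the formula $\rho(W)=\rk(GY^\top)$ carefully, in particular the identification of $\supp(x)\subseteq W^\perp$ with $Yx^\top=0$ via the basis expansion $\Gamma$. This holds because $Y$ has entries in $\F_q$, so $Yx^\top=\sum_i\gamma_i\,Y\Gamma(x)_{\cdot,i}$ vanishes iff every column of $\Gamma(x)$ lies in $W^\perp$.
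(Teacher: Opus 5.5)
Your proof is correct. The paper does not prove this statement itself; it only quotes it from \cite{gluesing2022q}. Your argument is essentially the standard proof given there: the matroid rank axioms are inherited from (R1)--(R3) because $\langle A\cap A'\rangle=\langle A\rangle\cap\langle A'\rangle$ for subsets of a basis, and the representing matrix $GB^\top$ comes from the rank formula $\rho(W)=\rk(GY^\top)$ of \cite[Remark~4.3]{gluesing2022q}.
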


\begin{remark}
    The converse of \Cref{thm: induced_mat} does not hold in general. A notable counterexample is the $q$-matroid constructed in \cite[Section 3.3]{cj2022}, which was shown there to be non-$\F_{q^m}$-representable. However, its induced matroid has ground set of size $4$, and is representable over every field of size at least $3$.
\end{remark}

By applying the contrapositive of the final assertion in \Cref{thm: induced_mat}, the authors of \cite{gluesing2022q} used non-$\F_{q^m}$-representability of induced matroids to prove non-$\F_{q^m}$-representability of certain $q$-matroids. Examples include the Vámos $q$-matroid, the Fano $q$-matroid, and the non-Pappus $q$-matroid (see also \Cref{sec: q_non_pappus}), whose induced matroids are respectively the Vámos, Fano, and non-Pappus matroids. For further details, see \cite[Example 4.8]{gluesing2022q}.

%%%%%%%%%%%%%%%%%%%%%%%%%%%%%%%%%%%%%%%%%%%%%%%%%%%%%%%%%%%%%%%%%%%%%%%

\subsection{3-tensor codes}\label{sub:3tensors}
The theory of rank-metric codes associated with their generating $3$-tensors was developed in \cite{byrne2019tensor}. We recall here some basic results that will be used throughout.

We mainly work with tensor products of the form $\mathbb{F}_q^k \otimes \mathbb{F}_q^n \otimes \mathbb{F}_q^m$, whose elements are called $3$-tensors, third-order tensors, or triads. Throughout, we identify this tensor product with $\mathbb{F}_q^{k \times n \times m}$.
As in \cite{byrne2019tensor}, we introduce the following maps, which define multiplication of $3$-tensors with vectors and matrices:
\begin{align*}
    m_1: \F_q^{s\times k} \times \F_q^{k\times n\times m} &\to \F_q^{s\times n\times m} : (A, X) \mapsto m_1(A, X) = \sum_i (Au_i) \otimes v_i \otimes w_i,\\
    m_2: \F_q^{s\times n} \times \F_q^{k\times n\times m} &\to \F_q^{k\times s\times m} : (B, X) \mapsto m_2(B, X) = \sum_i u_i \otimes (Bv_i) \otimes w_i,\\
    m_3: \F_q^{s\times m} \times \F_q^{k\times n\times m} &\to \F_q^{k\times n\times s} : (D, X) \mapsto m_3(D, X) = \sum_i u_i \otimes v_i \otimes (Dw_i),
\end{align*}
for any $X =\sum_i u_i \otimes v_i \otimes w_i \in\F_q^{k\times n \times m}$.

\begin{remark}
    When $s = 1$, the above maps $m_i$ yield 2-tensors, for $i=1,2,3$. In this case, for ease of notation, we will consider the images of the $m_i$ to be in the spaces of matrices over~$\F_q$.
\end{remark}

\begin{definition}
    Let $X\in\F_q^{N_1}\otimes\F_q^{N_2}\otimes\F_q^{N_3}$. For each $i\in\{1, 2, 3\}$, we define the \textbf{$i$-th slice space} of $X$ to be the $\F_q$-span of $\{m_i(e_j , X) : 1 \leq j \leq N_i\}$, that is,
$$
\sss_i(X) := \langle m_i(e_1,X),\ldots, m_i(e_{N_i}, X)\rangle,
$$
where $e_j$ denotes the $j$-th vector of the standard basis, regarded as a row vector in the appropriate space. We write $\dim_i(X)$ to denote the dimension of $\sss_i(X)$ as an $\F_q$-vector space. We say that $\sss_i(X)$ is \textbf{nondegenerate} if $\dim_i(X) = N_i$, in which case we say that $X$ is \textbf{$i$-nondegenerate}.
\end{definition}

Figures~\ref{fig:slice1}, \ref{fig:slice2}, and \ref{fig:slice3} show the different types of slices of a $3$-tensor obtained by fixing the first, second, and third index, respectively.

 \begin{figure}[htb!]
   \centering
       \begin{minipage}{0.32\textwidth}
       \centering
           
\begin{tikzpicture}[scale=0.7,
    slice/.style={
        draw=black!60,
        line join=round,
        line cap=round,
        very thin
    }
]

% --- Colors ---
\definecolor{topcol}{RGB}{255,242,190}
\definecolor{sideA}{RGB}{238,218,150}
\definecolor{sideB}{RGB}{222,200,125}

% --- Geometry parameters ---
\def\w{4}        % width
\def\d{1.6}      % depth
\def\s{1}        % perspective slope
\def\t{0.35}     % thickness
\def\gap{0.8}    % vertical spacing

% --- Stack ---
\foreach \i in {0,...,4}{
\begin{scope}[yshift=\i*\gap cm]

% front
\path[slice,
      shading=axis,
      left color=sideA!90,
      right color=sideA!60]
(0,0) -- (\w,0) -- (\w,\t) -- (0,\t) -- cycle;

% right side
\path[slice,
      shading=axis,
      left color=sideB!80,
      right color=sideB!60]
(\w,0) -- (\w+\d,\s) -- (\w+\d,\s+\t) -- (\w,\t) -- cycle;

% top
\path[slice,
      shading=axis,
      top color=topcol,
      bottom color=topcol!85]
(0,\t) -- (\w,\t) -- (\w+\d,\s+\t) -- (\d,\s+\t) -- cycle;

\end{scope}
}

\end{tikzpicture}
\caption{\label{fig:slice1}1st slice space.}
\end{minipage}
       \begin{minipage}{0.32\textwidth}
       \centering
           \includegraphics[width=0.8\linewidth]{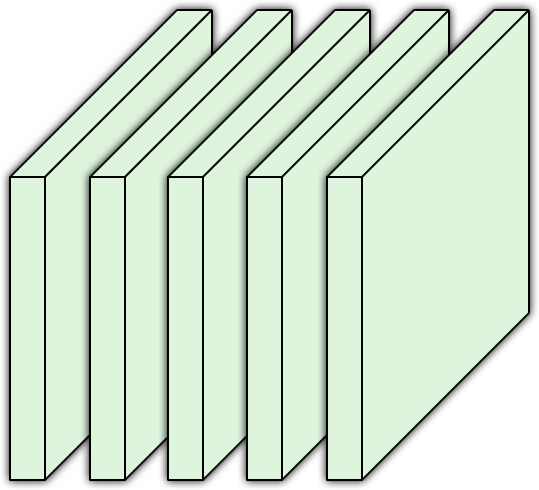}
            \caption{\label{fig:slice2}2nd slice space.}
       \end{minipage}
       \begin{minipage}{0.32\textwidth}
       \centering
           \includegraphics[width=0.8\linewidth]{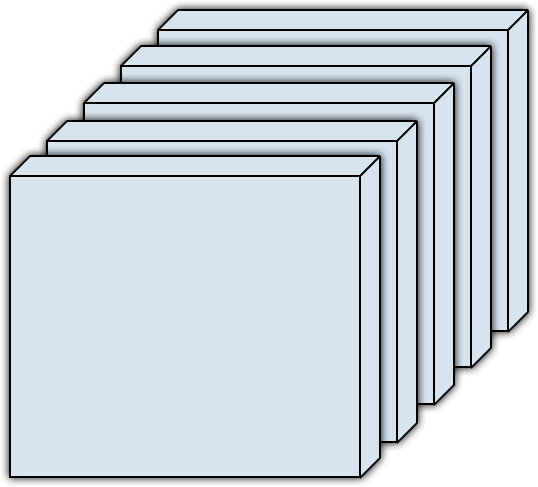}
           \caption{\label{fig:slice3}3rd slice space.}
       \end{minipage}
   \end{figure}

In the theory of rank-metric codes, a natural representation of an $\F_q$-$[n\times m,k]$ code is given by a \emph{generator tensor}. This provides an analogue of the notion of a generator matrix in the rank-metric setting, in the form of a $3$-tensor; for more details we refer the interested reader to~\cite{byrne2019tensor}.

\begin{definition}
Let $\mC$ be an $\F_q$-$[n\times m, k]$ code. A \textbf{generator tensor} for the code $\mC$ is an
element $T\in\F_q^{k\times n\times m}$ such that $\sss_1(T) = \mC$. 
\end{definition}

Let $\mC$ be an $\F_q$-$[n\times m, k]$ code with generator tensor $T\in\F_q^{k\times n\times m}$. Then, $\dim_1(T) = \dim_{\F_q}(\mC)$. However, $\dim_2(T)$ and $\dim_3(T)$ also play an important role, as shown in the following result. 

\begin{lemma}(\cite[Proposition~4.6]{byrne2019tensor})\label{lem:nondegeneracy}
Let $\mC$ be an $\F_q$-$[n\times m, k]$ code with generator tensor $T\in\F_q^{k\times n\times m}$. Then
$$
\dim_2(T) = \dim\left(\sum_{M\in\mC} \colsp_{\F_q}(M)\right),\qquad \dim_3(T) = \dim\left(\sum_{M\in\mC} \rowsp_{\F_q}(M)\right).
$$
In particular, $\mC$ is nondegenerate if and only if every generator tensor for $\C$ is both $2$-nondegenerate and $3$-nondegenerate.
\end{lemma}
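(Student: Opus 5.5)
We prove the two identities of \Cref{lem:nondegeneracy} directly from the definitions of the slice spaces, and then derive the nondegeneracy statement from them.

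\textbf{Setup.} Write $T=(T_{ijl})\in\F_q^{k\times n\times m}$ and set $M_i:=m_1(e_i,T)\in\F_q^{n\times m}$. These are the first slices, so $M_i=(T_{ijl})_{j,l}$. Since $\sss_1(T)=\mC$, the matrices $M_1,\dots,M_k$ span $\mC$. Similarly, the second slice $m_2(e_j,T)\in\F_q^{k\times m}$ is the matrix $(T_{ijl})_{i,l}$; its $i$-th row is the $j$-th row of $M_i$.

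\textbf{Second slice space.} The key observation is the following duality. Because $\sum_{M\in\mC}\colsp(M)=\sum_i\colsp(M_i)$, its dimension equals $n$ minus the dimension of
\[
\{y\in\F_q^n:\ y^\top M_i=0\ \text{for all } i\}.
\]
Now $y^\top M_i$ is exactly the $i$-th row of $\sum_j y_j\,m_2(e_j,T)$. Hence $y$ lies in this annihilator if and only if $\sum_j y_j\, m_2(e_j,T)=0$. So the annihilator is precisely the space of linear relations among the $n$ second slices, which has dimension $n-\dim_2(T)$. Combining the two counts gives
\[
\dim_2(T)=\dim\Big(\sum_{M\in\mC}\colsp(M)\Big).
\]

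\textbf{Third slice space.} The identical argument applies with rows in place of columns. A vector $z\in\F_q^m$ satisfies $M_iz=0$ for all $i$ if and only if $\sum_l z_l\, m_3(e_l,T)=0$. Therefore $\dim_3(T)=\dim\big(\sum_{M\in\mC}\rowsp(M)\big)$.

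\textbf{Nondegeneracy.} We take $\mC$ nondegenerate to mean that $\sum_{M\in\mC}\colsp(M)=\F_q^n$ and $\sum_{M\in\mC}\rowsp(M)=\F_q^m$. By the two identities, this holds if and only if $\dim_2(T)=n$ and $\dim_3(T)=m$. Both right-hand sides depend only on $\mC$, not on the choice of generator tensor, so the equivalence holds for every generator tensor of $\mC$.

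\textbf{Main obstacle.} The only delicate point is index bookkeeping: one must check that the $i$-th row of the second slice $m_2(e_j,T)$ really is the $j$-th row of $M_i$ under the conventions for $m_1$ and $m_2$. Once this is verified, the rest is rank–nullity.
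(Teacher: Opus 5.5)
Your proof is correct. There is nothing in the paper to compare it against: the paper states this lemma as a citation of \cite[Proposition~4.6]{byrne2019tensor} and gives no proof. Your index bookkeeping is right: the $i$-th row of $m_2(e_j,T)$ is the $j$-th row of $M_i$, and the $i$-th row of $m_3(e_l,T)$ is the $l$-th column of $M_i$. The rank--nullity step through common left (respectively right) annihilators is also valid over $\F_q$.

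A slightly more direct route is to use flattenings. Row $j$ of the $n\times km$ block matrix $(M_1\mid\cdots\mid M_k)$ is exactly the vectorization of the second slice $m_2(e_j,T)$. Hence $\dim_2(T)$ is the row rank of this matrix, which equals its column rank, namely $\dim\sum_i\colsp(M_i)$. Symmetrically, the columns of the $kn\times m$ matrix obtained by stacking $M_1,\dots,M_k$ are the vectorized third slices, which gives $\dim_3(T)=\dim\sum_i\rowsp(M_i)$. This avoids orthogonal complements entirely. Your version instead makes explicit that linear relations among the second (third) slices are precisely the vectors annihilating every codeword from the left (right), which is a useful viewpoint.

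Finally, the paper never defines a nondegenerate matrix code, so stating the convention as you did is necessary. Your convention, that the column spaces sum to $\F_q^n$ and the row spaces sum to $\F_q^m$, is the standard one. With it, the ``every generator tensor'' clause follows, as you note, because the right-hand sides depend only on $\mC$.
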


The three slice spaces obtained from a tensor $T\in\F_q^{k\times n\times m}$ are closely related. This relation is explained in~\cite{alfarano2026}. We will make use of this relation in the next section.

%%%%%%%%%%%%%%%%%%%%%%%%%%%%%%%%%%%%%%%%%%%%%%%%%%%%%%%%%%%%%%%%%%%%%%%%%%%%%%%%%%%%%%%%%%
%%%%%%%%%%%%%%%%%%%%%%%%%%%%%%%%%%%%%%%%%%%%%%%%%%%%%%%%%%%%%%%%%%%%%%%%%%%%%%%%%%%%%%%%%%
\section{The rank function of representable \emph{q}-polymatroids}\label{sec:tensor_proj}
In this section, we give an equivalent description of the rank function of a representable $q$-polymatroid in terms of a generator tensor of one of its representations.

Let $\C$ be an $\F_q$-$[n\times m,k]$ rank-metric code, and let $T\in\F_q^{k\times n\times m}$ be a generator tensor for
$\C$, that is, $\sss_1(T)=\C$.
Let $U\subseteq \F_q^n$ be a subspace of dimension $u$. Choose
$I=\{1,\ldots,u\}\subseteq [n]$ and a matrix $A\in\GL_n(\F_q)$ such that $U^\perp=A^{-1}V_{\bar I}$,
where $V_{\bar I}=\langle e_i : i\in [n]\setminus I\rangle$.
By the isomorphism~\eqref{eq_first_iso}, applied to $U^\perp$, we then have $\C/\C(U^\perp)\cong \Pi_r(\C,A,I)$.
Moreover, by \cite[Section~7]{byrne2019tensor},
$$\Pi_r(\C,A,I)=\sss_1(m_2(A_I,T)).$$
This motivates the following definition.

\begin{definition}\label{def:rank_tensor}
Let $\C$ be an $\F_q$-$[n\times m,k]$ code and let $T\in\F_q^{k\times n\times m}$ be a generator tensor of $\C$. For each $U\in\mL(\F_q^n)$, define 
$$\rho_T(U):=\frac{\dim\bigl(\sss_1(m_2(A_I,T))\bigr)}{m},$$
where $I=\{1,\ldots,\dim(U)\}$ and $A\in\GL_n(\F_q)$ is any matrix such that $U^\perp=A^{-1}V_{\bar I}$. In the cases $\dim(U)=0$ and $\dim(U)=n$, we use the natural conventions $I=\emptyset$ and $I=[n]$, respectively.
\end{definition}

The next proposition shows that $\rho_T$ is well-defined and coincides with the rank function of the $q$-polymatroid associated with $\C$.

\begin{proposition}
Let $\C$ be an $\F_q$-$[n\times m,k]$ code and let $T\in\F_q^{k\times n\times m}$ be a generator tensor of~$\C$. Then $\rho_T$ is a $q$-rank function and $(\mL(\F_q^n),\rho_T)=\mM[\C]$.
\end{proposition}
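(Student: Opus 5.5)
The plan is to show directly that $\rho_T(U)=\rho_\C(U)$ for every $U\in\mL(\F_q^n)$. This gives both claims at once: it shows $\rho_T$ is independent of the choice of $A$ (hence well-defined), and it shows $\rho_T$ is a $q$-rank function, since $\rho_\C$ is one by \cite[Theorem 5.3]{gorla2019rank} and $(\mL(E),\rho_\C)=\mM[\C]$.

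Fix $U$ of dimension $u$, and let $I=\{1,\ldots,u\}$. Take any $A\in\GL_n(\F_q)$ with $U^\perp=A^{-1}V_{\bar I}$. By the lemma on $\phi^r_{A,I}$ preceding \eqref{eq_first_iso}, applied to the subspace $A^{-1}V_{\bar I}=U^\perp$, the map $M\mapsto (AM)_I$ is a surjection $\C\to\Pi_r(\C,A,I)$ with kernel $\C(U^\perp)$. Hence
\[
\dim\Pi_r(\C,A,I)=k-\dim\C(U^\perp).
\]
This quantity depends only on $U$, not on $A$.

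The remaining step, and the only one needing care, is the identity $\Pi_r(\C,A,I)=\sss_1(m_2(A_I,T))$, quoted from \cite[Section~7]{byrne2019tensor}. I would verify it directly. Write $T=\sum_i u_i\otimes v_i\otimes w_i$. The first slices of $T$ are the matrices $M_j=m_1(e_j,T)=\sum_i (e_ju_i)\, v_iw_i^\top$, and these span $\C$. By definition, $m_2(A_I,T)=\sum_i u_i\otimes (A_Iv_i)\otimes w_i$. Its $j$-th first slice is therefore
\[
\sum_i (e_ju_i)\,A_Iv_iw_i^\top=A_IM_j=(AM_j)_I.
\]
So $\sss_1(m_2(A_I,T))$ is spanned by the images $(AM_j)_I$, and by linearity of $M\mapsto(AM)_I$ this span equals $\Pi_r(\C,A,I)$.

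Dividing by $m$ then gives $\rho_T(U)=\bigl(k-\dim\C(U^\perp)\bigr)/m=\rho_\C(U)$. It remains to check the degenerate cases.
\begin{itemize}
\item If $u=0$, then $I=\emptyset$, the tensor $m_2(A_I,T)$ is the empty (zero) tensor, and $U^\perp=E$ gives $\C(E)=\C$. Both sides are $0$.
\item If $u=n$, then $I=[n]$, so $A_I=A$ is invertible and $\sss_1(m_2(A,T))=A\C\cong\C$. Since $\C(0)=0$, both sides equal $k/m$.
\end{itemize}

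I expect no real obstacle. The only points needing attention are the well-definedness (handled by the $A$-independent kernel computation) and the bookkeeping of index conventions in $m_2$ and the slice maps.
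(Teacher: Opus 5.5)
Your proof is correct and follows the paper's route. You identify $\sss_1(m_2(A_I,T))$ with $\Pi_r(\C,A,I)$, use the kernel lemma to get dimension $k-\dim\C(U^\perp)$, and inherit the rank axioms from $\rho_\C$; beyond the paper, you verify that identity directly instead of citing \cite[Section~7]{byrne2019tensor}, and you make explicit the independence from $A$ and the cases $\dim U\in\{0,n\}$, which the paper leaves implicit.
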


\begin{proof}
Let $U\subseteq \F_q^n$, and set $u:=\dim(U)$. Choose $I=\{1,\ldots,u\}$ and $A\in\GL_n(\F_q)$ such that $U^\perp=A^{-1}V_{\bar I}$. By \cite[Section~7]{byrne2019tensor}, we have $ \Pi_r(\C,A,I)=\sss_1(m_2(A_I,T))$. Taking dimensions and using \eqref{eq_first_iso}, applied to $U^\perp$, we obtain
\begin{align*}
    \dim\bigl(\sss_1(m_2(A_I,T))\bigr) = \dim\bigl(\Pi_r(\C,A,I)\bigr) = \dim\bigl(\C/\C(U^\perp)\bigr) = k-\dim(\C(U^\perp)).
\end{align*}

Therefore
\[
    \rho_T(U)=\frac{k-\dim(\C(U^\perp))}{m}.
\]
By Definition~\ref{def:codepoly}, the right-hand side is precisely the rank function of the $q$-polymatroid $\mM[\C]$. Hence $(\mL(\F_q^n),\rho_T)=\mM[\C]$. In particular, $\rho_T$ is a $q$-rank function.
\end{proof}

\begin{remark}
The formula in \Cref{def:rank_tensor} can be viewed as a tensor version of \cite[Remark~4.3]{gluesing2022q}. Indeed, after fixing an $\F_q$-basis $\Gamma$ of $\F_{q^m}$, we may identify $\F_{q^m}^n$ with $\F_q^{n\times m}$ by expanding each coordinate in the basis $\Gamma$.
Thus, if $\mC\leq \F_q^{n\times m}$ has generator tensor $T\in\F_q^{k\times n\times m}$, the preimages of an $\F_q$-basis of $\mC$ give a matrix $G\in\F_{q^m}^{k\times n}$ whose rows generate the corresponding $\F_q$-subspace of $\F_{q^m}^n$. For a subspace $U\leq \F_q^n$, \cite[Remark~4.3]{gluesing2022q} computes $\rho_\mC(U)$ as $\operatorname{rowrk}_{\F_q}(GY)/m$, where the columns of $Y$ form a basis of $U$. In \Cref{def:rank_tensor}, the same quantity is computed as $\dim_{\F_q}\sss_1(m_2(A_I,T))/m$, after moving $U^\perp$ to standard position. The next example illustrates the two descriptions.
\end{remark}

\begin{example}
Let $q=2$, $n=3$, and $m=2$, and fix an $\F_2$-basis $\{1,\gamma\}$ of
$\F_4$. Let $\mC\leq \F_2^{3\times 2}$ be generated by
\[
M_1=
\begin{pmatrix}
1&0\\
0&0\\
0&0
\end{pmatrix},
\qquad
M_2=
\begin{pmatrix}
0&0\\
1&0\\
0&1
\end{pmatrix},
\qquad
M_3=
\begin{pmatrix}
0&1\\
0&0\\
1&0
\end{pmatrix}.
\]
Let $T\in\F_2^{3\times 3\times 2}$ be the generator tensor whose first slices are $M_1,M_2,M_3$. Take $U=\la e_1,e_2\ra$. Then $U^\perp=\la e_3\ra$, so in \Cref{def:rank_tensor} we may take $I=\{1,2\}$ and $A=I_3$. Hence $\sss_1(m_2(A_I,T))$ contains the restrictions of $M_1,M_2,M_3$ to their first two
rows:
\[
(M_1)_I=
\begin{pmatrix}
1&0\\
0&0
\end{pmatrix},
\qquad
(M_2)_I=
\begin{pmatrix}
0&0\\
1&0
\end{pmatrix},
\qquad
(M_3)_I=
\begin{pmatrix}
0&1\\
0&0
\end{pmatrix}.
\]
These are linearly independent over $\F_2$, so
\[
    \rho_T(U)=\frac{\dim_{\F_2}\sss_1(m_2(A_I,T))}{2}=\frac{3}{2}.
\]

On the other hand, with respect to the basis $\{1,\gamma\}$ of $\F_4$, the matrices $M_1,M_2,M_3$ correspond to
\[
    g_1=(1,0,0),\qquad g_2=(0,1,\gamma),\qquad g_3=(\gamma,0,1).
\]
Thus we may take
\[
G=
\begin{pmatrix}
1&0&0\\
0&1&\gamma\\
\gamma&0&1
\end{pmatrix}.
\]
For $U=\la e_1,e_2\ra\subseteq \F_2^3$, take
\[
Y=
\begin{pmatrix}
1&0\\
0&1\\
0&0
\end{pmatrix}.
\]
Then
\[
GY=
\begin{pmatrix}
1&0\\
0&1\\
\gamma&0
\end{pmatrix},
\]
whose $\F_2$-row rank is $3$. Therefore \cite[Remark~4.3]{gluesing2022q}
also gives
\[
    \rho_\mC(U)=\frac{\operatorname{rowrk}_{\F_2}(GY)}{2}=\frac{3}{2}.
\]
\end{example}

%%%%%%%%%%%%%%%%%%%%%%%%%%%%%%%%%%%%%%%%%%%%%%%%%%%%%%%%%%%%%%%%%%%%%%%%%%%%%%%%%%%%%%%%%%
\subsection{Projectivization}
In \cite{alfarano2022linear}, it is shown how to construct a Hamming-metric code from an $\F_{q^m}$-linear rank-metric code, and how the parameters of the two codes are related. In analogy with this $\F_{q^m}$-linear setting, \cite{alfarano2026} illustrates how to associate an \emph{additive Hamming-metric code} with an $\F_q$-linear rank-metric code. This code is obtained by evaluating a generator tensor on the points of the projective space $\PP(\F_q^n)$, and it encodes the same geometric information as the original rank-metric code. We recall the construction below.

Let $\C$ be an $\F_q$-$[n\times m,k]$ nondegenerate rank-metric code, and let $T\in\F_q^{k\times n\times m}$ be a generator tensor for $\C$, that is, $\sss_1(T)=\C$. Let $N=(q^n-1)/(q-1)$, and choose representatives $u_1,\ldots,u_N\in\F_q^n\setminus\{0\}$ of the points of $\PP(\F_q^n)$. For each $i\in[N]$, define $M_i:=m_2(u_i,T)\in\F_q^{k\times m}$, where $u_i$ is regarded as a row vector. Concatenating these blocks, we obtain the matrix
\begin{equation*}
    G^{\mathrm{AH}}
    :=
    \begin{pmatrix}
        M_1 & M_2 & \cdots & M_N
    \end{pmatrix}
    \in\F_q^{k\times mN}.
\end{equation*}

\begin{definition}[\cite{alfarano2026}]
The \textbf{extended additive Hamming-metric code associated with $\C$} is the $\F_q$-linear code
$\C^{\mathrm{AH}}:=\rowsp_{\F_q}(G^{\mathrm{AH}})\subseteq \F_q^{mN}$.
\end{definition}

Different choices of the representatives $u_i$ lead to equivalent additive codes. Therefore, the equivalence class of $\mC^{\mathrm{AH}}$ depends only on the equivalence class of $\mC$.

We now define the projectivization of a $q$-polymatroid. Let $\mM=(\mL(E),\rho)$ be a $q$-polymatroid on $E$. The \textbf{projectivization} of $\mM$ is the polymatroid on the ground set $\PP(E)$ whose rank function is given, for all $A\subseteq\PP(E)$, by
$$
r_{\Proj(\mM)}(A):=\rho(\langle A\rangle),
$$
where $\langle A\rangle$ denotes the $\F_q$-subspace of $E$ generated by any set of nonzero representatives of the points in $A$.

The extended additive Hamming-metric code $\mC^{\mathrm{AH}}$ naturally induces a polymatroid on $[N]$ with rank function
$$
r_{\mC^{\mathrm{AH}}}(V):=\frac{1}{m}\dim\bigl((\mC^{\mathrm{AH}})_V\bigr),\qquad V\subseteq [N],
$$
where $(\mC^{\mathrm{AH}})_V$ denotes the projection of $\mC^{\mathrm{AH}}$ onto the block coordinates indexed by $V$.

Equivalently, if $V=\{i_1,\ldots,i_s\}\subseteq[N]$, then
$$
r_{\mC^{\mathrm{AH}}}(V)
=
\frac{1}{m}\dim\left(
\rowsp_{\F_q}
\begin{pmatrix}
M_{i_1} & \cdots & M_{i_s}
\end{pmatrix}
\right).
$$

The following result is a generalization of \cite[Theorem~6.13]{jany2023proj_matroid} to the case of $\F_q$-linear rank-metric codes.

\begin{theorem}\label{thm:projectivization_qpolymatroid_AH}
Let $\C$ be an $\F_q$-$[n\times m,k]$ nondegenerate rank-metric code with generator tensor $T\in\F_q^{k\times n\times m}$. Let
$\mM[\C]=(\mL(E),\rho_T)$ be the associated $q$-polymatroid, and let $\C^{\mathrm{AH}}$ be an extended additive Hamming-metric code associated with $\C$.
Then the projectivization $\Proj(\mM[\C])$ is equivalent to the polymatroid induced by $\C^{\mathrm{AH}}$.
\end{theorem}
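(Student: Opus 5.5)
The plan is to identify the ground sets $\PP(E)$ and $[N]$ via $u_i\mapsto i$, where $u_1,\ldots,u_N$ are the chosen representatives. Under this identification we show that for every $A\subseteq\PP(E)$, say $A=\{\la u_{i_1}\ra,\ldots,\la u_{i_s}\ra\}$ with index set $V=\{i_1,\ldots,i_s\}$,
\[
\rho_T(\la A\ra)=r_{\mC^{\mathrm{AH}}}(V).
\]
The two rank functions then coincide on the nose, with scaling factor $a=1$ and the identity bijection, which gives the claimed equivalence.

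The key computation is a description of $\rho_T(U)$ in terms of $m_2$-contractions. Let $Y\in\F_q^{s\times n}$ be the matrix whose rows are $u_{i_1},\ldots,u_{i_s}$, so that $U:=\la A\ra=\rowsp(Y)$. Linearity of $m_2$ gives
\[
m_2(Y,T)\in\F_q^{k\times s\times m},
\]
whose $j$-th second slice is $m_2(u_{i_j},T)=M_{i_j}$. The first slices of $m_2(Y,T)$ are the matrices $Y M$, for $M$ ranging over a basis of $\mC=\sss_1(T)$, up to the row/column convention of \cite{byrne2019tensor}. Consequently, $\dim\sss_1(m_2(Y,T))$ is the dimension of the image of the linear map $\mC\to\F_q^{s\times m}$, $M\mapsto YM$.

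We next compute this image dimension. The kernel of $M\mapsto YM$ consists of those $M\in\mC$ with $YM=0$, that is, with every column of $M$ orthogonal to every $u_{i_j}$, i.e. $\colsp(M)\subseteq U^\perp$. So the kernel is exactly $\mC(U^\perp)$, and the image has dimension $k-\dim\mC(U^\perp)=m\,\rho_{\mC}(U)$. This argument does not require the rows of $Y$ to be independent, which lets us avoid the normalization to the form $A_I$ used in Definition~\ref{def:rank_tensor}. If we prefer to match that definition literally, we can note that $A_I$ and any $Y$ with the same row space give the same kernel, hence the same image dimension.

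It remains to compare with the additive code. The image $\{YM : M\in\mC\}$ is linearly isomorphic to the row space of the concatenation $(M_{i_1}\ \cdots\ M_{i_s})\in\F_q^{k\times sm}$. To see this, write $\mC$ through its coordinates $x\in\F_q^k$ via $M_x=\sum_\ell x_\ell T_\ell$, where the $T_\ell$ are the first slices. Then the $j$-th row of $YM_x$ equals $u_{i_j}M_x=x\,m_2(u_{i_j},T)=xM_{i_j}$. Hence $x\mapsto YM_x$ corresponds to $x\mapsto x\,(M_{i_1}\cdots M_{i_s})$, and both maps have the same rank. Dividing by $m$ gives $r_{\mC^{\mathrm{AH}}}(V)=\rho_T(\la A\ra)$.

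The main obstacle is purely bookkeeping: fixing the index conventions for $m_2$, so that $m_2(u,T)$ really has rows $xM$ in the first-slice coordinate and the transposition conventions agree. Nondegeneracy of $\mC$ plays no role in the rank identity itself; it only ensures that $\mC^{\mathrm{AH}}$ is the intended code.
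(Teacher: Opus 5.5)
Your proposal is correct and follows essentially the paper's route. You use the identity bijection $\langle u_i\rangle\mapsto i$ and observe that the row space of $\begin{pmatrix}M_{i_1}&\cdots&M_{i_s}\end{pmatrix}$ is a flattening of the first slice space $\sss_1(m_2(\cdot,T))$. The only difference is how you handle a possibly dependent spanning set: you identify the kernel of $M\mapsto YM$ with $\mC(U^\perp)$, which also shows directly that the value does not depend on the choice of $A$ in the definition of $\rho_T$. The paper instead first extracts a basis $v_1,\dots,v_t$ of $U$ among the $u_{i_j}$, compares the two block matrices via right multiplication and block-coordinate projection, and then applies the definition of $\rho_T$ with $A_U$.
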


\begin{proof}
Let $\psi:\PP(\F_q^n)\to [N]$ be the bijection defined by
$\psi(\langle u_i\rangle)=i$ for all $i\in[N]$.

Let $A=\{\langle u_{i_1}\rangle,\ldots,\langle u_{i_s}\rangle\}\subseteq\PP(\F_q^n)$,
and set $U:=\langle u_{i_1},\ldots,u_{i_s}\rangle\subseteq \F_q^n$. By definition of the rank function of the polymatroid induced by $\mC^{\mathrm{AH}}$, we have
\begin{equation*}
r_{\mC^{\mathrm{AH}}}(\psi(A))
=
\frac{1}{m}\dim\left(
\rowsp_{\F_q}
\begin{pmatrix}
m_2(u_{i_1},T) & \cdots & m_2(u_{i_s},T)
\end{pmatrix}
\right).
\end{equation*}

Choose a basis $v_1,\ldots,v_t$ of $U$ among the vectors
$u_{i_1},\ldots,u_{i_s}$, and let $A_U\in\F_q^{t\times n}$ be the matrix whose rows are $v_1,\ldots,v_t$. Since each $u_{i_j}$ is an $\F_q$-linear combination of $v_1,\ldots,v_t$, and the map $u\mapsto m_2(u,T)$ is $\F_q$-linear, the block matrix
$\begin{pmatrix}m_2(u_{i_1},T)&\cdots&m_2(u_{i_s},T)\end{pmatrix}$ is obtained from
$\begin{pmatrix}m_2(v_1,T)&\cdots&m_2(v_t,T)\end{pmatrix}$ by right multiplication with a suitable matrix of rank $mt$. Conversely, since $v_1,\ldots,v_t$ are among the $u_{i_j}$, the latter block matrix is obtained from the former by a block-coordinate projection. Hence, the two block matrices have the same rank.

Therefore
\begin{equation*}
r_{\mC^{\mathrm{AH}}}(\psi(A))
=
\frac{1}{m}\dim\left(
\rowsp_{\F_q}
\begin{pmatrix}
m_2(v_1,T) & \cdots & m_2(v_t,T)
\end{pmatrix}
\right).
\end{equation*}

Since the rows of $A_U$ are $v_1,\ldots,v_t$, the block matrix on the right represents the first slice space of the tensor $m_2(A_U,T)$. Thus
\begin{equation*}
r_{\mC^{\mathrm{AH}}}(\psi(A))
=
\frac{1}{m}\dim\bigl(\sss_1(m_2(A_U,T))\bigr).
\end{equation*}
This equals
$\rho_T(U)=\rho_T(\langle A\rangle)=r_{\Proj(\mM[\C])}(A)$.

Hence $\psi$ preserves the rank function. Therefore $\psi$ induces an isomorphism between $\Proj(\mM[\C])$ and the polymatroid induced by $\C^{\mathrm{AH}}$.
\end{proof}

\begin{corollary}
Let $\mM$ be an $m$-multilinear $q$-matroid admitting a nondegenerate
representation. Then its projectivization $\Proj(\mM)$
is an $m$-multilinear matroid over $\F_q$.
\end{corollary}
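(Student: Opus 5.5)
Let $\C\subseteq\F_q^{n\times m}$ be a nondegenerate $m$-almost affine code with $\mM=\mM[\C]$, and let $T$ be a generator tensor. By Theorem~\ref{thm:projectivization_qpolymatroid_AH}, $\Proj(\mM)$ is isomorphic, via the bijection $\psi$, to the polymatroid on $[N]$ induced by $\C^{\mathrm{AH}}$. It therefore suffices to exhibit the $m$-multilinear structure of that polymatroid directly.

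For each $i\in[N]$, view $\C^{\mathrm{AH}}\subseteq\F_q^{mN}$ and let $\pi_i$ denote projection onto the $i$-th block of $m$ coordinates. I would work in the dual picture. Set $V:=(\C^{\mathrm{AH}})^\ast$, the dual space of the code. For each $i$, let $V_i\subseteq V$ be the image of the dual map $\pi_i^\ast:(\F_q^m)^\ast\to V$; concretely, $V_i$ is the span of the block coordinate functionals of block $i$ restricted to $\C^{\mathrm{AH}}$. Equivalently, $V_i=\colsp(M_i)\subseteq\F_q^k$ after identifying $V$ with $\F_q^k/\ker$ of the row map, or simply $V=\F_q^k$ when $G^{\mathrm{AH}}$ has full row rank.

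For $X\subseteq[N]$, the projection $(\C^{\mathrm{AH}})_X$ has dimension equal to the rank of the block matrix $(M_i)_{i\in X}$. This equals $\dim\sum_{i\in X}\colsp(M_i)$. Hence
\[
r_{\C^{\mathrm{AH}}}(X)=\frac1m\dim\Bigl(\sum_{i\in X}V_i\Bigr),
\]
which is exactly the shape required in the definition of an $m$-multilinear matroid.

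Two things remain to check. First, I must confirm that this rank function is integer valued and is a matroid rank function, not merely a polymatroid one. By the proof of Theorem~\ref{thm:projectivization_qpolymatroid_AH}, $r_{\C^{\mathrm{AH}}}(X)=\rho_T(\langle \psi^{-1}(X)\rangle)$. Since $\C$ is $m$-almost affine, $\rho_T(U)=(k-\dim\C(U^\perp))/m\in\mathbb Z$. It is bounded by $|X|$ because $\rho_T(U)\le\dim U\le |X|$ by (R1), and it is monotone and submodular since it comes from subspace dimensions. So $\Proj(\mM)$ is a matroid whose rank function is $\frac1m\dim\sum_{i\in X}V_i$.

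The main subtle point is the convention for singletons. The definition of multilinear matroid allows arbitrary subspaces $V_e$. Here $\dim V_i=m\rho(\langle u_i\rangle)\in\{0,m\}$, so loops correspond to the zero subspace, which the definition permits. Nondegeneracy is only used through Theorem~\ref{thm:projectivization_qpolymatroid_AH}.

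Finally, I transport the subspaces through $\psi$, setting $V_{\langle u_i\rangle}:=V_i$. This gives the $m$-multilinear representation of $\Proj(\mM)$ over $\F_q$.
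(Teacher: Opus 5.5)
Your proof is correct and follows the same route as the paper. You transfer to the polymatroid of $\C^{\mathrm{AH}}$ via Theorem~\ref{thm:projectivization_qpolymatroid_AH}, read off the $m$-multilinear structure, and get integrality from $\mM$ being a $q$-matroid. You are in fact more explicit than the paper: you name the subspaces $V_{\langle u_i\rangle}=\colsp(M_i)\subseteq\F_q^k$, using that the rank of the block matrix $(M_{i_1}\ \cdots\ M_{i_s})$ equals $\dim\sum_j\colsp(M_{i_j})$, and you check the matroid axioms, both of which the paper leaves implicit in the phrase ``in the classical sense.''
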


\begin{proof}
Let $\mC$ be the $\F_q$-$[n\times m,k]$ nondegenerate rank-metric code representing $\mM$, that is $\mM=\mM[\mC]$.
By Theorem~\ref{thm:projectivization_qpolymatroid_AH}, the projectivization $\Proj(\mM)$ is isomorphic to the polymatroid induced by the extended additive Hamming-metric code $\mC^{\mathrm{AH}}$.

Let $u_1,\ldots,u_N$ be representatives of the points of $\PP(\F_q^n)$ and let $T$ be a generator tensor for~$\mC$. For each $i\in[N]$, set
$$
M_i:=m_2(u_i,T)\in\F_q^{k\times m}.
$$
Then the rank function of the polymatroid induced by $\mC^{\mathrm{AH}}$ is given by
$$
r(V)=\frac{1}{m}\dim\left(
\rowsp_{\F_q}
\begin{pmatrix}
M_{i_1} & \cdots & M_{i_s}
\end{pmatrix}
\right),
\qquad\text{for }V=\{i_1,\ldots,i_s\}\subseteq[N].
$$
Thus $\Proj(\mM)$ is $m$-multilinear over $\F_q$ in the classical sense. Finally, since $\mM$ is a $q$-matroid, its rank function is integer-valued. Therefore the rank function of $\Proj(\mM)$ is integer-valued as well, and hence $\Proj(\mM)$ is a matroid.
\end{proof}

%%%%%%%%%%%%%%%%%%%%%%%%%%%%%%%%%%%%%%%%%%%%%%%%%%%%%%%%%%%%%%%%%%%%%%%%%%%%%%%%%%%%%%%%%%
%%%%%%%%%%%%%%%%%%%%%%%%%%%%%%%%%%%%%%%%%%%%%%%%%%%%%%%%%%%%%%%%%%%%%%%%%%%%%%%%%%%%%%%%%%
\section{Multilinear representations of uniform and almost uniform \emph{q}-matroids}\label{sec: multilinearity-unif}

In this section, we investigate multilinearity for uniform and almost uniform $q$-matroids. We first show that nontrivial uniform $q$-matroids admit no purely multilinear representations. We then study almost uniform $q$-matroids, obtained from the uniform case by removing one arbitrary basis, and derive necessary conditions on the parameters of their matrix-code representations.

%%%%%%%%%%%%%%%%%%%%%%%%%%%%%%%%%%%%%%%%%%%%%%%%%%%%%%%%%%%%%%%%%%%%%%%%%%%%%%%%%%%%%%%%%%
\subsection{Uniform \emph{q}-matroids} \label{subsec: uniform}
Let $E=\F_q^n$, and let $0<k<n$ be an integer. In this subsection, we fix $\mM=\mU_{k,n}(q)$ to be the uniform $q$-matroid of rank $k$ on $E$, with rank function $\rho_\mM$. Equivalently, $\mM$ is the paving $q$-matroid induced by $\mS=\emptyset\subseteq\mL(E)_k$, as in \Cref{prop: paving_construction}.

We want to investigate whether there exists some integer $m>0$ such that
$\mM$ is $\F_q^{n\times m}$-representable but not $\F_{q^m}$-representable; that is, whether $\mM$ is purely $m$-multilinear.

Recall from \Cref{ex: uniform_representability} that, in this case, $\mM$ is $\F_{q^m}$-representable if and only if $m\geq n$. Therefore, in the remainder of this subsection, we assume that $m<n$.

\begin{theorem}
    \label{thm:uniform_not_multilinear}
     Let $\mM=\mU_{k,n}(q)$ with $0<k<n$. Then there is no $m<n$ such that $\mM$ is $\F_q^{n\times m}$-representable.
\end{theorem}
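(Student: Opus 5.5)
Suppose, for a contradiction, that $\mC\subseteq\F_q^{n\times m}$ is an $\F_q$-$[n\times m,K]$ code with $m<n$ and $\mM[\mC]=\mU_{k,n}(q)$. We will read off the structure of $\mC$ from the rank function and then contradict the Singleton bound \eqref{eq:Singleton}. First, since $\rho_\mC(E)=K/m=k$, we have $K=km$. Next we use the values of $\rho_\mC$ on subspaces of codimension at least $k$. For $U$ with $\dim U\le n-k$ we have $\dim U^\perp\ge k$, so $\rho_\mC(U)=\min\{k,\dim U\}=\dim U$. Hence
\[
\dim\mC(U^\perp)=km-m\dim U .
\]
Taking $W:=U^\perp$ of dimension $n-k$ (so $\dim U=k$) gives $\mC(W)=0$. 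This holds for every $W\in\mL(E)_{n-k}$.

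The vanishing of these shortened codes gives the minimum distance. If $0\neq M\in\mC$ had $\rk(M)\le n-k$, then $\colsp(M)$ would be contained in some $(n-k)$-dimensional $W$, and then $M\in\mC(W)=0$, a contradiction. Therefore $\dd(\mC)\ge n-k+1$.

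The contradiction now comes from the Singleton bound with $\max\{n,m\}=n$ and $\min\{n,m\}=m$. Since $m<n$, we have $d\ge n-k+1$, so $m-d+1\le m-n+k$. Two cases arise. If $m-n+k\le 0$, then $d>m$, which is impossible because every nonzero $M$ has rank at most $m$ and $\mC\neq 0$ (as $K=km>0$). If $m-n+k>0$, the bound \eqref{eq:Singleton} gives
\[
km=K\le n(m-d+1)\le n(m-n+k).
\]
Rearranging, $km-nm-nk+n^2\le 0$, that is, $(n-k)(n-m)\le 0$. This contradicts $k<n$ and $m<n$.

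We expect the main care point to be the orientation of $\perp$ in the definition of $\rho_\mC$, namely which subspaces must have trivial $\mC(\cdot)$, together with applying the Singleton bound in the $n>m$ regime. Neither step is difficult, and the argument never needs the divisibility conditions of almost affine codes.
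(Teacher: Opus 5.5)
Your proof is correct and follows the paper's route: $K=km$, then $\mC(W)=0$ for every $(n-k)$-dimensional $W$ so that $d\ge n-k+1$, then the Singleton bound with $\max\{n,m\}=n$ forcing $(n-k)(n-m)\le 0$. The only differences are that you use just the lower bound on $d$ (the paper also shows $d=n-k+1$) and that you split on the sign of $m-n+k$ rather than on $k=1$ versus $k\ge 2$.

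One slip needs fixing. The equality $\rho_\mC(U)=\dim U$ holds exactly when $\dim U\le k$, not when $\dim U\le n-k$. For $k<\dim U\le n-k$ your formula would give a negative dimension, and for $k>n/2$ the case $\dim U=k$ lies outside your stated range. The repair is to evaluate directly at $\dim U=k$: this gives $\rho_\mC(U)=\min\{k,k\}=k$, hence $\dim\mC(U^\perp)=0$, which is all you use.
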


\begin{proof}
    Assume, for a contradiction, that $\mM$ is $\F_q^{n\times m}$-representable for some $m<n$. Then there exists an $\F_q$-$[n\times m,k^\prime,d]$ matrix rank-metric code $\mC$ such that, for all $U\in\mL(E)$, we have  
    \[
        \rho_\mC(U)=\frac{k^\prime-\dim(\C(U^\perp))}{m}=\rho_\mM(U).
    \]

    These equalities imply the following properties:
    \begin{enumerate}
        \item[(1)] The dimension of $\mC$ is $k^\prime=km$.
        \item[(2)] For all $U\in\mL(E)_{\geq k}$, we have $\dim(\C(U^\perp))=0$.
        \item[(3)] For all $U\in\mL(E)_{\leq k-1}$, we have
        $\dim(\C(U^\perp))=(k-\dim(U))m$.
    \end{enumerate}

    Indeed, (1) follows by evaluating the equality at $U=E$, since $\C(E^\perp)=\C(0)=\{0\}$ and $\rho_\mM(E)=k$. Using $k^\prime=km$, property (2) follows from the fact that $\rho_\mM(U)=k$ for all $U\in\mL(E)_{\geq k}$, while property (3) follows from the fact that $\rho_\mM(U)=\dim(U)$ for all $U\in\mL(E)_{\leq k-1}$.

    Property (2) implies that $\mC$ contains no nonzero matrix of rank at most $n-k$. Indeed, if $M\in\mC$ has rank at most $n-k$, then its column space is contained in some subspace of $E$ of dimension $n-k$, say $U^\perp$ with $\dim(U)=k$. This would imply $M\in\C(U^\perp)=\{0\}$, a contradiction.

    On the other hand, by property (3), for every $U\in\mL(E)_{k-1}$ we have $\dim(\C(U^\perp))=m>0$. Hence $\C(U^\perp)$ contains a nonzero matrix whose rank is at most $\dim(U^\perp)=n-k+1$. Since ranks at most $n-k$ are excluded, the  minimum distance of $\mC$ is $d=n-k+1$. In particular, $d\leq m$.

    If $k=1$, then $d=n$, and hence $n\leq m$, contradicting $m<n$. Suppose now that $2\leq k\leq n-1$. Since $m<n$, the Singleton bound gives
    \begin{align*}
        km=k^\prime
        &\leq \max\{n,m\}(\min\{n,m\}-d+1) \\
        &= n(m-(n-k+1)+1) \\
        &= n(m-n+k).
    \end{align*}
    Hence $km\leq nm-n(n-k)$, or equivalently $n(n-k)\leq m(n-k)$. Since $k<n$, this implies $n\leq m$, again contradicting $m<n$.

    Therefore, no such $\F_q$-$[n\times m,km,n-k+1]$ matrix rank-metric code exists for $m<n$. Hence $\mM$ is not $\F_q^{n\times m}$-representable for any $m<n$.
\end{proof}

The following corollary is an immediate consequence of \Cref{thm:uniform_not_multilinear} and \Cref{ex: uniform_representability}.

\begin{corollary}
    For $0<k<n$, the uniform $q$-matroid $\mU_{k,n}(q)$ is not purely $m$-multilinear, for any $m>1$.
\end{corollary}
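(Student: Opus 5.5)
The corollary follows by combining \Cref{thm:uniform_not_multilinear} with \Cref{ex: uniform_representability}. The plan is a case split on whether $m<n$ or $m\geq n$. Recall that purely $m$-multilinear means $\F_q^{n\times m}$-representable but not $\F_{q^m}$-representable. So for each $m>1$ it suffices to show that one of these two conditions fails.

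Fix $0<k<n$ and $m>1$.

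\textbf{Case $m<n$.} By \Cref{thm:uniform_not_multilinear}, $\mU_{k,n}(q)$ is not $\F_q^{n\times m}$-representable. Hence it is not $m$-multilinear at all, and in particular not purely $m$-multilinear.

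\textbf{Case $m\geq n$.} By \Cref{ex: uniform_representability}, $\mU_{k,n}(q)$ is $\F_{q^m}$-representable, via a generator matrix of an $\F_{q^m}$-linear MRD code. So the second requirement in the definition of pure multilinearity fails. Note that $\F_{q^m}$-representability implies $\F_q^{n\times m}$-representability, by expanding in an $\F_q$-basis of $\F_{q^m}$. Thus in this range the $q$-matroid is $m$-multilinear, but not purely so.

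The two cases exhaust all $m>1$, which proves the claim.

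There is no real obstacle, since all the work is in \Cref{thm:uniform_not_multilinear}. The only point needing care is to use the exact definition of ``purely $m$-multilinear'' and to note that the characterization in \Cref{ex: uniform_representability} ($\F_{q^m}$-representable if and only if $m\geq n$) holds for every $0<k<n$.
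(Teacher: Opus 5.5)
Your proof is correct and is the same argument the paper uses. The paper states the corollary as an immediate consequence of \Cref{thm:uniform_not_multilinear}, which rules out $\F_q^{n\times m}$-representability for $m<n$, and \Cref{ex: uniform_representability}, which gives $\F_{q^m}$-representability for $m\geq n$; this is exactly your case split.
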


\begin{remark}
    Another consequence of \Cref{thm:uniform_not_multilinear} and
    \Cref{ex: uniform_representability} is that every matrix-code representation of the uniform $q$-matroid $\mU_{k,n}(q)$, for $0<k<n$, must have $m\geq n$. Moreover, such representations exist for $m\geq n$ via $\F_{q^m}$-linear MRD codes. Thus \Cref{thm:uniform_not_multilinear} shows that $\mU_{k,n}(q)$ admits no purely multilinear representation. This differs from the classical matroid case. Indeed, it is known that the matroid $U_{2,4}$ is not $\F_2$-representable, but it has a $2$-multilinear representation over $\F_2$ given by the matrix
    \begin{equation*}
    G=\begin{pmatrix}
        10 & 00 & 10 & 10 \\
        01 & 00 & 01 & 01 \\
        00 & 10 & 10 & 01\\
        00 & 01 & 01 & 11
    \end{pmatrix}.
    \end{equation*}
    Moreover, $U_{2,4}$ arises as the induced matroid of the $q$-matroid
    $\mU_{2,4}(2)$, in the sense of \cite[Theorem~4.4]{gluesing2022q}; see also Theorem~\ref{thm: induced_mat}. Therefore,
    \Cref{thm:uniform_not_multilinear} provides evidence that multilinearity of an induced matroid of a $q$-matroid does not imply multilinearity of the $q$-matroid itself, similarly to what happens for classical representability; see
    \cite[Theorem~4.4]{gluesing2022q}.
\end{remark}

%%%%%%%%%%%%%%%%%%%%%%%%%%%%%%%%%%%%%%%%%%%%%%%%%%%%%%%%%%%%%%%%%%%%%%%%%%%%%%%%%%%%%%%%%%
\subsection{Almost uniform \emph{q}-matroids}\label{subsec: almost_uniform}

Let $E=\F_q^n$, for $n\geq 1$ and let $0<k<n$.  In this subsection, we consider a class of $q$-matroids that arise from the uniform case by removing one arbitrary basis. We then discuss their multilinearity.

\begin{definition}\label{def: almost_uniform}
    Let $X$ be a $k$-dimensional subspace of $E$ and $\mS=\{X\}$. We call the paving $q$-matroid induced by $\mS$ an \textbf{almost uniform $q$-matroid} and denote it by $\mA\mU_{k,n}(q,X)$. Equivalently, $\mA\mU_{k,n}(q,X)$ is the $q$-matroid whose bases are the elements of $\mL(E)_k\setminus\{X\}$.
\end{definition}

\begin{remark}
The subspace $X$ in \Cref{def: almost_uniform} is a circuit-hyperplane, that is, it is both a circuit and a hyperplane of $\mA\mU_{k,n}(q,X)$. Therefore, $\mA\mU_{k,n}(q,X)$ is a sparse paving $q$-matroid with exactly one circuit-hyperplane, in analogy with the classical notion of an almost uniform matroid. Here, as in the classical setting, \emph{sparse paving} means that both the $q$-matroid and its dual are paving.
\end{remark}

The next theorem determines the possible parameters of a matrix rank-metric code with $m<n$ that could yield a multilinear representation of $\mA\mU_{k,n}(q,X)$.

\begin{theorem}\label{thm: almost_uniform_not_mul}
Let $0<k<n-1$, let $m<n$, and let $\mM=\mA\mU_{k,n}(q,X)$ be an almost uniform $q$-matroid. If $\mM$ is represented by an $\F_q$-$[n\times m,k',d]$ matrix rank-metric code $\mC\subseteq\F_q^{n\times m}$, then $m=n-1$ and $\mC$ has parameters $\F_q$-$[n\times(n-1),k(n-1),n-k]$.
\end{theorem}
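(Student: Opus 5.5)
We mirror the proof of \Cref{thm:uniform_not_multilinear}. Suppose $\mC$ is an $\F_q$-$[n\times m,k',d]$ code with $m<n$ and $\mM[\mC]=\mA\mU_{k,n}(q,X)$, so that $\rho_\mC(U)=\frac{k'-\dim\C(U^\perp)}{m}$ agrees with $\rho_\mS$ for $\mS=\{X\}$. We first read off the dimensions of the shortened subcodes.

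\begin{itemize}
\item At $U=E$ we get $k'=km$.
\item Every $U$ of dimension at least $k$ other than $X$ has rank $k$, hence $\dim\C(U^\perp)=0$.
\item For $U=X$ the rank is $k-1$, hence $\dim\C(X^\perp)=m$.
\item For $\dim U\le k-1$ we get $\dim\C(U^\perp)=(k-\dim U)m$.
\end{itemize}

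Next we determine the minimum distance. Every subspace of dimension $n-k$ except $X^\perp$ has trivial $\C(\cdot)$. Hence every nonzero codeword of rank at most $n-k$ has column space inside $X^\perp$. In particular its column space is contained in no other $(n-k)$-space, so its rank is exactly $n-k$ with column space equal to $X^\perp$. Since $\dim\C(X^\perp)=m>0$, such codewords exist. There are no nonzero codewords of rank less than $n-k$: such a codeword would lie in some $(n-k)$-space different from $X^\perp$. This uses $n-k\ge 2$, i.e.\ $k<n-1$, so that a subspace of dimension less than $n-k$ lies in at least two $(n-k)$-spaces. Therefore $d=n-k$, and in particular $n-k\le m$.

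We now apply the Singleton bound \eqref{eq:Singleton} with $m<n$:
\[
km\le n(m-d+1)=n(m-n+k+1).
\]
This rearranges to $n(n-k-1)\le m(n-k)-km+km-\ldots$; more simply, $km\le nm-n(n-k-1)$ gives $n(n-k-1)\le m(n-k)$, so $m\ge \frac{n(n-k-1)}{n-k}=n-\frac{n}{n-k}$. This alone does not force $m=n-1$ when $n/(n-k)>1$, so a sharper argument is needed.

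\textbf{Sharper argument: puncturing.} We puncture away the obstruction. Pick a hyperplane $H\supseteq X^\perp$ that is suitably generic, or use a refined counting argument. The intended route is to consider the subcode $\C'=\{M\in\C: \colsp M\not\subseteq X^\perp\}\cup\{0\}$, or more concretely a complement of $\C(X^\perp)$ in $\C$. This has dimension $km-m=(k-1)m$, and all of its nonzero elements have rank at least $n-k+1$. But such a complement may still contain elements of rank $n-k$, namely elements of $\C(X^\perp)$ plus something. Instead we use the row-shortening at a line. Choose a 1-dimensional $u\not\subseteq X$ such that $X+u$ is a $(k+1)$-space, and consider the hyperplane $u^\perp$. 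The shortened code $\C(u^\perp)$ has dimension $(k-1)m$ by the last item of the list above. Viewing it as a code in $\F_q^{(n-1)\times m}$, its nonzero codewords have rank at least $n-k$. We then apply Singleton to it:
\[
(k-1)m\le \max\{n-1,m\}\bigl(\min\{n-1,m\}-n+k+1\bigr).
\]
If $m<n-1$, this gives $(k-1)m\le (n-1)(m-n+k+1)$, which rearranges to $(n-1)(n-k-1)\le m(n-k)$, and we still need to derive a contradiction. The hard step is exactly this: combining the two Singleton-type inequalities, possibly iterating the shortening down to dimension $n-k+\ldots$ subspaces (where rank $n-k$ forces MRD-like behaviour, as in the $k=1$ case of \Cref{thm:uniform_not_multilinear}), to conclude $m\ge n-1$. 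Indeed, iterating the shortening $k-1$ times along a flag of hyperplanes avoiding $X^\perp$ yields a code in $\F_q^{(n-k+1)\times m}$ of dimension $m$ with minimum distance $n-k$ and column span containing elements of rank $n-k+1$. Comparing with the $k=1$ argument should force $m\ge n-k+\ldots$. I expect the real content to be choosing the right shortening so that Singleton is tight.

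Once $m=n-1$ is established, the parameters follow from the above: $k'=k(n-1)$ and $d=n-k$.
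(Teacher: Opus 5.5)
Your derivation of $d=n-k$, and hence $n-k\le m$, is correct. However, the proposal never proves $m\ge n-1$, which is the actual content of the statement; you leave it open as ``the hard step''. The shortening you chose is in fact the right one. What you miss is that you underestimate its minimum distance.

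You already showed that every codeword of rank $n-k$ has column space exactly $X^\perp$. Since $u\not\subseteq X$ is equivalent to $X^\perp\not\subseteq u^\perp$, no such codeword lies in $\C(u^\perp)$. So every nonzero element of $\C(u^\perp)$ has rank at least $n-k+1$, not merely $n-k$.

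With this, take $k\ge 2$ and suppose $m<n-1$. The Singleton bound in $\F_q^{(n-1)\times m}$ gives $(k-1)m\le (n-1)(m-n+k)$, i.e.\ $(n-1)(n-k)\le (n-k)m$. Hence $m\ge n-1$, a contradiction. For $k=1$ the shortened code is zero and gives nothing, but there $d=n-1\le m$ already suffices. Conceptually, $\C(u^\perp)$ represents the contraction of $\mM$ by $u$, which is $\mU_{k-1,n-1}(q)$, so this is \Cref{thm:uniform_not_multilinear} in disguise.

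The alternatives you float do not work:
\begin{itemize}
\item A hyperplane $H\supseteq X^\perp$ gives $\C(X^\perp)\subseteq\C(H)$, so it keeps exactly the rank-$(n-k)$ codewords you want to avoid.
\item Iterating the shortening down to an $m$-dimensional code in $\F_q^{(n-k+1)\times m}$ lets Singleton yield at best $m\ge n-k+1$. This is too weak once $k\ge 3$.
\end{itemize}

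For comparison, the paper uses the dual operation. It punctures $\C$ at a line $e$ with $X\not\subseteq e^\perp$, obtaining a code in $\F_q^{(n-1)\times m}$ that represents the restriction of $\mM$ to $e^\perp$. Since $X$ is not a subspace of $e^\perp$, this restriction is $\mU_{k,n-1}(q)$ with $0<k<n-1$. Then \Cref{thm:uniform_not_multilinear} gives $m\ge n-1$ at once, for every $k$ in the range and without a separate case $k=1$.
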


\begin{proof}
    We denote by $\rho_\mM$ the $q$-rank function of $\mM=\mA\mU_{k,n}(q,X)$. Let $\mC$ be an $\F_q$-$[n\times m,k^\prime,d]$ matrix rank-metric code such that for all $U\in\mL(E)$ we have  
    \[
        \rho_\mC(U)=\frac{k^\prime-\dim(\C(U^\perp))}{m}=\rho_\mM(U).
    \]

   These equalities imply the following properties:
\begin{enumerate}
    \item[(1)] The dimension of $\mC$ is $k'=km$.
    \item[(2)] For all $U\in\mL(E)_{\geq k}\setminus\{X\}$, we have
    $\dim(\C(U^\perp))=0$.
    \item[(3)] For $U=X$, we have $\dim(\C(X^\perp))=m$.
    \item[(4)] For all $U\in\mL(E)_{\leq k-1}$, we have
    $\dim(\C(U^\perp))=(k-\dim(U))m$.
\end{enumerate}
Indeed, (1) follows by evaluating the equality at $U=E$, since
$\C(E^\perp)=\C(0)=\{0\}$ and $\rho_\mM(E)=k$. Using $k'=km$, property (2) follows from the fact that $\rho_\mM(U)=k$ for all
$U\in\mL(E)_{\geq k}\setminus\{X\}$. Property (3) follows from
$\rho_\mM(X)=k-1$, and property (4) follows from
$\rho_\mM(U)=\dim(U)$ for all $U\in\mL(E)_{\leq k-1}$.

Property (2), applied to subspaces $U\in\mL(E)_{k+1}$, implies that $\mC$ contains no nonzero matrix of rank at most $n-k-1$. Indeed, if $M\in\mC$ has rank at most $n-k-1$, then $\colsp_{\F_q}(M)$ is contained in some subspace of dimension $n-k-1$, say $U^\perp$ with $\dim(U)=k+1$. This gives $M\in\C(U^\perp)=\{0\}$.

On the other hand, by property (3), $\C(X^\perp)$ is nonzero. Hence $\mC$ contains a nonzero matrix whose column space is contained in $X^\perp$, and therefore whose rank is at most $\dim(X^\perp)=n-k$. Since ranks at most $n-k-1$ are excluded, the minimum
distance of $\mC$ is $d=n-k$. In particular, $n-k\leq m<n$.

Now choose a one-dimensional subspace $e\subseteq E$ such that $X\not\subseteq e^\perp$. By \cite[Theorem~5.5]{gluesing2022q}, puncturing $\mC$ with respect to $e$ gives a matrix rank-metric code in $\F_q^{(n-1)\times m}$ representing the restriction of $\mM$ to $e^\perp$. Since $X\not\subseteq e^\perp$, this restriction is the uniform $q$-matroid of rank $k$ on the $(n-1)$-dimensional space $e^\perp$.
Since $0<k<n-1$, \Cref{thm:uniform_not_multilinear} applies to this uniform $q$-matroid. Hence no such punctured matrix rank-metric code exists for $m<n-1$. Together with $m<n$, this forces $m=n-1$.
Therefore $k'=km=k(n-1)$ and $d=n-k$. Hence $\mC$ has parameters $\F_q$-$[n\times(n-1),k(n-1),n-k]$.
\end{proof}

\begin{remark}
The proof of Theorem~\ref{thm: almost_uniform_not_mul} actually shows a little more. If the assumption $m<n$ is removed, the same argument implies that any matrix rank-metric code representation of $\mA\mU_{k,n}(q,X)$, for $0<k<n-1$, must satisfy $m\geq n-1$. The hypothesis $m<n$ is only used to conclude that $m=n-1$. In the range $m\geq n$, however, the argument does not address the existence of such representations, nor whether they can be chosen to be
$\F_{q^m}$-linear.
\end{remark}

%%%%%%%%%%%%%%%%%%%%%%%%%%%%%%%%%%%%%%%%%%%%%%%%%%%%%%%%%%%%%%%%%%%%%%%%%%%%%%%%%%%%%%%%%%
\section{The non-Pappus \emph{q}-matroid}\label{sec: q_non_pappus}

In this section, we discuss the multilinearity of the $q$-analogue of the non-Pappus matroid, which was introduced in \cite{gluesing2022q}. In the classical matroid case, the non-Pappus matroid is known to be multilinear, for instance over $\F_3$ and over the field of rational numbers. It is therefore natural to ask whether its $q$-analogue is multilinear as well. It is known, however, that the non-Pappus $q$-matroid is not $\F_{q^m}$-representable for any $m$. This follows from \Cref{thm: induced_mat},
since its induced matroid is the classical non-Pappus matroid, which is not representable over any field; see \cite[Example~4.8]{gluesing2022q}.

The following definition is taken from~\cite[Section~4]{gluesing2022q} and \Cref{prop: paving_construction}.

\begin{definition}
    Let $E=\F_q^9$ and $\mathbb{B}=\{e_1,\ldots,e_9\}$ be the standard basis of $E$. Consider the set $S$ of $3$-element subsets of $\mathbb{B}$ given by
    \[
       S:=\{123,168,157,247,269,348,359,456\},
    \]
    where, for $1\leq i<j<k\leq 9$, the expression $ijk$ denotes the subset $\{e_i,e_j,e_k\}\subseteq\mathbb{B}$. We define the set $\mS\subseteq\mL(E)_3$ via:
    \[
        \mS:=\{\la A\ra_{\F_q}\;|\; A\in S\}.
    \]
    Then $\mS$ satisfies the condition in \Cref{prop: paving_construction} for $k=3$. We call the resulting paving $q$-matroid of rank $3$ the \textbf{non-Pappus $q$-matroid} and denote it by $\mN\mP(q)$. 
\end{definition}

In particular, the rank function of $\mN\mP(q)$ is given, for all $U\in\mL(E)$, by
\begin{equation}\label{eq:rank_non_pappus}
    \rho(U)=
    \begin{cases}
       \dim(U) & \textnormal{if } U\in\mL(E)_{\leq 3}\setminus \mS,\\
       2 & \textnormal{if } U\in \mS,\\
       3 & \textnormal{otherwise.}
    \end{cases}
\end{equation}

The following result partially answers the above question about the multilinearity of the non-Pappus $q$-matroid.

In order to prove it, we recall how the rank distribution of a code $\mC$ is related to the rank distribution of the dual code $\mC^\perp$ via the so-called MacWilliams-type identities. The following identity was proved in \cite[Theorem~30]{CGLR18} and will be a crucial tool in what follows.

\begin{definition}\label{def: rank_distribution}
   Let $\mC\subseteq\F_q^{n\times m}$ be an $\F_q$-linear rank-metric code. The \textbf{rank distribution} of $\mC$ is the sequence $(A_i(\mC))_{i\in\mathbb{N}}$,
where
    \[
        A_i(\mC):=|\{M\in\mC\;|\;\text{rk}(M)=i\}|,
    \]
    for all $i\in\mathbb{N}$.
\end{definition}

\begin{theorem}\label{thm: MacWil_identity}
Let $\mC\subseteq\F_q^{n\times m}$ be an $\F_q$-linear rank-metric code of dimension $t$, and set $N:=\min\{m,n\}$ and $M:=\max\{m,n\}$. Then for every $0\le r\le N$ we have
$$\sum_{i=0}^{N} q^{(N-i)r}A_i(\C)
=
\sum_{j=0}^{r} A_j(\C^\perp)
\left(
\sum_{\nu=j}^{r}
q^{\,t-M\nu}
\binom{N-j}{\nu-j}_q
\binom{r}{\nu}_q
\prod_{s=0}^{\nu-1}(q^\nu-q^s)
\right).
$$
\end{theorem}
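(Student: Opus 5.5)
The plan is a double-counting argument on the left-hand side, followed by a duality lemma for the subcodes $\mC(U)$ of Definition~\ref{def:codepoly}.

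\emph{Reduction to $N=n$.} Transposition preserves rank, maps $\F_q^{n\times m}$ onto $\F_q^{m\times n}$, and satisfies $(\mC^\top)^\perp=(\mC^\perp)^\top$, since $\mathrm{Tr}(MN^\top)=\mathrm{Tr}(M^\top N)$. Both rank distributions are therefore unchanged, and I may assume $n\le m$, so that $N=n$ and $M=m$.

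\emph{Duality lemma.} For a subspace $U\subseteq\F_q^n$ of dimension $u$, put $V_U:=\{A\in\F_q^{n\times m}:\colsp(A)\subseteq U\}$. Then $\dim V_U=mu$, and a short check with the trace form gives $V_U^\perp=V_{U^\perp}$. Since $\mC(U)=\mC\cap V_U$,
\[
\dim\mC(U)=t+mu-\dim(\mC+V_U)=t+mu-nm+\dim\bigl(\mC^\perp\cap V_{U^\perp}\bigr).
\]
Taking $U=W^\perp$ with $\dim W=\nu$, this becomes $|\mC(W^\perp)|=q^{\,t-m\nu}\,|\mC^\perp(W)|$.

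\emph{Double counting.} Fix $0\le r\le N$. For $A\in\mC$ of rank $i$, the space $\colsp(A)^\perp$ has dimension $n-i$. Hence $q^{(N-i)r}$ is the number of linear maps $\varphi:\F_q^r\to\F_q^n$ with image in $\colsp(A)^\perp$. The left-hand side therefore counts pairs $(A,\varphi)$ with $A\in\mC$ and $\operatorname{im}\varphi\subseteq\colsp(A)^\perp$, that is, with $A\in\mC(W^\perp)$ where $W:=\operatorname{im}\varphi$. I group these pairs by $W$, of dimension $\nu\le r$. The number of surjections $\F_q^r\to W$ is $\binom{r}{\nu}_q\prod_{s=0}^{\nu-1}(q^\nu-q^s)$, so
\[
\sum_{i}q^{(N-i)r}A_i(\mC)=\sum_{\nu=0}^{r}\binom{r}{\nu}_q\prod_{s=0}^{\nu-1}(q^\nu-q^s)\sum_{\dim W=\nu}|\mC(W^\perp)|.
\]
Applying the duality lemma replaces $|\mC(W^\perp)|$ by $q^{t-M\nu}|\mC^\perp(W)|$.

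\emph{Final swap.} Expanding $|\mC^\perp(W)|$ as a sum over $B\in\mC^\perp$ with $\colsp(B)\subseteq W$, I interchange the order of summation. A fixed $B$ of rank $j$ lies in exactly $\binom{N-j}{\nu-j}_q$ subspaces $W$ of dimension $\nu$, namely those containing its $j$-dimensional column space. Summing over $B$ gives the factor $A_j(\mC^\perp)$, and only $j\le\nu\le r$ contributes, which yields exactly the stated right-hand side.

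The main obstacle is the duality lemma, specifically verifying $V_U^\perp=V_{U^\perp}$ and keeping the exponent bookkeeping straight. The remaining steps are routine counting.
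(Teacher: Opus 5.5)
Your proof is correct and complete. The paper does not prove this identity at all: it imports it from \cite[Theorem~30]{CGLR18} and uses it only with $r=1$ in the proof of \Cref{thm: qnon_pappus_multilin}. What you give is therefore a self-contained derivation in place of a citation.

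Your derivation needs almost nothing from outside the paper. After unwinding Definitions~\ref{def:codepoly} and~\ref{def: qPM_dual}, your duality lemma $\dim\mC(W^\perp)=t-M\nu+\dim\mC^\perp(W)$ is exactly the statement $\mM[\mC]^*=\mM[\mC^\perp]$ of \cite[Theorem~8.1]{gorla2019rank}. The paper already quotes that result, so you could cite it instead of verifying $V_U^\perp=V_{U^\perp}$.

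The rest of your argument is the expansion $q^{(N-i)r}=\sum_{\nu}\binom{N-i}{\nu}_q\prod_{s=0}^{\nu-1}(q^r-q^s)$, obtained by sorting maps according to their image. This shows the stated identity is just the binomial-moment form $\sum_i A_i(\mC)\binom{N-i}{\nu}_q=q^{t-M\nu}\sum_j A_j(\mC^\perp)\binom{N-j}{\nu-j}_q$, reweighted by surjection counts. It is the same kind of double counting of (codeword, subspace) pairs that the paper does by hand in \Cref{thm: qnonpappus_weight_distribution_mgeq9}.

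Your transposition step is not cosmetic. For $n>m$, the same count run on column spaces in $\F_q^n$ proves the companion identity with $(n,m)$ in place of $(N,M)$. That identity is also true, but it is not the stated one; the reduction is exactly what places the supports in the smaller ambient space.
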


\begin{theorem}\label{thm: qnon_pappus_multilin}
The non-Pappus $q$-matroid is not $m$-multilinear for any $m\leq 8$. 
\end{theorem}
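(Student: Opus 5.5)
Suppose, for a contradiction, that $\mN\mP(q)=\mM[\mC]$ for some $\F_q$-$[9\times m,k']$ code $\mC$ with $m\leq 8$. The plan is to pin down the low-rank part of the rank distribution of $\mC$ from the rank function \eqref{eq:rank_non_pappus}, do the same for $\mC^\perp$ via duality, and then show that the MacWilliams identities of \Cref{thm: MacWil_identity} are inconsistent.

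First, as in the proofs of \Cref{thm:uniform_not_multilinear} and \Cref{thm: almost_uniform_not_mul}, evaluating at $U=E$ gives $k'=3m$. The rank function then gives the following values.
\begin{itemize}
\item $\C(U^\perp)=0$ for every $U$ with $\dim U\geq 4$, and for every $U\in\mL(E)_3\setminus\mS$.
\item $\dim\C(X^\perp)=m$ for each of the eight $X\in\mS$.
\end{itemize}
Any nonzero $M\in\mC$ of rank at most $5$ has column space inside some $U^\perp$ with $\dim U=4$. That space is zero, so $A_1(\C)=\cdots=A_5(\C)=0$. A matrix of rank exactly $6$ has column space $U^\perp$ with $\dim U=3$, and this is nonzero only when $U\in\mS$. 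For distinct $X,Y\in\mS$ we have $\C(X^\perp)\cap\C(Y^\perp)=\C((X+Y)^\perp)=0$, since $\dim(X+Y)\geq 4$. Hence
\[
A_6(\C)=8(q^m-1).
\]
In particular $d(\C)=6\leq m$, which rules out $m\leq 5$. Only $m\in\{6,7,8\}$ remain, and then $N=m$, $M=9$ in \Cref{thm: MacWil_identity}.

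Next I compute the low ranks of $\mC^\perp$. By \cite[Theorem~8.1]{gorla2019rank}, $\mM[\C^\perp]=\mM^*$. Unwinding \Cref{def: qPM_dual} gives, for every $W\in\mL(E)$,
\[
\dim\C^\perp(W)=m\bigl(\dim W-\rho(W)\bigr),
\]
that is, $m$ times the nullity of $W$ in $\mN\mP(q)$. Since every subspace of dimension at most $2$ is independent, we get $A_1(\C^\perp)=A_2(\C^\perp)=0$. Consequently, for $r\in\{0,1,2\}$ the right-hand side of \Cref{thm: MacWil_identity} involves only the term $j=0$ with $A_0(\C^\perp)=1$, so it is an explicit number.

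The left-hand side is
\[
q^{mr}+8(q^m-1)q^{(m-6)r}+\sum_{i=7}^{m}q^{(m-i)r}A_i(\C).
\]
Its only unknowns are $A_7(\C)$ and $A_8(\C)$: none for $m=6$, one for $m=7$, and two for $m=8$. The three equations for $r=0,1,2$ therefore form an overdetermined linear system in at most two unknowns. The aim is to show in each case that the system is inconsistent, or that its solution is negative or non-integral. If needed, $r=3$ can be added; this uses $A_3(\C^\perp)=8(q^m-1)$, obtained from the nullity-one $3$-spaces in $\mS$ as above.

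The main obstacle is this final computation: evaluating the Gaussian-binomial sums symbolically in $q$ for $m=6,7,8$ and exhibiting the contradiction uniformly in $q$. I would first treat $m=6$, where no unknowns remain and a single identity, presumably $r=1$, should already fail. Then $m=7$ and $m=8$, eliminating $A_7$ and $A_8$ from the $r=0,1$ equations and substituting into $r=2$. For each case I would compare the leading powers of $q$ to certify the nonvanishing of the resulting polynomial in $q$.
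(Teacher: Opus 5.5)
Your plan is correct, and it departs from the paper at the decisive step.

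\textbf{The paper's route.} The paper excludes $m\le 5$ exactly as you do, and then excludes $m=6,7$ by the Singleton bound. For $m=8$ it computes $A_7(\mC)=(q^8-1)\bigl(\binom{9}{2}_q-8\binom{3}{1}_q\bigr)$ geometrically, by counting the $7$-dimensional column spaces that contain no $X^\perp$, and obtains $A_8(\mC)$ from $|\mC|=q^{24}$. It then uses the single identity with $r=1$ to solve for $A_1(\mC^\perp)=(q^8-1)P(q)/q^{13}$, and shows this is not an integer by a divisibility argument, with a separate check for $q=7$.

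\textbf{Your route.} You skip both the Singleton bound and the geometric count of $A_7$. Duality gives $\dim\mC^\perp(W)=m(\dim W-\rho(W))$, hence $A_1(\mC^\perp)=A_2(\mC^\perp)=0$. The identities for $r\le 2$ then become an overdetermined system with explicit right-hand sides. The computation you left open does close:
\begin{itemize}
\item For $m=6$, the identity with $r=0$ alone reads $q^{18}=8q^6-7$.
\item For $m=7$, the identities with $r=0,1$ reduce to $q^{12}=8q-7$.
\item For $m=8$, the identities with $r=0,1$ force $A_7(\mC)=(q^8-1)\bigl(\frac{q^{15}-1}{q-1}-8(q+1)\bigr)$. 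Substituting into $r=2$ leaves $q^{14}-q^8-q^7-8q^4+8q^3+8q^2-7q=0$, which is impossible for $q\geq 2$.
\end{itemize}

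\textbf{Comparison.} Your observation that $A_1(\mC^\perp)=0$ would also shorten the paper's own ending. It reduces the final step to $P(q)\neq 0$, which is clear since $P(q)>0$, so the number-theoretic case split becomes unnecessary. In exchange, your route needs one more MacWilliams identity and heavier Gaussian-binomial bookkeeping. The paper's geometric count, on the other hand, yields the full rank distribution of a putative representation, which it reuses in \Cref{thm: qnonpappus_weight_distribution_mgeq9} for $m\geq 9$.
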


\begin{proof}
Let $\rho_\mM$ be the $q$-rank function of $\mM=\mN\mP(q)$. Suppose, for a contradiction,
that $\mN\mP(q)$ is $m$-multilinear for some $m\leq 8$. Then there exists an $\F_q$-$[9\times m,k',d]$ rank-metric code $\mC$ such that, for all $U\in\mL(E)$, we have
\begin{equation*}
    \rho_\mC(U)=\frac{k'-\dim(\mC(U^\perp))}{m}=\rho_\mM(U).
\end{equation*}

   Using \eqref{eq:rank_non_pappus}, we obtain the following properties:
\begin{enumerate}
    \item[(1)] $\dim(\mC)=k'=3m$.
    \item[(2)] For all $U\in\mL(E)_{\geq 3}\setminus\mS$, we have
    $\dim(\C(U^\perp))=0$.
    \item[(3)] For all $U\in\mS$, we have $\dim(\C(U^\perp))=m$.
    \item[(4)] For all $U\in\mL(E)_{\leq 2}$, we have
    $\dim(\C(U^\perp))=(3-\dim(U))m$.
\end{enumerate}
Indeed, (1) follows by evaluating at $U=E$, since $\rho_\mM(E)=3$ and
$\C(E^\perp)=\C(0)=\{0\}$. Using $k'=3m$, property (2) follows from the fact that $\rho_\mM(U)=3$ for all $U\in\mL(E)_{\geq 3}\setminus\mS$. Property (3) follows from $\rho_\mM(U)=2$ for $U\in\mS$, and property (4) follows from $\rho_\mM(U)=\dim(U)$ for all $U\in\mL(E)_{\leq 2}$.
    
By (2), for every $U\in\mL(E)_4$ we have
$\C(U^\perp)=\{M\in\C:\colsp_{\F_q}(M)\subseteq U^\perp\}=\{0\}$. Hence $\mC$ contains no nonzero matrix of rank at most $5$. Indeed, if $M\in\mC$ had rank at most $5$, then $\colsp_{\F_q}(M)$ would be contained in some $5$-dimensional subspace, say $U^\perp$ with $\dim(U)=4$, contradicting $\C(U^\perp)=\{0\}$.

On the other hand, by (3), for every $U\in\mS$ the space $\C(U^\perp)$ is nonzero.
Since $\dim(U^\perp)=6$, this gives nonzero matrices in $\mC$ of rank at most $6$. As ranks at most $5$ are excluded, the minimum rank distance of $\mC$ is $d=6$. Therefore, $m\geq 6$, and so no $m$-multilinear representation exists for $m\leq 5$.

For $m\in\{6,7\}$, the Singleton bound gives a contradiction. Indeed, since $n=9$ and $m<9$, we obtain
\begin{equation*}
    k'\leq \max\{9,m\}(\min\{9,m\}-d+1)
    \quad\Longrightarrow\quad
    3m\leq 9(m-6+1).
\end{equation*}
For $m=6$, this gives $18\leq 9$, and for $m=7$, it gives $21\leq 18$, both contradictions.

It remains to exclude the case $m=8$. Assume that there exists an $8$-multilinear representation of $\mN\mP(q)$. Then it is given by an $\F_q$-$[9\times 8,24,6]$ rank-metric code $\mC$ satisfying properties (1)--(4).

We compute the rank distribution of $\mC$. Clearly, $A_0(\mC)=1$ and
$A_1(\mC)=\cdots=A_5(\mC)=0$, since the minimum rank distance of $\mC$ is $6$. Moreover, every rank-$6$ matrix $M\in\mC$ has column space equal to $X^\perp$ for some $X\in\mS$. Indeed, if $\rk(M)=6$ and $V=\colsp_{\F_q}(M)$, then
$\dim(V^\perp)=3$, and properties (2) and (3) force $V^\perp\in\mS$.

For each $X\in\mS$, property (3) gives  $\dim(\C(X^\perp))=8$, and all nonzero matrices in $\C(X^\perp)$ have rank $6$. Since distinct spaces in $\mS$ have intersection of dimension at most $1$, the spaces $\C(X^\perp)$ intersect pairwise trivially. Therefore
\begin{equation*}
    A_6(\mC)=8(q^8-1).
\end{equation*}

To compute $A_7(\mC)$, we notice that a matrix of rank $7$ has column space of dimension $7$. We claim that any $7$-dimensional subspace $W\le E$ contains at most one $X^\perp$, with $X\in\mS$.
Indeed, for distinct $X,Y\in\mathcal S$ we have $\dim(X\cap Y)\le 1$, by construction. Hence 
\[
\dim(X^\perp\cap Y^\perp)=\dim((X+Y)^\perp)\le 4.
\]
If $W$ contained both $X^\perp$ and $Y^\perp$, then $W^\perp \subseteq X\cap Y$, which is impossible since $\dim(W^\perp)=2$ while $\dim(X\cap Y)\le 1$. Fix a $7$-dimensional space $W\le E$. Then $\dim \C(W)=8$, by (4), so $|\C(W)|=q^8$.
If $W$ contains no $X^\perp$, for $X\in\mS$, then $\C(W)$ contains no nonzero codeword of rank $\le 6$, hence all nonzero elements of $\C(W)$ have rank $7$. Thus $W$ contributes $q^8-1$ to $A_7(\mC)$.
If $W$ contains some $X^\perp$, for $X\in\mS$, then $\C(W)=\C(X^\perp)$, and all nonzero codewords have rank $6$. Hence $W$ contributes $0$ to $A_7(\mC)$. Then, we have to count the $7$-spaces containing no $X^\perp$, for $X\in\mS$.
The total number of $7$-spaces is $\binom{9}{7}_q=\binom{9}{2}_q.$
For fixed $X\in\mS$, the number of $7$-spaces containing $X^\perp$ is $\binom{3}{1}_q$.
Since there are $8$ such $X$ and no $7$-space contains two of them, the number of $7$-spaces containing some $X^\perp$ is $8\binom{3}{1}_q$.
Therefore, the number of $7$-spaces containing none is $\binom{9}{2}_q-8\binom{3}{1}_q$, and 
$$ A_7(\mC)=(q^8-1)\left(\binom{9}{2}_q-8\binom{3}{1}_q\right).$$
Consequently, any other matrix in $\mC$ must have rank $8$, and hence we obtain that 
$$ A_8(\mC) = q^{24} - A_6(\mC) - A_7(\mC) -1.$$

We now apply the MacWilliams identity from \Cref{thm: MacWil_identity} with $r=1$. Since here $N=8$, $M=9$, and $t=24$, we obtain
\begin{align*}
\sum_{i=0}^{8} q^{8-i}A_i(\mC)
&=
\sum_{j=0}^{1} A_j(\mC^\perp)
\left(
\sum_{\nu=j}^{1}
q^{24-9\nu}
\binom{8-j}{\nu-j}_q
\binom{1}{\nu}_q
\prod_{s=0}^{\nu-1}(q^\nu-q^s)
\right)\\
& = q^{24}+q^{15}(q^8-1)+q^{15}(q-1)A_1(\mC^\perp).
\end{align*}

On the other hand, using the rank distribution of $\mC$ that we have computed above, we obtain
$$
\sum_{i=0}^8 q^{8-i}A_i(\mC)
=
q^{24}+(q^8-1)+(q^2-1)A_6(\mC)+(q-1)A_7(\mC).
$$

Substituting the values of  $A_6(\mC)$ and $A_7(\mC)$, we obtain
$$
q^{15}(q-1)A_1(\mC^\perp)
=
(q^8-1)\Bigl(
8(q^2-1)+1+(q-1)\bigl(\tbinom{9}{2}_q-8\tbinom{3}{1}_q\bigr)-q^{15}
\Bigr).$$
By simplifying, we obtain that the right-hand side equals
$$
(q^8-1)\,q^2(q-1)\,P(q),
$$
where
$$
P(q)=
q^{10}+q^9+2q^8+2q^7+3q^6+3q^5+3q^4+2q^3+2q^2+q-7.$$

Hence
$$
A_1(\mC^\perp)=\frac{(q^8-1)P(q)}{q^{13}}.$$
Since $\gcd(q^8-1,q)=1$, for $A_1(\mC^\perp)$ to be an integer, we need to have $q^{13}\mid P(q)$. We distinguish between two cases.
If $q\neq 7$, then reducing modulo $q$ yields $$P(q)\equiv -7 \pmod q,$$ so $q\nmid P(q)$, a contradiction. If $q=7$, then reducing modulo $7^3$ gives $P(7)\equiv 98 \pmod{7^3}$, so $7^3\nmid P(7)$, and hence $7^{13}\nmid P(7)$. Thus $A_1(\mC^\perp)$ is not an integer, which is impossible.

Therefore no rank-metric code $\mC\subseteq\F_q^{9\times 8}$ satisfying the above conditions exists. Hence $\mN\mP(q)$ is not $8$-multilinear. Combining this with the previous exclusions for $m\leq 7$, we conclude that $\mN\mP(q)$ is not $m$-multilinear for any $m\leq 8$.
\end{proof}

\Cref{thm: qnon_pappus_multilin} shows that, in order to find a multilinear representation of the non-Pappus $q$-matroid, one has to consider $\F_q$-$[9\times m,3m,6]$ rank-metric codes with $m\geq 9$. In the next result, we determine the rank distribution that such a putative code representing $\mN\mP(q)$ must have, if it exists, for any $m\geq 9$.

\begin{theorem}\label{thm: qnonpappus_weight_distribution_mgeq9}
Assume that the non-Pappus $q$-matroid $\mN\mP(q)$ is $m$-multilinear for some $m\geq 9$, and let $\mC$ be an $\F_q$-$[9\times m,3m,d]$ representing matrix rank-metric code. Then $d=6$, and the rank distribution of $\mC$ is given by 
\begin{gather*}
A_0(\mC)=1,\qquad A_1(\mC)=\cdots=A_5(\mC)=0,\\
A_6(\mC)=8(q^m-1),\qquad
A_7(\mC)=(q^m-1)\left(\binom{9}{2}_q-8\binom{3}{1}_q\right),\\
A_8(\mC)=\binom{9}{1}_q(q^{2m}-1)-\binom{3}{1}_qA_6(\mC)-(q+1)A_7(\mC),\\
A_9(\mC)=q^{3m}-1-A_6(\mC)-A_7(\mC)-A_8(\mC).
\end{gather*}
\end{theorem}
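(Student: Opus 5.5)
We reuse the first half of the proof of \Cref{thm: qnon_pappus_multilin}, which never used $m\le 8$. From \eqref{eq:rank_non_pappus} we again obtain properties (1)--(4): $k'=3m$; $\C(U^\perp)=0$ for $U\in\mL(E)_{\geq 3}\setminus\mS$; $\dim\C(X^\perp)=m$ for $X\in\mS$; and $\dim\C(U^\perp)=(3-\dim U)m$ for $\dim U\le 2$. Applying (2) to $4$-dimensional $U$ excludes nonzero codewords of rank at most $5$, and (3) produces codewords of rank at most $6$, so $d=6$. This gives $A_0=1$ and $A_1=\dots=A_5=0$.

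For $A_6$ and $A_7$ we copy the $m=8$ argument with $8$ replaced by $m$. A rank-$6$ codeword has column space $V$ with $V^\perp$ three-dimensional, and (2)--(3) force $V^\perp\in\mS$. Conversely, every nonzero element of $\C(X^\perp)$ has rank exactly $6$. These spaces intersect pairwise trivially: two distinct $X^\perp,Y^\perp$ meet in dimension at most $4$, and $\C$ has no nonzero codewords of rank $\le 4$. Hence $A_6=8(q^m-1)$. For rank $7$, every $7$-space $W$ has $\dim\C(W)=m$ by (4). As shown earlier, $W$ contains at most one $X^\perp$. If $W\supseteq X^\perp$, then $\C(W)=\C(X^\perp)$, since both have dimension $m$. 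Otherwise all nonzero elements of $\C(W)$ have rank $7$. Each rank-$7$ codeword lies in exactly one $\C(W)$, namely $W$ equal to its column space. Counting the $7$-spaces that contain no $X^\perp$ gives $A_7=(q^m-1)\bigl(\binom{9}{2}_q-8\binom{3}{1}_q\bigr)$.

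For $A_8$ we use a double count over the $8$-dimensional spaces $H$ (hyperplanes $H=u^\perp$ with $u$ a point). By (4), $|\C(H)|=q^{2m}$ for each such $H$. Summing $|\C(H)|-1$ over the $\binom{9}{1}_q$ hyperplanes counts each nonzero codeword $M$ once for every hyperplane containing $\colsp(M)$, and there are $\binom{9-\rk M}{1}_q$ of these. So
\[
\binom{9}{1}_q(q^{2m}-1)=\binom{3}{1}_qA_6+\binom{2}{1}_qA_7+A_8 ,
\]
and $\binom{2}{1}_q=q+1$ gives the stated formula for $A_8$. Finally, $A_9$ is obtained by subtracting from the total count $|\C|=q^{3m}$. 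Since $m\ge 9$, rank $9$ is possible and no codeword has rank larger than $9$.

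The main point to verify carefully is this incidence count. It relies on (4) holding for all $1$-dimensional $U$, and on the fact that the multiplicity depends only on the rank. There is no genuine obstacle beyond bookkeeping. One could cross-check the result against \Cref{thm: MacWil_identity} with $r=1$, but that is not needed.
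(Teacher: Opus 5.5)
Your proposal is correct and follows essentially the same route as the paper: reuse properties (1)--(4) to get $d=6$, obtain $A_6$ and $A_7$ exactly as in the $m=8$ case, derive $A_8$ by double counting pairs (nonzero codeword, $8$-dimensional space containing its column space), and get $A_9$ from $|\mC|=q^{3m}$. You spell out two steps the paper leaves implicit: the trivial pairwise intersections of the spaces $\mC(X^\perp)$ (via the bound $\dim(X^\perp\cap Y^\perp)\le 4$), and the fact that each rank-$7$ codeword lies in exactly one $\mC(W)$. These add detail but do not change the argument.
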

\begin{proof}
By the proof of \Cref{thm: qnon_pappus_multilin}, the code $\mC$ satisfies the following properties:
\begin{enumerate}
    \item[(1)] $\dim(\mC)=3m$.
    \item[(2)] For all $U\in\mL(E)_{\geq 3}\setminus\mS$, we have
    $\dim(\mC(U^\perp))=0$.
    \item[(3)] For all $U\in\mS$, we have $\dim(\mC(U^\perp))=m$.
    \item[(4)] For all $U\in\mL(E)_{\leq 2}$, we have
    $\dim(\mC(U^\perp))=(3-\dim(U))m$.
\end{enumerate}
As in the proof of \Cref{thm: qnon_pappus_multilin}, these properties imply that $d=6$, and hence $A_0(\mC)=1$ and $A_1(\mC)=\cdots=A_5(\mC)=0$. Moreover, the same argument gives
\[
    A_6(\mC)=8(q^m-1)
\]
and
\[
    A_7(\C)=(q^m-1)\left(\binom{9}{2}_q-8\binom{3}{1}_q\right).
\]
To determine $A_8(\mC)$, we count codewords according to the number of hyperplanes of $E$ that contain their column space. This allows us to express the total contribution of rank-$6$, rank-$7$, and rank-$8$ codewords to all spaces $\mC(W)$, where $W\in\mL(E)_8$. We double count the pairs $(M,W)$, where $W\subseteq E$ has dimension $8$ and $0\neq M\in\mC(W)$.

First, fix an $8$-dimensional subspace $W\subseteq E$. Then $\dim(W^\perp)=1$, and property (4), applied to $W^\perp$, gives
\begin{equation*}
    \dim(\mC(W))
    =
    \dim(\mC((W^\perp)^\perp))
    =
    (3-\dim(W^\perp))m
    =
    2m.
\end{equation*}
Thus $|\mC(W)|=q^{2m}$, so the number of nonzero elements of $\mC(W)$ is
$q^{2m}-1$. Since the number of $8$-dimensional subspaces of $E$ is
$\binom{9}{8}_q=\binom{9}{1}_q$, counting first by $W$ gives
\begin{equation*}
    \binom{9}{1}_q(q^{2m}-1).
\end{equation*}

We now count the same pairs by fixing the nonzero codeword $M$. Let $r=\rk(M)$. Then $M\in\mC(W)$ if and only if $\colsp_{\F_q}(M)\subseteq W$. Since $\dim(\colsp_{\F_q}(M))=r$, the number of $8$-dimensional subspaces $W\subseteq E$ containing $\colsp_{\F_q}(M)$ is
\begin{equation*}
    \binom{9-r}{8-r}_q.
\end{equation*}
Because $m\geq 9$, every matrix in $\F_q^{9\times m}$ has rank at most $9$, and because $d=6$, the only possible nonzero ranks in $\mC$ are $6,7,8,9$. Rank-$9$ codewords contribute nothing to this count, since their column space is $E$ and hence is not contained in any proper $8$-dimensional subspace. Therefore, counting by the rank of $M$, we obtain
\begin{equation*}
    \binom{9}{1}_q(q^{2m}-1)
    =
    \binom{3}{1}_qA_6(\mC)+\binom{2}{1}_qA_7(\mC)+\binom{1}{1}_qA_8(\mC).
\end{equation*}
Using $\binom{2}{1}_q=q+1$ and $\binom{1}{1}_q=1$, we obtain
\begin{equation*}
    A_8(\mC)=
    \binom{9}{1}_q(q^{2m}-1)-\binom{3}{1}_qA_6(\mC)-(q+1)A_7(\mC).
\end{equation*}

Finally, since $\dim(\mC)=3m$, we have $|\mC|=q^{3m}$. Hence
$$
A_9(\mC)=q^{3m}-A_0(\mC)-A_6(\mC)-A_7(\mC)-A_8(\mC)
=q^{3m}-1-A_6(\mC)-A_7(\mC)-A_8(\mC).
$$
This concludes the proof.
\end{proof}

%%%%%%%%%%%%%%%%%%%%%%%%%%%%%%%%%%%%%%%%%%%%%%%%%%%%%%%%%%%%%%%%%%%%%%%%%%%%%%%%%%%%%%%%%%

\section{Rank-two \emph{q}-matroids on $\F_2^4$}\label{sec: rank2_classification}

In this section, we classify all rank-two $q$-matroids on the ground space $\F_2^4$ with respect to multilinearity for matrix rank-metric codes with $m<4$. Recently, the authors of \cite{IKS25} computed the number of isomorphism classes of rank-two $q$-matroids on $\F_2^4$; previously, such a classification was only known for $\F_2^3$, as characterized in \cite{cj2022}. However, \cite{IKS25} does not provide
explicit representatives or a structural characterization of the different isomorphism classes. We therefore use the characterization from the forthcoming work of the second author
\cite{degenkuehne2026enumqmats}, which is currently in preparation. Throughout this section, we set $E=\F_2^4$.

The authors of \cite{IKS25} show that there are $10$ isomorphism classes of rank-two $q$-matroids on $E$. All $q$-matroids in the same isomorphism class have the same number of bases and are $\F_{2^m}$-representable for the same values of $m$. In
\Cref{table: F_2^4_rank2_qmatroids}, we list these $10$ isomorphism classes according to the number of their bases, and we record all values $m\leq 4$ for which the corresponding class is $\F_{2^m}$-representable. The table summarizes results from
\cite{degenkuehne2026enumqmats}, obtained using the computer algebra system \verb|OSCAR|.

\begin{table}[!h!]
\centering
\ras{1.3}
\begin{tabular}{@{}llll@{}}
\hline
$\mM$ & $|\mB_\mM|$  & \textbf{$\F_{2^m}$-representability} & \textbf{Remark}  \\
\hline
1.  & $16$ & $m\in\{1,2,3,4\}$ & \\
2.  & $24$ & $m\in\{2,3,4\}$ & \\
3.  & $28$ & $m\in\{3,4\}$ & non-paving \\
4.  & $28$ & $m\in\{3,4\}$ & paving \\
5.  & $30$ & $m\in\{2,4\}$ & \\
6.  & $31$ & none & shown in \cite{cj2022,gluesing2022q}\\
7.  & $32$ & $m\in\{3,4\}$ & \\
8.  & $33$ & $m=4$ & \\
9.  & $34$ & $m=4$ & almost uniform \\
10. & $35$ & $m=4$ & uniform \\
\hline
\end{tabular}
\caption{Characterization of the non-isomorphic rank-two $q$-matroids on $\F_2^4$ by the number of their bases and their $\F_{2^m}$-representability for $m\leq 4$.}
\label{table: F_2^4_rank2_qmatroids}
\end{table}

We fix the following notation for the rest of the section.

\begin{notation}
For $i\in[10]$, we denote by $(i,j)$ the isomorphism class in position $i$ of \Cref{table: F_2^4_rank2_qmatroids}, where $j$ is the corresponding number of bases,
that is, $j\in\{16,24,28,30,31,32,33,34,35\}$. For simplicity, we say that the isomorphism class $(i,j)$ is, or is not, representable in one of the senses of \Cref{def:representability}, rather than saying that its elements are, or are not, representable.
\end{notation}

In the remainder of this section, we discuss the possible multilinearity of the $10$ isomorphism classes of rank-two $q$-matroids on $\F_2^4$ with respect to representations by matrix rank-metric codes with $m<4$. By \Cref{def:representability}(3) and
\Cref{rem: multilin_case_m=1}, we only need to consider the cases $1<m<4$. Moreover, for a class to be purely $m$-multilinear, we only need to consider those values of $m$ for which the class is not already $\F_{2^m}$-representable, according to \Cref{table: F_2^4_rank2_qmatroids}. In some of the results below, we choose a representative of an isomorphism class, since all notions of representability in \Cref{def:representability} are invariant under isomorphism. The next result collects the trivial cases arising from \Cref{table: F_2^4_rank2_qmatroids}.

\begin{corollary}\label{coro: not_mul_trivial_cases}
    For all $1<m<4$, the isomorphism classes $(1,16)$ and $(2,24)$ are not purely $m$-multilinear.  
\end{corollary}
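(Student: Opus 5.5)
The plan is to read the statement directly off \Cref{table: F_2^4_rank2_qmatroids} together with the definition of pure multilinearity; no new structural argument is needed.

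Recall that a $q$-matroid is purely $m$-multilinear if it is $\F_q^{n\times m}$-representable but not $\F_{q^m}$-representable. Hence, to show that a class is not purely $m$-multilinear, it suffices to show that it is $\F_{2^m}$-representable, since this already rules out the second condition.

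By \Cref{table: F_2^4_rank2_qmatroids}, the class $(1,16)$ is $\F_{2^m}$-representable for every $m\in\{1,2,3,4\}$. The class $(2,24)$ is $\F_{2^m}$-representable for every $m\in\{2,3,4\}$. In both cases this covers all $m$ with $1<m<4$, namely $m\in\{2,3\}$.

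So for each of these two classes and each $m\in\{2,3\}$, the representability statement from the table shows the class is not purely $m$-multilinear. Since all notions in \Cref{def:representability} are invariant under isomorphism, it does not matter which representative of the class is chosen.

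There is no real obstacle here. The only point to double-check is that the range $1<m<4$ is fully contained in the listed $\F_{2^m}$-representability ranges for both classes, which it is. The substantive content lies in the computations of \cite{degenkuehne2026enumqmats} behind the table, which we take as given.
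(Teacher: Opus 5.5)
Your proposal is correct and follows the same argument as the paper's proof. Both read off from \Cref{table: F_2^4_rank2_qmatroids} that the classes $(1,16)$ and $(2,24)$ are $\F_{2^m}$-representable for $m\in\{2,3\}$, which by the definition of pure $m$-multilinearity rules it out for these values of $m$.
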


\begin{proof}
    By \Cref{table: F_2^4_rank2_qmatroids}, the isomorphism classes $(1,16)$ and $(2,24)$ are $\F_{2^m}$-representable for all $m\in\{2,3\}$. Therefore, for these values of $m$, they cannot be purely $m$-multilinear in the sense of
\Cref{def:representability}(3).
\end{proof}

In the next proposition, we consider the isomorphism classes for which only the case $m=2$ remains to be excluded.

\begin{proposition}\label{prop: not_mul_m=2_cases}
For all $1<m<4$, the isomorphism classes $(3,28)$, $(4,28)$, and $(7,32)$ are not purely $m$-multilinear.
\end{proposition}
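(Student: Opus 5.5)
Since classes $(3,28)$, $(4,28)$ and $(7,32)$ are $\F_{2^3}$-representable by \Cref{table: F_2^4_rank2_qmatroids}, none of them can be purely $3$-multilinear. It remains to rule out the case $m=2$. We will show that none of these classes is $\F_2^{4\times 2}$-representable at all.

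Fix a representative $\mM$ of rank $2$ on $E=\F_2^4$ and suppose $\mC\subseteq\F_2^{4\times 2}$ represents it. Evaluating the rank at $U=E$ gives $\dim\mC=2m=4$. Every codeword is a $4\times 2$ matrix, so its column space has dimension at most $2$. For a $2$-dimensional $U$, we have $\dim\mC(U^\perp)=4-2\rho(U)$. Hence $U$ is a basis exactly when $\mC(U^\perp)=0$, that is, when no nonzero codeword has column space inside $U^\perp$.

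The nonzero codewords are $15$ matrices. Each has column space a point $p$ or a line $L$. A codeword with column space $p$ kills the bases $U$ with $U^\perp\supseteq p$, i.e. all lines $U\subseteq p^\perp$; there are $7$ such lines. A codeword with column space $L$ kills exactly the one line $U=L^\perp$. Also, every $1$-dimensional $u$ has $\rho(u)\in\{0,1\}$, so $\dim\mC(u^\perp)\in\{2,4\}$.

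The plan is a counting argument. By $q$-matroid duality, the non-bases correspond to the planes $U^\perp$ with $\mC(U^\perp)\neq0$. Let $D$ denote the set of lines $W$ of $E$ containing the column space of some nonzero codeword; then $|\mB_\mM|=35-|D|$.

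First we treat the loop-free case. If $\mM$ has no loops, then every $3$-dimensional subspace $u^\perp$ satisfies $\dim\mC(u^\perp)=2$. Double count pairs $(M,H)$ with $H$ a hyperplane and $0\ne M\in\mC(H)$. This gives
\[
15\cdot 3=7a_1+3a_2,
\]
where $a_1$ and $a_2$ are the numbers of rank-$1$ and rank-$2$ codewords. Together with $a_1+a_2=15$, this forces $a_1=0$ and $a_2=15$.

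Moreover, each $\mC(H)$ is a $2$-dimensional space of matrices whose column spaces lie in the plane $H$. Any two distinct nonzero codewords of $\mC(H)$ span a space whose members have column spaces contained in $H$. Counting the $15\cdot 7$ pairs $(M,H)$ against the $15$ codewords, each lying in $3$ hyperplanes, is consistent. So the structure is that of a spread-like configuration of column spaces.

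Next we would determine $|D|$ in this configuration and compare it with $35-28=7$ and $35-32=3$. The lines occurring as column spaces satisfy incidence constraints: each plane contains exactly $3$ codeword-lines counted with multiplicity, and each codeword $M$ together with each other codeword $M'$ gives $M+M'$. We would enumerate the possible multisets of column spaces, using that the column space map is additive up to containment. The expectation is that $|D|\in\{5,\dots\}$ only yields the classes already known to be $\F_4$-representable, namely $(2,24)$ and $(5,30)$.

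For classes with loops, we would restrict by contracting the loop, reducing to a rank-$2$ $q$-matroid on $\F_2^3$. We then check against the known $\F_2^3$ classification. This determines whether $(3,28)$, which is non-paving, arises.

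The main obstacle is this last finite enumeration: classifying the possible $4$-dimensional subspaces of $\F_2^{4\times 2}$ and their induced sets of dependent lines. If the structural argument stalls, the fallback is to exploit equivalence under $\GL_4(\F_2)\times\GL_2(\F_2)$ to normalize a basis of $\mC$ and check the few orbits directly, or to cite the computation in \cite{degenkuehne2026enumqmats}.
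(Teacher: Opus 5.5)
Your reduction to $m=2$ is correct, and so is the loop-free double count $45=7a_1+3a_2$, which gives $a_1=0$ and $a_2=15$. The proposal, however, stops exactly where the proof has to happen. You never determine $|D|$; you only state an ``expectation''. That expectation is also slightly off: a loop-free representation can only produce $(5,30)$, because $(2,24)$ has a loop. The loop case $(3,28)$ is left as ``we would contract and check'', and you never verify which of the three classes are loop-free. You then fall back on an unspecified enumeration or a citation. As written, this is a plan, not a proof. The paper closes the case $m=2$ by an exhaustive MAGMA search instead: it runs over all $4$-dimensional subspaces of $\F_2^{4\times 2}$, filters by integrality of $\rho_\mC$, and finds that every survivor is right $\F_4$-linear.

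The missing step follows in one line from facts you already wrote down. In the loop-free case every nonzero codeword has rank $2$. So for a $2$-dimensional $W$, the nonzero elements of $\mC(W)$ are exactly the codewords with column space $W$, while $\dim\mC(W)=4-2\rho(W^\perp)\in\{0,2\}$. Hence column spaces occur in blocks of exactly $3$, so $|D|=15/3=5$ and $|\mB_\mM|=30$. Both remaining classes are loop-free: $(4,28)$ is paving, and $(7,32)$ has only $3$ non-bases, whereas a loop would make all $7$ two-spaces through it non-bases. Therefore neither is $\F_2^{4\times 2}$-representable.

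For $(3,28)$, let $\ell$ be the loop. Then $\rho(\ell)=0$ gives $\mC=\mC(\ell^\perp)$, so $\mC$ is effectively a $4$-dimensional code in $\F_2^{3\times 2}$. Rank-$1$ codewords are excluded: a point $p\subseteq\ell^\perp$ lies in $U^\perp$ for some $2$-space $U\subseteq p^\perp$ with $\ell\not\subseteq U$, and such a $U$ is a basis, so $\mC(U^\perp)=0$. Thus $d=2$, and the Singleton bound would require $4\le 3$. Equivalently, restricting to a complement of $\ell$ gives $\mU_{2,3}(2)$, which is excluded by puncturing as in \Cref{thm: almost_uniform_not_mul} together with \Cref{thm:uniform_not_multilinear}. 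With these additions your argument becomes a complete, computer-free alternative to the paper's search. It also gives more structural information, namely that any loop-free $\F_2^{4\times 2}$-representable rank-two $q$-matroid on $\F_2^4$ has exactly $30$ bases.
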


\begin{proof}
By \Cref{table: F_2^4_rank2_qmatroids}, all three isomorphism classes are $\F_{2^3}$-representable. Hence, in the range $1<m<4$, it remains only to exclude
the case $m=2$.  Suppose that an $\F_2$-$[4\times 2,k']$ matrix rank-metric code $\mC$ represents a
$q$-matroid in one of these three isomorphism classes. Then $\mC$ must satisfy the following properties, independently of the chosen class:
\begin{enumerate}
    \item The value
    $\rho_\mC(U)=\frac{k'-\dim(\C(U^\perp))}{2}$ is an integer for all
    $U\in\mL(E)$.
    \item The dimension of $\mC$ is $k'=4$.
    \item The code $\mC$ is not right $\F_{4}$-linear.
\end{enumerate}

Property (1) holds because $\rho_\mC$ must be the rank function of a $q$-matroid. Property (2) follows from $\rho_\mC(E)=k'/2=2$, since $\C(E^\perp)=\C(0)=\{0\}$ and we are considering rank-two $q$-matroids. Property (3) is required for a purely $2$-multilinear representation.
    
Using MAGMA, we enumerated all $4$-dimensional subspaces of $\F_2^{4\times 2}$, that is, all $\F_2$-$[4\times 2,4]$ matrix rank-metric codes. We then filtered this list by property (1), and finally checked property (3). After the first two steps, we obtained a finite list of candidate matrix rank-metric codes; however, all of them turned out to be right $\F_{4}$-linear. Hence, no matrix rank-metric code satisfying all three properties exists. Therefore, none of the isomorphism classes $(3,28)$, $(4,28)$, and $(7,32)$ is purely $m$-multilinear for $1<m<4$. 
\end{proof}

\begin{remark}\label{rem: not_mul_uniform_case}
The study of multilinearity for the uniform isomorphism class $(10,35)$ is already covered by \Cref{thm:uniform_not_multilinear}. For the isomorphism class $(8,33)$, it was already shown in \cite[Section~4]{gluesing2022q} that it is not purely $m$-multilinear for $m\leq 3$.
\end{remark}

Next, we study the remaining isomorphism classes, namely $(5,30)$, $(6,31)$ and $(9,34)$, which are more involved than the previous ones. We start with the isomorphism class corresponding to the almost uniform case.

\begin{proposition}\label{prop: not_mul_9_34}
For all $1<m<4$, the isomorphism class $(9,34)$ is not purely $m$-multilinear.
\end{proposition}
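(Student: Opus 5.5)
The class $(9,34)$ is the almost uniform class $\mA\mU_{2,4}(2,X)$: it has $35-1=34$ bases, namely all $2$-dimensional subspaces of $E=\F_2^4$ except a single one $X$. The plan is to apply \Cref{thm: almost_uniform_not_mul} directly, with $q=2$, $n=4$ and $k=2$. The hypothesis $0<k<n-1$ reads $0<2<3$, which holds.

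Suppose an $\F_2$-$[4\times m,k']$ code with $1<m<4$ represents a $q$-matroid in this class. Since $m<n=4$, \Cref{thm: almost_uniform_not_mul} forces $m=n-1=3$, and the code must have parameters $\F_2$-$[4\times 3,6,2]$. This already excludes $m=2$ outright: no $\F_2^{4\times 2}$-representation exists at all, so in particular there is no purely $2$-multilinear one.

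It remains to exclude $m=3$. The table lists $(9,34)$ as $\F_{2^m}$-representable only for $m=4$, so a $\F_2^{4\times3}$-representation would indeed be purely $3$-multilinear, and it must be ruled out by other means. I would first try the arguments available in the paper. The Singleton bound \eqref{eq:Singleton} gives $6\le 4(3-2+1)=8$, which is no contradiction. Puncturing yields a code representing $\mU_{2,3}(2)$ with $m=3\ge n-1=3$, which is also no contradiction. So the theoretical tools do not close this case.

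My next attempt is the MacWilliams argument in the style of \Cref{thm: qnon_pappus_multilin}. By properties (2)--(4) of the proof of \Cref{thm: almost_uniform_not_mul}, the rank-$2$ codewords are exactly the nonzero elements of $\C(X^\perp)$, so $A_2=2^3-1=7$. Every $3$-dimensional $W$ satisfies $\dim\C(W)=3$. Counting over the $15$ hyperplanes $W$: exactly $3$ of them contain $X^\perp$, and for those $\C(W)=\C(X^\perp)$. The other $12$ contribute $7$ rank-$3$ codewords each, and distinct $W$ give disjoint sets because a rank-$3$ codeword has column space exactly $W$. Hence $A_3=84$ and $A_1=0$. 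But $A_0+A_2+A_3=1+7+84=92\neq 64=2^6$, which is an immediate contradiction. The main obstacle is therefore only to verify this count carefully: every nonzero codeword has rank at most $m=3$, so its column space lies in some hyperplane, and the hyperplane contributions are disjoint as described. If this count were somehow inconsistent, I would fall back on an exhaustive MAGMA search over $6$-dimensional subspaces of $\F_2^{4\times 3}$, following \Cref{prop: not_mul_m=2_cases}.
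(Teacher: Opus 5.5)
Your proposal is correct and is essentially the paper's proof. You use \Cref{thm: almost_uniform_not_mul} to reduce to an $\F_2$-$[4\times 3,6,2]$ code, and the direct count $A_2(\mC)=7$, $A_3(\mC)=12\cdot 7=84$ gives $1+7+84=92>64$. Indexing by the twelve hyperplanes $W$ with $X^\perp\not\subseteq W$ is the same as the paper's indexing by the twelve points $U\not\subseteq X$, via $W=U^\perp$. The argument is direct counting rather than a MacWilliams identity, so neither that label nor the MAGMA fallback is needed.
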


\begin{proof}
Let $X$ be a $2$-dimensional subspace of $E$. Since representability in any of the senses of \Cref{def:representability} is invariant under isomorphism, we may choose the representative $\mM=\mA\mU_{2,4}(q,X)$ of the isomorphism class $(9,34)$. We denote its rank function by $\rho_\mM$.

By \Cref{thm: almost_uniform_not_mul}, any matrix rank-metric code representing $\mM$ with $m<4$ must have $m=3$ and parameters $\F_2$-$[4\times 3,6,2]$. Hence, it remains only to exclude the case $m=3$.

Assume, for a contradiction, that $\mM$ is $\F_2^{4\times 3}$-representable. Then there exists an $\F_2$-$[4\times 3,6,2]$ matrix rank-metric code $\mC$ such that, for all $U\in\mL(E)$, we have
\[
    \rho_\mC(U)=\frac{6-\dim(\C(U^\perp))}{3}=\rho_\mM(U).
\]
These equalities imply the following properties:
\begin{enumerate}
    \item[(1)] For all $U\in\mL(E)_{\geq 2}\setminus\{X\}$, we have
    $\dim(\C(U^\perp))=0$.
    \item[(2)] For $U=X$, we have $\dim(\C(X^\perp))=3$.
    \item[(3)] For all $U\in\mL(E)_{\leq 1}$, we have
    $\dim(\C(U^\perp))=3(2-\dim(U))$.
\end{enumerate}
Indeed, property (1) follows from the fact that $\rho_\mM(U)=2$ for all
$U\in\mL(E)_{\geq 2}\setminus\{X\}$. Property (2) follows from
$\rho_\mM(X)=1$, and property (3) follows from $\rho_\mM(U)=\dim(U)$ for all $U\in\mL(E)_{\leq 1}$.

We compute the rank distribution forced by these properties. Since $d=2$, we have $A_0(\mC)=1$ and $A_1(\mC)=0$. By (1) and (2), every rank-$2$ codeword has column space $X^\perp$, and conversely every nonzero element of $\C(X^\perp)$ has rank $2$. Hence, $A_2(\mC)=2^3-1=7$.

We now count rank-$3$ codewords. If $U\in\mL(E)_1$ satisfies $U\subseteq X$, then $X^\perp\subseteq U^\perp$, and by (2) and (3) we have $\C(U^\perp)=\C(X^\perp)$. Thus, such $U$ contribute no rank-$3$ codewords.

There are $\binom{2}{1}_2=3$ one-dimensional subspaces $U\subseteq X$, and therefore $\binom{4}{1}_2-3=12$ one-dimensional subspaces $U$ not contained in $X$. For each such $U$, property (3) gives $\dim(\C(U^\perp))=3$. Moreover, $\C(U^\perp)$ contains no nonzero rank-$2$ codeword, since any rank-$2$ codeword has column space $X^\perp$, which would force $X^\perp\subseteq U^\perp$, equivalently $U\subseteq X$. Hence, all nonzero elements of $\C(U^\perp)$ have rank $3$.

For distinct such subspaces $U,V$, the spaces $\C(U^\perp)$ and $\C(V^\perp)$ intersect trivially. Indeed, a nonzero element in the intersection would have column space contained in $U^\perp\cap V^\perp$, which has dimension $2$, and hence would be a rank-$2$ codeword. Therefore $A_3(\mC)=12(2^3-1)=84$.

Since $\dim(\mC)=6$, we have $|\mC|=2^6=64$. However,
\[
    \sum_{i=0}^3 A_i(\mC)=1+0+7+84=92,
\]
a contradiction. Hence no such matrix rank-metric code exists, and the isomorphism class $(9,34)$ is not purely $m$-multilinear for any $1<m<4$.
\end{proof}

Next, we discuss multilinearity  of the isomorphism class $(5,30)$.

\begin{proposition}\label{prop: not_mul_5_30}
    For all $1<m<4$, the isomorphism class $(5,30)$ is not purely $m$-multilinear.
\end{proposition}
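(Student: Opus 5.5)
By \Cref{table: F_2^4_rank2_qmatroids}, the class $(5,30)$ is $\F_{2^2}$-representable. So in the range $1<m<4$ the only case left to exclude is $m=3$. I plan to show that this class admits no $\F_2^{4\times 3}$-representation at all. The approach is a counting argument on the rank distribution, in the same spirit as the proof of \Cref{prop: not_mul_9_34}.

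First I will fix an explicit representative $\mM$ of $(5,30)$. The table says it has $30$ bases, so exactly $5$ of the $35$ two-dimensional subspaces of $E$ are non-bases. A representative with $30$ bases that is $\F_4$- but not $\F_8$-representable should be the paving $q$-matroid induced by a spread, i.e.\ by a set $\mS$ of $5$ pairwise trivially intersecting $2$-spaces covering $E$. This fits \Cref{prop: paving_construction} with $k=2$, and the spread structure explains $\F_4$-linearity. I would confirm this identification against \cite{degenkuehne2026enumqmats}. If instead the class turns out to have loops or a non-paving structure, I would adapt the counts below accordingly.

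Next I assume an $\F_2$-$[4\times 3,k']$ code $\mC$ represents $\mM$. Exactly as in \Cref{thm: almost_uniform_not_mul}, the rank function gives:
\begin{enumerate}
\item[(1)] $k'=6$;
\item[(2)] $\C(U^\perp)=0$ for $U\in\mL(E)_{\geq 2}\setminus\mS$;
\item[(3)] $\dim\C(X^\perp)=3$ for $X\in\mS$;
\item[(4)] $\dim\C(U^\perp)=3$ for every point $U$.
\end{enumerate}
By (2), $\mC$ has no codewords of rank $1$. By (2) and (3), every rank-$2$ codeword has column space $X^\perp$ for some $X\in\mS$. The spaces $X^\perp$ also form a spread, hence intersect pairwise trivially, so the spaces $\C(X^\perp)$ meet pairwise in $0$. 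This gives $A_2(\mC)=5\cdot 7=35$.

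Then I count rank-$3$ codewords by double counting. Each point $U$ lies in exactly one spread element $X$, so $U^\perp$ contains exactly one $X^\perp$. Property (4) then forces $\C(U^\perp)=\C(X^\perp)$, which consists only of rank-$2$ codewords. Hence no $3$-space $U^\perp$ carries any rank-$3$ codeword, and $A_3(\mC)=0$. Since $m=3$, every matrix has rank at most $3$. So $|\mC|=1+35+0=36\neq 2^6$, a contradiction.

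The main obstacle is making sure the representative really is the spread paving $q$-matroid; this must be checked against the enumeration in \cite{degenkuehne2026enumqmats}. If the structure differs, the same scheme still applies: compute $A_2$ from the circuits and $A_3$ from the points via (4), then compare with $64$. As a backup, I would fall back on a MAGMA enumeration of $6$-dimensional subspaces of $\F_2^{4\times 3}$, as in \Cref{prop: not_mul_m=2_cases}.
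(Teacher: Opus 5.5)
Your proposal is correct and follows the paper's proof step for step. It uses the same spread representative and the same properties (1)--(4), gets $A_2(\mC)=35$ from the pairwise trivially intersecting spaces $\C(X^\perp)$, and gets $A_3(\mC)=0$ because each $\C(x^\perp)$ equals the unique $\C(X^\perp)$ it contains, giving $36\neq 64$.

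The paper simply asserts the identification of the representative that you flag as the main obstacle, but it follows directly. A loop would make all $7$ two-spaces through it non-bases, and two non-bases meeting in a point would span a rank-one $3$-space whose $7$ two-subspaces are all non-bases. So the $5$ non-bases must be pairwise disjoint and hence form a spread.
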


\begin{proof}
All elements in the isomorphism class $(5,30)$ are paving $q$-matroids arising from the construction in \Cref{prop: paving_construction}, where the corresponding set $\mS$ has size $5$. Since representability in any of the senses of \Cref{def:representability} is invariant under isomorphism, we may choose a representative as follows. Let  $\mS\subseteq\mL(E)_2$ be such that $|\mS|=5$ and $\dim(V\cap W)=0$ for all distinct $V,W\in\mS$, and let $\mM_\mS=(\mL(E),\rho_\mS)$ be the corresponding paving $q$-matroid.
    
By \Cref{table: F_2^4_rank2_qmatroids}, we only need to exclude the case $m=3$.
Assume, for a contradiction, that $\mM_\mS$ is $\F_2^{4\times 3}$-representable.
Then there exists an $\F_2$-$[4\times 3,k',d]$ matrix rank-metric code $\mC$ such
that, for all $U\in\mL(E)$,
\[
    \rho_\mC(U)=\frac{k'-\dim(\C(U^\perp))}{3}=\rho_\mS(U).
\]
These equalities imply the following properties:
\begin{enumerate}
    \item[(1)] $\dim(\mC)=k'=6$.
    \item[(2)] For all $U\in\mL(E)_{\geq 2}\setminus\mS$, we have
    $\dim(\C(U^\perp))=0$.
    \item[(3)] For all $U\in\mS$, we have $\dim(\C(U^\perp))=3$.
    \item[(4)] For all $U\in\mL(E)_{\leq 1}$, we have
    $\dim(\C(U^\perp))=3(2-\dim(U))$.
\end{enumerate}

Indeed, (1) follows by evaluating at $U=E$, since $\rho_\mS(E)=2$ and $\C(E^\perp)=\C(0)=\{0\}$. Using $k'=6$, properties (2)--(4) follow directly from the values of $\rho_\mS$.

Property (2), applied to $3$-dimensional subspaces $U\leq E$, shows that $\mC$ contains no nonzero matrix of rank at most $1$. On the other hand, property (3) implies that $\mC$ contains nonzero matrices whose column space is contained in $S^\perp$ for some $S\in\mS$, hence matrices of rank at most~$2$. Therefore the minimum distance of $\mC$ is $d=2$.
    
We now compute the rank distribution forced by these properties. We have
$A_0(\mC)=1$ and $A_1(\mC)=0$. Moreover, by (2) and (3), every rank-$2$ codeword has column space $S^\perp$ for some $S\in\mS$, and conversely every nonzero element of $\C(S^\perp)$ has rank $2$. For distinct $S,S'\in\mS$, the spaces $\C(S^\perp)$ and $\C((S')^\perp)$ intersect trivially, since a nonzero element in their intersection would have rank at most
$\dim(S^\perp\cap (S')^\perp)=\dim((S+S')^\perp)=0$, as $S\cap S'=0$ and $\dim(S+S')=4$. Hence $A_2(\mC)=5(2^3-1)=35$.

We claim that $A_3(\mC)=0$. Since the elements of $\mS$ are pairwise disjoint $2$-dimensional subspaces and $|\mS|=5$, they contain $5\binom{2}{1}_2=15$ one-dimensional subspaces in total. This is the total number $\binom{4}{1}_2=15$ of one-dimensional subspaces of $E$. Hence every one-dimensional subspace $x\leq E$ is contained in a unique member $S\in\mS$. Let $x\in\mL(E)_1$, and let $S\in\mS$ be the unique element such that $x\subseteq S$. Then $S^\perp\subseteq x^\perp$, and hence
$\C(S^\perp)\subseteq \C(x^\perp)$. By (3) and (4), both spaces have dimension $3$, so $\C(x^\perp)=\C(S^\perp)$. Thus, for every $x\in\mL(E)_1$, the space $\C(x^\perp)$ consists only of rank-$2$ codewords and zero. Consequently, $\mC$ contains no rank-$3$ codewords, i.e. $A_3(\mC)=0$.
Finally, since $\dim(\mC)=6$, we have $|\mC|=2^6=64$. However, the rank distribution computed above gives
\[
    \sum_{i=0}^3 A_i(\mC)=1+0+35+0=36,
\]
a contradiction. Therefore, no such matrix rank-metric code exists, and the isomorphism class $(5,30)$ is not purely $m$-multilinear for any $1<m<4$.
\end{proof}

\begin{remark}
    The techniques used in the proofs of Propositions~6.5 and~6.6 can also be used to prove non-multilinearity of the isomorphism class $(8,33)$. Furthermore, this approach is quite different from the one in \cite[Section 4]{gluesing2022q}, where the authors used the computer algebra system SageMath to verify the non-multilinearity computationally.  
\end{remark}

Finally, we investigate the multilinearity property of the isomorphism class $(6,31)$. In this particular case, we consider all possible values $m>1$, in contrast to the previous results, where we restricted to $m<4$. The same result was already proved in \cite[Theorem~4.9]{gluesing2022q}; however, for self-containment, we include a proof adapted to our setting.

\begin{proposition}\label{prop: not_mul_6_31}
For all $m>1$, the isomorphism class $(6,31)$ is not $m$-multilinear.
\end{proposition}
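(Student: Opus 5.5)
The plan is to run the same rank-distribution argument as in \Cref{prop: not_mul_9_34} and \Cref{prop: not_mul_5_30}, and to add the MacWilliams identities of \Cref{thm: MacWil_identity} where plain counting is not enough. First I fix a convenient representative of $(6,31)$. It has $35-31=4$ non-bases among the $2$-dimensional subspaces of $E$. I expect it to be the paving $q$-matroid $\mM_\mS$ of \Cref{prop: paving_construction} with $\mS=\{S_1,\dots,S_4\}$ a partial spread, i.e.\ four pairwise trivially intersecting $2$-spaces; I would verify this against the description in \cite{cj2022}. A partial spread of size $4$ in $\PP(\F_2^4)$ extends to a spread. Hence the $3$ points not covered by $\mS$ form a line $L$, with $L\notin\mS$ and $L\cap S_i=0$ for all $i$.

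Next, suppose $\mC\subseteq\F_2^{4\times m}$ represents $\mM_\mS$. Exactly as before, $\dim\mC=2m$. For $U\in\mL(E)_{\ge2}\setminus\mS$ we get $\C(U^\perp)=0$, while $\dim\C(S^\perp)=m$ for $S\in\mS$ and $\dim\C(x^\perp)=m$ for every point $x$. Hence $d=2$, and every rank-$2$ codeword has column space $S^\perp$ for some $S\in\mS$. The four spaces $\C(S^\perp)$ meet trivially, so $A_2(\mC)=4(2^m-1)$. For the $12$ covered points $x\subseteq S$ we get $\C(x^\perp)=\C(S^\perp)$. For the $3$ points $x\subseteq L$, every nonzero element of $\C(x^\perp)$ has rank $3$. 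For two distinct such points the intersection would have column space in $L^\perp$, but $\C(L^\perp)=0$, so the intersection is trivial. Every rank-$3$ codeword lies in some $\C(x^\perp)$, so $A_3(\mC)=3(2^m-1)$. When $m\ge4$, the remaining codewords have rank $4$, so $A_4=2^{2m}-1-7(2^m-1)$. When $m\le3$, the count must close up within ranks $\le m$. For $m=2$, $A_2+A_3$ must fit into ranks $\le2$, forcing $A_3=0$, which contradicts $A_3=9$. For $m=3$, the distribution gives $1+28+21=50\neq64$, a contradiction exactly as in \Cref{prop: not_mul_9_34}.

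The main obstacle is $m\ge4$. There the naive count is consistent, and the $r=1$ MacWilliams identity only yields $A_1(\mC^\perp)=0$, which is no contradiction. I would therefore apply \Cref{thm: MacWil_identity} with $N=4$, $M=m$, $t=2m$ and $r=2$. With $A_1(\mC^\perp)=0$, this isolates $A_2(\mC^\perp)$ as an explicit expression in $2^m$. I then aim to show that it is negative or non-integral for every $m\ge4$, in the spirit of the divisibility argument in \Cref{thm: qnon_pappus_multilin}.

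If that fails, I would fall back on a structural argument via the dual. By duality of representations, $\mC^\perp$ represents $\mM_\mS^*$, which is again rank $2$ and paving. I would then derive a contradiction from restricting to a hyperplane $x^\perp$ with $x\subseteq L$, using \cite[Theorem~5.5]{gluesing2022q}. I expect this $m\ge4$ step to be the crux, since it must capture the genuine non-representability proved in \cite[Theorem~4.9]{gluesing2022q} rather than a mere counting inconsistency.
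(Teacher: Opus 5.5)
The gap is the case $m\ge 4$, and it is the actual content of the statement; your $m=2,3$ arguments are fine. There, the conditions force $A_2(\mC)=4(2^m-1)$ and $A_3(\mC)=3(2^m-1)$. For $m=2$ this is impossible because $\F_2^{4\times 2}$ has no rank-$3$ matrices, and for $m=3$ it gives $1+28+21=50\neq 64$.

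\textbf{Why your primary plan fails.} Plug your distribution into \Cref{thm: MacWil_identity} with $N=4$, $M=m$, $t=2m$, $r=2$ and $A_1(\mC^\perp)=0$. This gives
\[
6A_2(\mC^\perp)=\bigl(2^{2m}+69\cdot 2^m+186\bigr)-\bigl(2^{2m}+45\cdot 2^m+210\bigr)=24(2^m-1),
\]
so $A_2(\mC^\perp)=4(2^m-1)$, a positive integer.

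This is not an accident. For $\dim V=2$ one has $\rho_\mS^*(V)=\rho_\mS(V^\perp)$, so $\mM_\mS^*$ is the paving $q$-matroid induced by the partial spread $\{S^\perp:S\in\mS\}$. That is again of type $(6,31)$. The MacWilliams transform therefore just returns the same distribution, and for every $r$ the identities hold with nonnegative integer $A_j(\mC^\perp)$. The numbers forced by the values $\dim\mC(U)$ are all consistent for $m\ge 4$, so no identity based on the rank distribution will give a contradiction.

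\textbf{Why the fallback does not close the gap.} It is not yet an argument, and a single hyperplane restriction cannot produce a contradiction. Every hyperplane of $\F_2^4$ contains exactly one line of the spread $\{S_1,\dots,S_4,L\}$. So the restriction is either $\mU_{2,3}(2)$ or the rank-$2$ paving $q$-matroid on $\F_2^3$ with one circuit, and both are $\F_{2^m}$-representable for all $m\ge 3$.

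\textbf{The missing idea.} You need to use the linear structure of $\mC$, as the paper does.
\begin{enumerate}
\item Note that $\{S_1^\perp,\dots,S_4^\perp,L^\perp\}$ is again a spread. Since $\mC(S_1^\perp)\cap\mC(S_2^\perp)=0$ and both have dimension $m$, you get $\mC=\mC(S_1^\perp)\oplus\mC(S_2^\perp)$.
\item Each of $S_3^\perp$, $S_4^\perp$, $L^\perp$ is the graph $\{x+\alpha(x):x\in S_1^\perp\}$ of an isomorphism $\alpha_3,\alpha_4,\alpha_L\colon S_1^\perp\to S_2^\perp$. A codeword $M_1+M_2$ lies in $\mC(S_j^\perp)$ if and only if $M_2=\alpha_j(M_1)$, with $\alpha_j$ acting columnwise.
\item The condition $\dim\mC(S_j^\perp)=m$ forces $\alpha_j(\mC(S_1^\perp))=\mC(S_2^\perp)$ for $j=3,4$.
\item The spread property over $\F_2$ forces $\alpha_L=\alpha_3+\alpha_4$.
\item Hence $M_1+\alpha_L(M_1)\in\mC(L^\perp)$ for every $M_1\in\mC(S_1^\perp)$, so $\dim\mC(L^\perp)\ge m$. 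This contradicts $\mC(L^\perp)=0$, since $L\notin\mS$.
\end{enumerate}
This argument works for every $m$ at once, so your separate counting for $m=2,3$ becomes unnecessary.
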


\begin{proof}
All elements in the isomorphism class $(6,31)$ are paving $q$-matroids arising from the construction in Proposition~2.5, where the corresponding set $S$ has size $4$. Since representability in the sense of Definition~2.13 is invariant under isomorphism, we may choose a representative as follows. Let $\mS=\{S_1,S_2,S_3,S_4\}\subseteq \mL(E)_2$, where $E=\F_2^4$, such that $S_i\cap S_j=0$ for all $i\neq j$, and let $\mM_\mS=(\mL(E),\rho_\mS)$ be the corresponding paving $q$-matroid.

Assume for contradiction that $\mM_\mS$ is $\F_2^{4\times m}$-representable for some $m>1$. This means that there exists an
$\F_2$-$[4\times m,k^\prime,d]$ matrix rank-metric code $\mC$ such that, for all $U\in\mL(E)$, we have
\[
    \rho_\mC(U)=\frac{k^\prime-\dim(\mC(U^\perp))}{m}=\rho_\mS(U).
\]
These equalities imply the following properties:
\begin{enumerate}
    \item[(1)] The dimension of $\mC$ is $k^\prime=2m$.
    \item[(2)] For all $U\in\mL(E)_{\geq 2}\setminus \mS$ we have
    $\dim(\mC(U^\perp))=0$.
    \item[(3)] For all $U\in \mS$ we have $\dim(\mC(U^\perp))=m$.
    \item[(4)] For all $U\in\mL(E)_{\leq 1}$ we have
    $\dim(\mC(U^\perp))=m(2-\dim(U))$.
\end{enumerate}
Indeed, the first property follows from $\rho_\mS(E)=2$ and
$\mC(E^\perp)=\la0\ra$. Using $k^\prime=2m$, property (2) follows from
$\rho_\mS(U)=2$ for all $U\in\mL(E)_{\geq 2}\setminus S$, property (3) follows from $\rho_\mS(U)=1$ for all $U\in S$, and property (4) follows from $\rho_\mS(U)=\dim(U)$ for all $U\in\mL(E)_{\leq 1}$.

The four spaces $S_1^\perp,\dots,S_4^\perp$ are again pairwise disjoint
two-dimensional subspaces of $E$. Hence they contain $4(2^2-1)=12$ nonzero vectors. Since $E$ has $2^4-1=15$ nonzero vectors, the remaining three nonzero vectors form a two-dimensional subspace. We denote this space by $S_5^\perp$.
Then $S_1^\perp,\dots,S_5^\perp$ form a $2$-spread of $E$.

We now show that the code conditions force $\dim\mC(S_5^\perp)\geq m$,
contradicting property (2). Since $S_1^\perp\cap S_2^\perp=0$, we have
$\mC(S_1^\perp)\cap\mC(S_2^\perp)=0$. Both spaces have dimension $m$, and
$\dim\mC=2m$, so
\[
    \mC=\mC(S_1^\perp)\oplus\mC(S_2^\perp).
\]

Let
\[
    \pi_1:E=S_1^\perp\oplus S_2^\perp\longrightarrow S_1^\perp,
    \qquad
    \pi_2:E=S_1^\perp\oplus S_2^\perp\longrightarrow S_2^\perp
\]
be the projections with respect to this decomposition. For $j=3,4,5$, the space $S_j^\perp$ is disjoint from both $S_1^\perp$ and $S_2^\perp$. Hence the restriction $\pi_1|_{S_j^\perp}:S_j^\perp\to S_1^\perp$ is an isomorphism. Define
\[
    \alpha_j:=\pi_2\circ(\pi_1|_{S_j^\perp})^{-1}
    :S_1^\perp\longrightarrow S_2^\perp.
\]
Equivalently, for every $x\in S_1^\perp$, the vector
$x+\alpha_j(x)$ is the unique vector in $S_j^\perp$ whose projection onto $S_1^\perp$ is $x$. Thus a vector $v\in E$ lies in $S_j^\perp$ if and only if
\[
    \pi_2(v)=\alpha_j(\pi_1(v)).
\]
By a slight abuse of notation, we also denote by $\alpha_j$ the induced map on matrices whose columns lie in $S_1^\perp$, obtained by applying $\alpha_j$ to each column. Thus, if $M$ has columns $v_1,\dots,v_m$ with $v_\ell\in S_1^\perp$, then $\alpha_j(M):=(\alpha_j(v_1),\dots,\alpha_j(v_m))$.

Since $\mC=\mC(S_1^\perp)\oplus\mC(S_2^\perp)$, every matrix in $\mC$ can be written uniquely as $M_1+M_2$, with $M_1\in\mC(S_1^\perp)$ and
$M_2\in\mC(S_2^\perp)$. For $j=3,4,5$, such a matrix belongs to
$\mC(S_j^\perp)$ if and only if $M_2=\alpha_j(M_1)$. Hence
\[
    \mC(S_j^\perp)=
    \{M_1+\alpha_j(M_1) : M_1\in\mC(S_1^\perp),\
    \alpha_j(M_1)\in\mC(S_2^\perp)\}.
\]

Since $\dim\mC(S_3^\perp)=m=\dim\mC(S_1^\perp)$, the previous description implies
\[
    \alpha_3(\mC(S_1^\perp))=\mC(S_2^\perp).
\]
Similarly, from $\dim\mC(S_4^\perp)=m$, we get
\[
    \alpha_4(\mC(S_1^\perp))=\mC(S_2^\perp).
\]

We claim that $\alpha_5=\alpha_3+\alpha_4$. Indeed, since
$S_3^\perp\cap S_4^\perp=0$, the map
$\alpha_3+\alpha_4:S_1^\perp\to S_2^\perp$ is injective, hence an
isomorphism. The space
\[
    \{x+(\alpha_3+\alpha_4)(x):x\in S_1^\perp\}
\]
is therefore a two-dimensional subspace disjoint from
$S_1^\perp,S_2^\perp,S_3^\perp,S_4^\perp$. Since
$S_1^\perp,\dots,S_5^\perp$ form a spread, this space must be $S_5^\perp$.
Hence $\alpha_5=\alpha_3+\alpha_4$. With the above columnwise convention, this
also gives
\[
    \alpha_5(M)=\alpha_3(M)+\alpha_4(M)
\]
for every matrix $M$ whose columns lie in $S_1^\perp$.

Now let $M_1\in\mC(S_1^\perp)$. Since
$\alpha_3(\mC(S_1^\perp))=\mC(S_2^\perp)$ and
$\alpha_4(\mC(S_1^\perp))=\mC(S_2^\perp)$, we have that 
$\alpha_3(M_1),\alpha_4(M_1)\in\mC(S_2^\perp)$. Since
$\mC(S_2^\perp)$ is an $\F_2$-linear space, also
$\alpha_3(M_1)+\alpha_4(M_1)\in\mC(S_2^\perp)$. Therefore
$\alpha_5(M_1)\in\mC(S_2^\perp)$.

Hence, for every $M_1\in\mC(S_1^\perp)$, the matrix
$M_1+\alpha_5(M_1)$ belongs to $\mC(S_5^\perp)$. Since
$\dim\mC(S_1^\perp)=m$, we obtain
\[
    \dim\mC(S_5^\perp)\geq m.
\]

On the other hand, $S_5^\perp\notin\{S_1^\perp,\dots,S_4^\perp\}$, hence
$S_5\notin \mS$. Therefore, (2) gives $\mC(S_5^\perp)=0$, a contradiction. Thus, no such code $\mC$ exists, and the isomorphism class $(6,31)$ is not $m$-multilinear for any $m>1$.
\end{proof}

%%%%%%%%%%%%%%%%%%%%%%%%%%%%%%%%%%%%%%%%%%%%%%%%%%%%%%%%%%%%%%%%%%%%%%%%%%%%%%%%%%%%%%%%%%
\subsection{Multilinearity classification of all \emph{q}-matroids on $\F_2^3$ and $\F_2^4$}
\label{subsec: classification_F23_F24}

In \Cref{sec: rank2_classification}, we classified all rank-two $q$-matroids on the ground space $\F_2^4$ with respect to pure multilinearity, for matrix rank-metric codes with $m<4$. In this subsection, we extend this classification to all $q$-matroids on $\F_2^4$, again for matrix rank-metric codes with $m<4$. In addition, we provide the corresponding  classification for all $q$-matroids on the ground space $\F_2^3$.

Recall that, by duality, it is enough to consider $q$-matroids of rank $k$ with $0\leq k\leq \lfloor 3/2\rfloor$ on $\F_2^3$, and with
$0\leq k\leq 2$ on $\F_2^4$. Moreover, by
\Cref{ex: uniform_representability} and \Cref{rem: multilin_case_m=1}, we do not need to consider the rank-zero $q$-matroids $\mU_{0,3}(2)$ and $\mU_{0,4}(2)$. Hence, after the discussion in
\Cref{sec: rank2_classification}, it remains only to treat rank-one $q$-matroids.

The following result characterizes pure multilinearity for rank-one $q$-matroids on $E=\F_q^n$.

\begin{theorem}\label{thm: rank1_non_mul}
   Let $\mM=(\mL(E),\rho_\mM)$ be a $q$-matroid of rank one on $E=\F_q^n$. Then there exists no $m>1$ such that $\mM$ is purely $m$-multilinear.
\end{theorem}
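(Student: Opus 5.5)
Rank-one $q$-matroids have a simple structure: the loops form a subspace $L$ (the closure of $0$), and $\rho(U)=0$ if $U\subseteq L$ and $\rho(U)=1$ otherwise, with $L\neq E$ since the rank is one. The plan is to show that whenever $\mM$ is $\F_q^{n\times m}$-representable (so, in particular, for every $m>1$), it is already $\F_{q^m}$-representable. Then it cannot be purely $m$-multilinear.

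We use the following direct construction. Let $\ell=\dim L<n$ and pick a nonzero $y\in\F_q^n$ with $y\perp L$, for instance any nonzero vector of $L^\perp$. View $y$ as a vector of $\F_{q^m}^n$ with $\F_q$-entries and let $\mC=\langle y\rangle_{\F_{q^m}}$. Every nonzero codeword $\lambda y$ has rank support $\langle y\rangle$. Hence $\mC(W)=\mC$ if $y\in W$ and $\mC(W)=0$ otherwise, so that
\[
\rho_{\mC}(U)=1-\dim_{\F_{q^m}}\mC(U^\perp)=\begin{cases}0 & \text{if } U\subseteq y^\perp,\\ 1&\text{otherwise.}\end{cases}
\]
This is the rank-one $q$-matroid with loop space $y^\perp$, of dimension $n-1$. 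It only matches $\mM$ when $\ell=n-1$, so the construction must be generalized.

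In general we need $\rho(U)=0$ exactly when $U\subseteq L$, that is, when $U^\perp\supseteq L^\perp$. This calls for a codeword whose rank support is exactly $L^\perp$, which has dimension $n-\ell$. A single vector $x\in\F_{q^m}^n$ has rank support of dimension at most $m$, so this is possible exactly when $m\ge n-\ell$. In that case, choose $x$ with $\supp(x)=L^\perp$ and set $\mC=\langle x\rangle_{\F_{q^m}}$. All nonzero multiples of $x$ have the same support. Then $\mC(U^\perp)\neq0$ if and only if $L^\perp\subseteq U^\perp$, if and only if $U\subseteq L$, which gives $\mM=\mM[\mC]$.

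The complementary case is $m<n-\ell$. Here we show that $\mM$ is not $\F_q^{n\times m}$-representable at all, arguing as in \Cref{thm:uniform_not_multilinear}. Suppose $\mC\subseteq\F_q^{n\times m}$ represents $\mM$. Then $\dim\mC=m$ and $\mC(U^\perp)=\mC$ for every $U\subseteq L$, in particular for $U=L$. So every codeword has column space inside $L^\perp$. On the other hand, for every hyperplane $H\supseteq L$ of $E$ that is not all of $E$, i.e.\ for every $U$ with $U\not\subseteq L$, we have $\mC(U^\perp)=0$. Consider the sum $S$ of the column spaces of all codewords. It lies in $L^\perp$, and we claim $S=L^\perp$. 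Otherwise there is a one-dimensional $u\not\subseteq L$ with $S\subseteq u^\perp$. Indeed, $S^\perp\supsetneq L$, so we may pick $u\subseteq S^\perp$ with $u\not\subseteq L$. Then $\mC(u^\perp)=\mC\neq0$, a contradiction. So $S=L^\perp$. Now use $\mC(U^\perp)=0$ for every one-dimensional $U\not\subseteq L$: every nonzero codeword $M$ must have column space not contained in any hyperplane $u^\perp$ with $u\not\subseteq L$. Inside $L^\perp$ this means $\colsp(M)=L^\perp$, i.e.\ $\rk(M)=n-\ell$. Hence $n-\ell\le m$, which contradicts $m<n-\ell$.

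The main obstacle is this last step, namely correctly translating ``no proper sub-support'' into $\colsp(M)=L^\perp$. The key observation is that the subspaces of $L^\perp$ of the form $L^\perp\cap u^\perp$ run over all hyperplanes of $L^\perp$. Once this is settled, the two cases $m\ge n-\ell$ (the matroid is $\F_{q^m}$-representable) and $m<n-\ell$ (it is not representable at all) together exclude pure $m$-multilinearity for every $m>1$.
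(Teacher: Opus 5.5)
Your proof is correct and follows essentially the same route as the paper. For $m\ge n-\dim L$ you take the one-dimensional $\F_{q^m}$-linear code spanned by a vector with rank support $L^\perp$, which is exactly what the paper's generator $(0,\ldots,0,1,\alpha,\ldots,\alpha^{n-t-1})$ is; for $m<n-\dim L$ you show that every nonzero codeword has column space exactly $L^\perp$, which is the paper's minimum-distance argument $d=n-t\le m$ in slightly different form. The differences are cosmetic: you derive the loop-space structure of rank-one $q$-matroids from submodularity instead of citing it, and the opening construction with $y$ and the claim $S=L^\perp$ are detours your final argument does not need.
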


\begin{proof}
    By \cite[Remark~4.8]{DK25}, a rank-one $q$-matroid is determined, up to isomorphism, by its loop space. If the loop space has dimension $t$, where $0\leq t\leq n-1$, then the corresponding rank-one $q$-matroid is $\F_{q^{n-t}}$-representable. 

    We may assume, without loss of generality, that the loop space is $L=\la e_1,\ldots,e_t\ra$, where $e_i$ denotes the $i$-th standard basis vector of $\F_q^n$. The case $t=0$ is the uniform $q$-matroid $\mU_{1,n}(q)$, which is already covered by \Cref{thm:uniform_not_multilinear}. The case $t=n-1$ is $\F_q$-representable, and therefore cannot be purely $m$-multilinear. Hence, we may assume $0<t<n-1$.
    
    By \cite[Remark~4.8]{DK25}, the $q$-matroid $\mM$ is represented by an $\F_{q^{n-t}}$-linear rank-metric code with generator matrix \[
    G=
    \begin{pmatrix}
        0&\ldots&0&1&\alpha&\alpha^2&\ldots&\alpha^{n-t-1}
    \end{pmatrix},
    \]
where $\alpha$ is a primitive element of $\F_{q^{n-t}}$ over $\F_q$.
Moreover, for every $m\geq n-t$, the same construction gives an $\F_{q^m}$-linear representation of $\mM$. Thus, to exclude pure
$m$-multilinearity, it remains to rule out matrix-code representations in $\F_q^{n\times m}$ for $1<m<n-t$.

    Assume, for a contradiction, that there exists an $\F_q$-$[n\times m,k^\prime,d]$ matrix rank-metric code $\mC\leq\F_q^{n\times m}$, with $1<m<n-t$, such that, for all $U\in\mL(E)$,
\[
    \rho_\mC(U)
    =
    \frac{k^\prime-\dim(\mC(U^\perp))}{m}
    =
    \rho_\mM(U).
\]
These equalities imply the following properties:
\begin{enumerate}
    \item[(1)] $\dim(\mC)=k^\prime=m$.
    \item[(2)] For all $U\in\mL(E)\setminus\mL(L)$, we have
    $\dim(\mC(U^\perp))=0$.
    \item[(3)] For all $U\in\mL(L)$, we have
    $\dim(\mC(U^\perp))=m$.
\end{enumerate}
Indeed, property (1) follows by evaluating at $U=E$, since $\rho_\mM(E)=1$ and $\mC(E^\perp)=\mC(0)=\la0\ra$. Using $k^\prime=m$,
property (2) follows from $\rho_\mM(U)=1$ for
$U\notin\mL(L)$, while property (3) follows from $\rho_\mM(U)=0$ for $U\in\mL(L)$.

We now show that these conditions force the minimum rank distance of $\mC$ to be $n-t$. By property (2), applied to all subspaces
$U\in\mL(E)_{t+1}$ with $U\not\subseteq L$, the code $\mC$ contains no nonzero matrix of rank at most $n-t-1$. Indeed, if $M\in\mC$ were nonzero with $\operatorname{rk}(M)\leq n-t-1$, then $\operatorname{colsp}(M)$ would be contained in some subspace $W\leq E$ of dimension $n-t-1$. Hence $W=U^\perp$ for some $U\in\mL(E)_{t+1}$. Since $\dim(U)=t+1$, we have $U\not\subseteq L$, and thus property (2) gives $\mC(U^\perp)=0$, a contradiction.

On the other hand, property (3), applied to $U=L$, gives $\dim(\mC(L^\perp))=m$. Hence $\mC$ contains a nonzero matrix whose column space is contained in $L^\perp$, and therefore whose rank is at most $\dim(L^\perp)=n-t$. Since ranks at most $n-t-1$ are excluded, the minimum rank distance of $\mC$ is $d=n-t$.
But every matrix in $\F_q^{n\times m}$ has rank at most $m$. Therefore $d\leq m$, and so $n-t\leq m$, contradicting the assumption $m<n-t$.
Thus no such matrix rank-metric code exists for $1<m<n-t$.

For $m\geq n-t$, the $q$-matroid $\mM$ is already $\F_{q^m}$-representable, and hence it is not purely $m$-multilinear. Therefore, $\mM$ is not purely $m$-multilinear for any $m>1$. 
\end{proof}

The following classification result is a direct consequence of
\Cref{thm:uniform_not_multilinear,thm: rank1_non_mul} and Propositions~\ref{prop: not_mul_m=2_cases}, \ref{prop: not_mul_9_34}, \ref{prop: not_mul_5_30}, and \ref{prop: not_mul_6_31}.

\begin{theorem}\label{thm: mul_class_F23_F24}
Let $E=\F_2^n$, with $n\in\{3,4\}$. Then the following hold:
\begin{enumerate}
    \item[(1)] For every $m>1$, there is no $q$-matroid on $E=\F_2^3$ that is purely $m$-multilinear.
    \item[(2)] For every $1<m<4$, there is no $q$-matroid on $E=\F_2^4$ that is purely $m$-multilinear.
\end{enumerate}
\end{theorem}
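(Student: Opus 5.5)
The plan is to reduce, by duality, to small ranks and then treat each rank using the results already established. By the theorem of Gorla et al.\ recalled in Section~\ref{sec:preliminaries}, $\mM[\C]^*=\mM[\C^\perp]$. Moreover, the dual of an $\F_{q^m}$-linear code is again $\F_{q^m}$-linear. Hence $\mM$ is purely $m$-multilinear if and only if $\mM^*$ is. So it suffices to treat $q$-matroids of rank $k\le \lfloor n/2\rfloor$, that is, $k\in\{0,1\}$ for $n=3$ and $k\in\{0,1,2\}$ for $n=4$.

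Rank $0$ is the trivial $q$-matroid $\mU_{0,n}(2)$. It is represented by the zero code, which is $\F_{2^m}$-linear for every $m$, so it is never purely multilinear. Rank $1$ is handled for every $m>1$ and every $n$ by \Cref{thm: rank1_non_mul}.

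For $n=3$ nothing else remains. This gives (1) for all $m>1$; note that rank $2$ on $\F_2^3$ is dual to rank $1$.

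For $n=4$ and rank $2$, I would go through the ten isomorphism classes of \Cref{table: F_2^4_rank2_qmatroids} for $1<m<4$:
\begin{itemize}
\item classes $(1,16)$ and $(2,24)$ are covered by \Cref{coro: not_mul_trivial_cases};
\item classes $(3,28)$, $(4,28)$ and $(7,32)$ are covered by \Cref{prop: not_mul_m=2_cases};
\item class $(5,30)$ is covered by \Cref{prop: not_mul_5_30};
\item class $(6,31)$ is covered by \Cref{prop: not_mul_6_31}, which rules out any multilinearity at all;
\item class $(8,33)$ is covered by \Cref{rem: not_mul_uniform_case}, citing \cite{gluesing2022q};
\item class $(9,34)$ is covered by \Cref{prop: not_mul_9_34};
\item class $(10,35)$ is uniform and is covered by \Cref{thm:uniform_not_multilinear}, since $m<4=n$.
\end{itemize}
Combining these cases gives (2).

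The only delicate point is justifying the duality reduction. One must check that pure $m$-multilinearity, and not merely $\F_q^{n\times m}$-representability, is preserved. This requires that $\mM$ is $\F_{q^m}$-representable if and only if $\mM^*$ is. That follows because the trace-dual of an $\F_{q^m}$-linear code, under a fixed basis identification, corresponds to the $\F_{q^m}$-dual code with respect to a dual basis, and this is again $\F_{q^m}$-linear.

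A second point is completeness of the case list. It relies on the count of exactly ten rank-two classes from \cite{IKS25} and the table from \cite{degenkuehne2026enumqmats}; I would cite these rather than reprove them.
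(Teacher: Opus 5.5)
Your proposal is correct and follows the paper's own argument. You reduce by duality to rank at most $\lfloor n/2\rfloor$, handle rank $0$ trivially and rank $1$ via \Cref{thm: rank1_non_mul}, and go through the ten rank-two classes on $\F_2^4$ using the results of \Cref{sec: rank2_classification}, including \Cref{coro: not_mul_trivial_cases} and \Cref{rem: not_mul_uniform_case}, which the paper's one-line proof leaves implicit; your dual-basis argument that $\F_{q^m}$-representability passes to the dual also fills in a step the paper only asserts.
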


\begin{remark}
The theorem shows that for $n=3$ there are no purely multilinear cases at all. For $n=4$, any possible purely multilinear example would have to come from a matrix rank-metric code in $\F_2^{4\times m}$ with $m\geq 4$. However, even the case $m=4$ is already computationally demanding, making an exhaustive computer search difficult. Thus, finding such an example, if it exists, will likely require a more theoretical construction.
\end{remark}
% \textcolor{red}{Heide and Ben already showed in their paper that the 33-bases one is not multilinearly representable and also the 31-bases}

%%%%%%%%%%%%%%%%%%%%%%%%%%%%%%%%%%%%%%%%%%%%%%%%%%%%%%%%%%%%%%%%%%%%%%%%%%%%%%%%%%%%%%%%%%

\section{Conclusion and open questions}\label{sec: conclusion}

In this paper, we introduced a $q$-analogue of multilinear
representability for $q$-matroids using matrix rank-metric codes. We also gave an equivalent description of the $q$-rank function of a representable $q$-polymatroid in terms of a generator tensor. Using this description, we showed that representability descends from a representable $q$-polymatroid to its projectivization polymatroid.
We then investigated pure multilinearity for several classes of $q$-matroids, including uniform $q$-matroids, almost uniform $q$-matroids, rank-one $q$-matroids, the non-Pappus $q$-matroid, and $q$-matroids on $\F_2^3$ and $\F_2^4$. In all cases considered, we found strong obstructions to pure $m$-multilinearity. More precisely, most of these $q$-matroids do not admit a purely $m$-multilinear representation for values of $m$ strictly smaller than the dimension of the ground space. For uniform $q$-matroids, rank-one $q$-matroids, $q$-matroids on $\F_2^3$, and some $q$-matroids on $\F_2^4$, this nonexistence extends to all $m>1$.
In the remaining cases, we determined necessary parameters for any possible purely $m$-multilinear representation. However, even in the smallest unresolved cases, an exhaustive computer search appears to be computationally difficult. Thus, new theoretical constructions or obstructions seem necessary.

We end with some open questions and possible directions for future research.

\begin{enumerate}
    \item[(a)] Does there exist a $q$-matroid that is purely $m$-multilinear for some $m>1$? Equivalently, does there exist an $m$-multilinear matrix rank-metric code whose associated $q$-matroid is not $\F_{q^m}$-representable?

    \item[(b)] Is the non-Pappus $q$-matroid purely $m$-multilinear for some
    $m\geq 9$?

    \item[(c)] Are almost uniform $q$-matroids ever purely multilinear? More
    precisely, for which parameters can an almost uniform $q$-matroid
    $\mA\mU_{k,n}(q,X)$ admit a matrix rank-metric representation with
    $m=n-1$?
\end{enumerate}

\bigskip

\section*{Acknowledgments}
%%%%%%%%%%%%%%%%%%%%%%%%%%%%%%%%%%%%%%%%%%%%%%%%%%%%%%%%%%%%%%%%%%%%%%%%%%%%%%%%%%%%%%%%%%%%%%%%%
The authors are thankful to Lukas K\"uhne for fruitful discussions. The first author is supported by the grant ANR-24-CPJ1-0075-01. The second author is supported by the Deutsche Forschungsgemeinschaft (DFG, German Research Foundation) through grants SFB-TRR 358/1 2023 -- 491392403 and SPP 2458 -- 539866293. The second author would especially like to thank Lukas K\"uhne for his supervision and guidance throughout the research process.

\bigskip

\bibliographystyle{abbrv}
\bibliography{references}
\end{document}